\documentclass[12pt]{amsart}
\usepackage{a4wide,enumerate,xcolor}
\usepackage{amsmath,graphicx,comment,enumerate}
\usepackage{mathtools} 
\allowdisplaybreaks

\usepackage{enumitem}
\setlist[itemize]{label={$\bullet$}, leftmargin=36pt, itemsep=3pt}

\let\pa\partial
\let\na\nabla
\let\eps\varepsilon

\newcommand{\R}{{\mathbb R}}
\newcommand{\diver}{\operatorname{div}}

\newtheorem{theorem}{Theorem}
\newtheorem{lemma}[theorem]{Lemma}

\newtheorem{definition}{Definition}

\begin{document}

\title[Weak--strong uniqueness for cross-diffusion systems]{
Improved weak--strong uniqueness \\
for general cross-diffusion systems}

\author[N. Geltner]{Noah Geltner}
\address{Institute of Analysis and Scientific Computing, TU Wien, Wiedner Hauptstra\ss e 8--10, 1040 Wien, Austria}
\email{noah.geltner@tuwien.ac.at} 

\author[M. Heitzinger]{Maria Heitzinger}
\address{Institute of Analysis and Scientific Computing, TU Wien, Wiedner Hauptstra\ss e 8--10, 1040 Wien, Austria}
\email{maria.heitzinger@tuwien.ac.at} 

\author[A. J\"ungel]{Ansgar J\"ungel}
\address{Institute of Analysis and Scientific Computing, TU Wien, Wiedner Hauptstra\ss e 8--10, 1040 Wien, Austria}
\email{juengel@tuwien.ac.at} 

\date{\today}

\thanks{The authors acknowledge partial support from the Austrian Science Fund (FWF), grant 10.55776/PAT2687825, and from the Austrian Federal Ministry for Women, Science and Research and implemented by \"OAD, project MULT09/2025. This work has received funding from the European Research Council (ERC) under the European Union's Horizon 2020 research and innovation programme, ERC Advanced Grant NEUROMORPH, no.~101018153. For open-access purposes, the authors have applied a CC BY public copyright license to any author-accepted manuscript version arising from this submission.} 

\begin{abstract}
The weak--strong uniqueness of bounded solutions is established for a broad class of cross-diffusion systems, with or without volume-filling effects, and with or without full coercivity. Compared to existing results, the regularity and positivity assumptions on the strong solution are significantly relaxed. The analysis also extends to non-coercive systems, with the volume-filling constraint compensating for the lack of coercivity. The proof is based on two key ideas: The positivity condition is eliminated through the construction of a {\em glued entropy}, while the regularity requirement can be relaxed by using the gradient estimate and the Gagliardo--Nirenberg inequality.
\end{abstract}

\keywords{Cross-diffusion systems, glued entropy, relative entropy method, hypocoercivity, volume filling.}  
 
\subjclass[2000]{35A02, 35K51, 35K55, 35Q92.}

\maketitle


\section{Introduction}

Cross-diffusion systems arise in numerous applications, including population dynamics, semiconductor devices, and multiphase flow, where the evolution of multiple interacting species is governed by diffusion mechanisms that depend on the concentrations of all components. The strong coupling and the lack of positive definiteness of the diffusion matrix makes the analysis very challenging. Among the fundamental questions is the uniqueness of weak solutions, which is typically out of reach for nonlinear cross-diffusion systems. A powerful alternative is provided by the weak--strong uniqueness principle, asserting that a weak solution coincides with a strong solution emanating from the same initial data as long as the latter exists. Such results are commonly established by means of relative entropy methods, which exploit the underlying entropy structure of the system to quantify the distance between two solutions.

In this work, we establish improved weak--strong uniqueness results for a broad class of cross-diffusion systems possessing an entropy structure and admitting bounded weak solutions. Compared to previous contributions, we substantially weaken the regularity assumptions on the strong solution and extend the class of admissible mobility matrices by allowing the positive definiteness condition to be replaced by weaker hypocoercive assumptions.

\subsection{Model equations and definitions}

The dynamics of the densities or volume fractions $u_1,\ldots,u_N$ of a multicomponent system are given by the cross-diffusion equations
\begin{align}\label{1.eq}
  \pa_t u_i = \diver\bigg(\sum_{j=1}^N A_{ij}(u)\na u_j\bigg) + r_i(u)
  \quad\mbox{in }\Omega,\ t>0, 
\end{align}
with the initial and no-flux boundary conditions
\begin{align}\label{1.bic}
  u_i(0)=u_i^0\mbox{ in }\Omega, \quad
  \sum_{j=1}^N A_{ij}(u)\na u_j\cdot\nu=0\mbox{ on }\pa\Omega,\ t>0,
  \ i=1,\ldots,N,
\end{align}
where $\Omega\subset\R^d$ ($d\ge 1$) is a bounded domain, $\nu$ is the exterior unit normal vector of $\pa\Omega$, and $u=(u_1,\ldots,u_N)$ is the solution vector with nonnegative components. Applications include fluid mixtures in which $u_1,\ldots,u_n$ denote the volume fractions of the mixture constituents, while $u_{n+1},\ldots,u_N$ represent the concentrations of chemical species or other quantities influencing the mixture. 

The aim of this work is to prove the weak--strong uniqueness property for \eqref{1.eq}--\eqref{1.bic}. For this, we assume that the cross-diffusion system has a particular entropy structure and that the diffusion matrix is non-degenerate. More precisely, we suppose that the entropy density associated to the cross-diffusion system is assumed to be given by 
\begin{align}\label{1.h}
  h(u) = \sum_{i=1}^n h_{i}(u_i)
  + \frac{1}{2}\sum_{i,j=n+1}^N b_{ij}u_i u_j,
 \end{align}
where $(b_{ij})\in\R^{(N-n)\times(N-n)}$ is positive definite and
\begin{align*}
  h_{i}(u_i) = \begin{cases}
  u_i(\log u_i-1) & \mbox{if }m_i=1, \\
  \displaystyle\frac{u_i^{m_i}-u_i}{m_i(m_i-1)}
  &\mbox{if }1<m_i<2.
  \end{cases}
\end{align*} 
The first term in \eqref{1.h} is the Boltzmann entropy density ($m_i=1$) or Tsallis entropy density ($m_i>1$), while the second term is the Rao entropy density, which describes the mixing of the species. Although some of our results apply to general strongly convex functions (see Theorem \ref{thm.wsu4} below), we focus on the class of entropies \eqref{1.h} to simplify the presentation and computations. System \eqref{1.eq}--\eqref{1.bic} has an entropy structure if $h''(u)A(u)$ is positive semidefinite, where $h''(u)$ is the Hessian of $h$. This condition is natural in many systems from physics and biology; see \cite[Chap.~4]{Jue16} and Section \ref{sec.ex}. 

The diffusion coefficients are supposed to be given by
\begin{align*}
  A_{ij}(u) = a_i(u)\delta_{ij} + a_{ij}(u), \quad i,j=1,\ldots,N,
\end{align*}
where $a_i(u)$ is strongly positive, both $a_i$ and $a_{ij}$ are Lipschitz continuous, and $a_{ij}(u)$ behaves like $u_i^{2-m_i}$. The latter condition removes singularities coming from the product $h_i''(u)a_{ij}(u)$. In general, we require more than just positive semidefiniteness of $h''(u)A(u)$ to derive suitable gradient bounds. We consider here the case that $h''(u)A(u)$ is positive definite or it is positive definite up to some ``defect''.

In the literature, the strong solution $v=(v_1,\ldots,v_n)$ is usually assumed to be strictly positive and to satisfy the regularity condition $v_i\in L^\infty(0,T;W^{1,\infty}(\Omega))$ \cite{ChJu19,HeJu26} or $v_i\in C^{0,1}(\overline{\Omega}\times[0,T])$ \cite{HoBu22}. The novel contributions of this paper are twofold:
\begin{itemize}
\item The strong solution $v_i$ only needs to be nonnegative and to satisfy $v_i\in L^2(0,T;$ $W^{1,p}(\Omega))$ with $p>d$.
\item The matrix $h''(u)A(u)$ may be not positive definite but it fulfills a suitable hypocoercivity condition.
\end{itemize}

We specify our notion of weak and strong solution to \eqref{1.eq}--\eqref{1.bic}. Let $T>0$ and set $\Omega_T:=\Omega\times(0,T)$. Let $\mathcal{D}$ be a bounded convex domain. 

\begin{definition}[Bounded weak solution]\label{def.weak}
We call $u=(u_1,\ldots,u_N)$ a {\rm bounded weak solution} to \eqref{1.eq}--\eqref{1.bic} if $u(x,t)\in\overline{\mathcal{D}}$ for a.e.\ $(x,t)\in\Omega_T$,
\begin{align*}
  u_i\in L^2(0,T;H^1(\Omega)), \quad \pa_t u_i\in L^2(0,T;H^1(\Omega)'),
\end{align*}
and it holds for all $\phi_i\in L^2(0,T;H^{1}(\Omega))$ and $i=1,\ldots,n$ that
\begin{align*}
  \int_0^T\langle\pa_t u_i,\phi_i\rangle dt
  + \sum_{j=1}^N\int_0^T\int_\Omega A_{ij}(u)\na u_j\cdot\na\phi_i dxdt
  = \int_0^T\int_\Omega r_i(u)\phi_i dxdt.
\end{align*}
\end{definition}

The domain can be chosen as $\mathcal D = (0,L)^N$ for some $L>0$ (non-volume-filling case) or $\mathcal D = \{(u_1,\ldots,u_n)\in(0,1)^n :\sum_{i=1}^nu_i=1\}\times(0,L)^{N-n}$ (volume-filling case). It follows from the boundedness of the domain $\mathcal{D}$ that $u_i\in L^\infty(\Omega_T)$. In particular, it holds that $A_{ij}(u)\na u_j\in L^2(\Omega_T)$ such that the weak formulation of $u_i$ is well-defined. The existence of global-in-time bounded weak solutions to \eqref{1.eq}--\eqref{1.bic} can be shown for various models using the boundedness-by-entropy technique of \cite{BDPS10,Jue15}; see Section \ref{sec.ex} for details. The boundedness condition simplifies the technical proofs, since no growth conditions on the nonlinearities are required. The proof of the weak--strong uniqueness property can be shown without this condition for certain models using the cut-off technique of \cite[Sec.~3.2]{Fis17}, applied to cross-diffusion systems in \cite{ChJu19}. 

\begin{definition}[Strong solution]\label{def.str} 
A function $v=(v_1,\ldots,v_n)$ is called a {\em strong solution} to \eqref{1.eq}--\eqref{1.bic} if it is a bounded weak solution and satisfies
\begin{align*}
  \na v_i\in L^2(0,T;L^p(\Omega))\quad\mbox{for }i=1,\ldots,n,\ p>d.
\end{align*}
\end{definition}

The existence of global-in-time strong solutions to \eqref{1.eq}--\eqref{1.bic} is very delicate, and usually only the local-in-time existence can be proved, using the technique of Amann \cite{Ama93}. In fact, there exists a unique maximal classical solution $u$ to \eqref{1.eq}--\eqref{1.bic} on some time interval $[0,T^*)$ if the initial data satisfies $u^0\in W^{1,p}(\Omega;\R^n)$ for $p>d$ and the diffusion matrix $A(u)$ is positively stable, i.e., it possesses only eigenvalues with positive real part for all $u\in\mathcal{D}$. The latter condition is generally fulfilled for cross-diffusion systems with entropy structure \cite[Theorem 6]{ChJu21}; interestingly, the former condition corresponds to the regularity we impose on the strong solution. 

\subsection{Main results}

We impose the following general assumptions:
\begin{itemize}
\item[(A1)] Domains: $\Omega\subset\R^d$ ($d\ge 1$) with Lipschitz boundary $\pa\Omega$ and $\mathcal{D}\subset\R^N$ ($N\ge 1$) are bounded domains.
\item[(A2)] Initial data: $u^0=(u_1^0,\ldots,u_N^0)\in L^1(\Omega;\R^N)$ satisfies $u^0(x)\in\overline{\mathcal{D}}$ for a.e.\ $x\in\Omega$.
\item[(A3)] Diffusion matrix: $A_{ij}(u)=a_i(u)\delta_{ij}+a_{ij}(u)$, where $a_i$, $a_{ij}$ are Lipschitz continuous on $\overline{\mathcal{D}}$, and there exist $c_A>0$, $C_A>0$ such that $a_i(u)\ge c_A>0$ for $i=1,\ldots,n$, $a_i=0$ for $i=n+1,\ldots,N$, and $|a_{ij}(u)|\le C_A u_i^{2-m_i}$ for $u\in\overline{\mathcal{D}}$ and $i=1,\ldots,n$, $j=1,\ldots,N$, where $1\le m_i<2$.
\item[(A4)] Reaction rate: $r\in C^{0}(\overline{\mathcal{D}};\R^N)$ is bounded and Lipschitz continuous. 
\end{itemize}

We assume that the image of the weak solution to \eqref{1.eq}--\eqref{1.bic} is a subset of $\overline{\mathcal D}$, giving the boundedness of $u_i$. The strict positivity of $a_i(u)$ represent the non-degeneracy condition that we mentioned before. The condition $1\le m_i<2$ is needed to obtain the strong convexity of the glued entropy density uniformly for $u_i$ close to zero; see Lemma \ref{lem.convex} below.

We consider three different situations (see Table \ref{table} for a summary): 
\begin{itemize}
\item cross-diffusion systems without volume filling (which means that all unknowns $u_i$ are interpreted as densities);
\item cross-diffusion systems with volume filling in the first $n$ variables ($u_1,\ldots,u_n$ are volume fractions satisfying $\sum_{i=1}^nu_i=1$);
\item cross-diffusion systems with a lack of coercivity in the variable $u_1$ (corresponding to the volume fraction of water or the void, being diffusion-free).
\end{itemize}
The different situations are distinguished by the assumptions on the matrix $h''(u)A(u)$ discussed next.

\subsection*{Systems without volume filling}

Recall definition \eqref{1.h} of the entropy density $h$. In addition to Assumptions (A1)--(A4), we suppose that the matrix $h''(u)A(u)$ is positive definite:

\begin{itemize}
\item[(A5)$_1$] Coercivity: There exists $\lambda>0$ such that 
\begin{align*}
  z^T h''(u)A(u)z \ge \lambda|z|^2 \quad\mbox{for all }z\in\R^N,\
  u\in\mathcal{D}.
\end{align*} 
\end{itemize}

\begin{theorem}[Weak--strong uniqueness I]\label{thm.wsu1}
Let Assumptions (A1)--(A4) and (A5)$_1$ hold. Let $u$ be a bounded weak solution and $v$ be a strong solution to \eqref{1.eq}--\eqref{1.bic} in the sense of Definitions \ref{def.weak} and \ref{def.str}, respectively. If the initial conditions for $u$ and $v$ coincide then $u(t)=v(t)$ in $\Omega$ for $t>0$.  
\end{theorem}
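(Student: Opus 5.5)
We use the relative entropy method, replacing $h$ by a \emph{glued entropy} so that no positivity of $v$ is needed. Fix $\eps\in(0,1)$. For $i\le n$ define $h_i^\eps$ to equal $h_i$ for $s\ge\eps$ and, for $s<\eps$, the quadratic Taylor extension of $h_i$ at $\eps$:
\begin{align*}
  h_i^\eps(s)=h_i(\eps)+h_i'(\eps)(s-\eps)+\tfrac12 h_i''(\eps)(s-\eps)^2 .
\end{align*}
Then $h_i^\eps\in C^{1,1}$, $(h_i^\eps)''(s)=h_i''(\max\{s,\eps\})$, and $h_i^\eps$ is strongly convex on $[0,L]$. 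This is Lemma \ref{lem.convex}; it uses $1\le m_i<2$, since $h_i''(s)=s^{m_i-2}$ is bounded below.

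Let $h^\eps$ be the corresponding modification of $h$, with the Rao part unchanged, and set
\begin{align*}
  H^\eps(u|v)=\int_\Omega\big(h^\eps(u)-h^\eps(v)-(h^\eps)'(v)\cdot(u-v)\big)dx \ge c\|u-v\|_{L^2}^2 .
\end{align*}
Since $(h^\eps)'(v)$ is bounded and Lipschitz in $v$, it holds that $(h^\eps)'(v)\in L^2(0,T;H^1)$. Hence the weak formulations of $u$ and $v$ can be tested with $(h^\eps)'(u)-(h^\eps)'(v)$ and with $u-v$ (after a chain rule, using the Lipschitz regularity of $h^{\eps\prime}$). This gives
\begin{align*}
  \frac{d}{dt}H^\eps(u|v)=-\int_\Omega\big(\na (h^\eps)'(u)-\na(h^\eps)'(v)\big):\big(A(u)\na u-A(v)\na v\big)dx+\ldots ,
\end{align*}
where the dots denote lower-order terms from the Hessian and the reactions.

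Next rewrite the diffusion term as $-\int(\na u-\na v)^T(h^\eps)''(u)A(u)(\na u-\na v)\,dx$ plus error terms. The first piece gives, by (A5)$_1$, a dissipation $\lambda\|\na(u-v)\|_{L^2}^2$. On $\{u_i\ge\eps\}$ we have $(h^\eps)''=h''$ and (A5)$_1$ applies directly. On $\{u_i<\eps\}$ we use $(h^\eps)''\ge h''(\eps)$ together with (A3): the product $h_i''a_{ij}$ stays bounded because $|a_{ij}|\le C_Au_i^{2-m_i}$. The coercivity defect there is of order $C\eps^{2-m_i}$, which is absorbed since $a_i\ge c_A$. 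The error terms have the form
\begin{align*}
  \int_\Omega \big[(h^\eps)''(u)A(u)-(h^\eps)''(v)A(v)\big]\na v\cdot\na(u-v)\,dx
\end{align*}
together with analogous terms. Their coefficients are bounded by $C|u-v|$ uniformly in $\eps$, again because $u_i^{2-m_i}$ cancels the singularity of $h_i''$. The Lipschitz bounds on $a_i,a_{ij}$ and on $s^{m_i-2}\cdot s^{2-m_i}$-type products give this. The reaction terms are bounded by $C\|u-v\|_{L^2}^2$ by (A4).

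The main obstacle is the error term $\int|u-v||\na v||\na(u-v)|\,dx$ with only $\na v\in L^2(0,T;L^p)$, $p>d$. We apply H\"older's inequality with exponents $p$, $q=2p/(p-2)$ and $2$. Then the Gagliardo--Nirenberg inequality gives
\begin{align*}
  \|u-v\|_{L^q}\le C\|u-v\|_{L^2}^{1-\theta}\|u-v\|_{H^1}^{\theta}, \qquad \theta=d/p<1 .
\end{align*}
Young's inequality then yields
\begin{align*}
  \tfrac{\lambda}{2}\|\na(u-v)\|_{L^2}^2+C\big(1+\|\na v\|_{L^p}^{2/(1-\theta)}\big)\|u-v\|_{L^2}^2 .
\end{align*}
The exponent $2/(1-\theta)$ exceeds $2$, which is not integrable for $\na v\in L^2_tL^p_x$. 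To fix this we also use the $L^\infty$ bound $|u-v|\le C$, interpolating $\|u-v\|_{L^q}\le C\|u-v\|_{L^2}^{2/q}$, and balance the dissipation so that only $\|\na v\|_{L^p}^2$ appears; this is where the gradient estimate is used. If the balancing fails, we would localize in time, using the $L^1$-in-time integrability of $\|\na v\|_{L^p}^2$.

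Once the estimate
\begin{align*}
  \frac{d}{dt}H^\eps(u|v)\le C\big(1+\|\na v\|_{L^p}^2\big)H^\eps(u|v)
\end{align*}
holds with $C$ independent of $\eps$, and $H^\eps(u|v)(0)=0$, Gronwall's lemma gives $H^\eps(u|v)(t)=0$. Therefore $u=v$.
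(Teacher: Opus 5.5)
Your strategy is the paper's: glued entropy, coercivity of $h_\eps''(u)A(u)$ where $u_i$ is small from $a_i\ge c_A$ and $h_{\eps,i}''|a_{ij}|\le C$, relative entropy, then Gronwall. But the proposal has two genuine gaps and does not close.

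\emph{First, the Gronwall step.} Your diagnosis is right: H\"older, Gagliardo--Nirenberg with $\theta=d/p$, and Young give
\begin{align*}
  \int_\Omega|u-v|^2|\na v|^2dx \le \delta\|\na(u-v)\|_{L^2(\Omega)}^2
  + C_\delta\big(1+\|\na v\|_{L^p(\Omega)}^{2p/(p-d)}\big)\|u-v\|_{L^2(\Omega)}^2,
\end{align*}
and $2p/(p-d)>2$. Neither of your remedies works.
\begin{itemize}
\item The bound $\|u-v\|_{L^q}^2\le C\|u-v\|_{L^2}^{2-4/p}$ obtained from $|u-v|\le C$ only yields $\frac{d}{dt}H^\eps\le C(1+\|\na v\|_{L^p}^2)(H^\eps)^{1-2/p}$. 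A sublinear inequality with zero initial value does not force $H^\eps\equiv0$: Osgood's condition fails, and $y'=y^{1-2/p}$ has the nontrivial solution $y=(2t/p)^{p/2}$.
\item No ``balancing'' can put $\|\na v\|_{L^p}^2$ in front of $\|u-v\|_{L^2}^2$. The target inequality is invariant under $u-v\mapsto s(u-v)$, so an $L^\infty$ bound is irrelevant to it. Testing it with $u-v$ a bump of width $\ell\sim\|\na v\|_{L^p}^{-p/(p-d)}$, with $|\na v|$ concentrated on the same ball, shows the constant must grow like $\|\na v\|_{L^p}^{2p/(p-d)}$.
\item Localizing in time does not change this power.
\end{itemize}
The paper's \eqref{2.GN} states exactly the bound with $\|\na v\|_{L^p}^2$, but its last (Young) step has the same defect: it silently drops the exponent $1/(1-\theta)$. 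So the paper does not supply the missing idea either. What the argument, yours or the paper's, actually proves is the theorem under $\na v\in L^{2p/(p-d)}(0,T;L^p(\Omega))$, for instance $v\in L^\infty(0,T;W^{1,p}(\Omega))$ as given by Amann's theory. With only $\na v\in L^2(0,T;L^p(\Omega))$ the estimate does not close.

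\emph{Second, your glued entropy is too rough.} The quadratic extension gives $(h_i^\eps)''(s)=h_i''(\max\{s,\eps\})$, which is only Lipschitz: $(h_i^\eps)'''$ jumps from $0$ to $(m_i-2)\eps^{m_i-3}$ at $s=\eps$. The relative entropy identity contains the term $\int_\Omega[(h^\eps)''(u)-(h^\eps)''(v)-(h^\eps)'''(v)(u-v)]\na v\cdot A(u)\na v\,dx$, which is the paper's $I_2$. This term is quadratic in $\na v$, so its coefficient must be $O(|u-v|^2)$, not the $O(|u-v|)$ you assert for all error terms. Where $v_i<\eps<u_i$ (or vice versa), the Taylor remainder is genuinely of order $|u_i-v_i|$. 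That leaves a contribution $\int_\Omega|u-v||\na v|^2dx$, linear in $u-v$, which neither the dissipation nor Gronwall can absorb. The paper avoids this by gluing smoothly through a partition of unity in \eqref{2.heps}, so that $h_\eps\in C^4$ (Assumption (B1)) and the second-order Taylor bound holds.

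Two smaller points. $\eps$ must be fixed small, not arbitrary in $(0,1)$, for the coercivity absorption to work. Your error coefficients are also not uniform in $\eps$, since the Lipschitz constant of $(h^\eps)''$ is of order $\eps^{m_i-3}$; this is harmless only because $\eps$ is fixed.
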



\subsection*{Systems with volume filling}

Volume filling means that the first $n$ variables satisfy the constraint $\sum_{i=1}^n u_i=1$. In this situation, we can relax the positive definiteness of $h''(u)A(u)$:

\begin{itemize}
\item[(A5)$_2$] Hypocoercivity: There exist $\kappa\ge 0$ and $\lambda>0$ such that 
\begin{align*}
  z^T h''(u)A(u)z \ge \lambda|z|^2 
  - \kappa\bigg(\sum_{i=1}^n z_i\bigg)^2
  \quad\mbox{for all }z\in\R^N,\ u\in\mathcal{D}
\end{align*}
and $\sum_{i=1}^n A_{ij}(u)=0$ for all $j=1,\ldots,n$, $u\in\mathcal D$.
\item[(A6)$_2$] Reaction rate: It holds that $\sum_{i=1}^n r_i(u)=0$ for all $u\in\mathcal{D}$.
\end{itemize}

\begin{theorem}[Weak--strong uniqueness II]\label{thm.wsu2}
Let Assumptions (A1)--(A4), (A5)$_2$, and (A6)$_2$ hold. Let $u$ be a bounded weak solution and $v$ be a strong solution to \eqref{1.eq}--\eqref{1.bic} in the sense of Definitions \ref{def.weak} and \ref{def.str}, respectively, satisfying $\sum_{i=1}^n u_i=\sum_{i=1}^n v_i=1$ in $\Omega_T$. If the initial conditions for $u$ and $v$ coincide then $u(t)=v(t)$ in $\Omega$ for $t>0$.  
\end{theorem}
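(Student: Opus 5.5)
We follow the relative entropy approach used for Theorem \ref{thm.wsu1} and use the volume-filling constraint to absorb the defect term in (A5)$_2$. Since $v$ need not be strictly positive, $h_i'(v_i)$ may be singular (for $m_i=1$). We therefore replace $h$ by a glued entropy density $h^\eps$. For each $i\le n$ we take $h_i$ for $u_i\ge\eps$ and glue it at $\eps$ to a quadratic extension, matching value and first derivative. By Lemma \ref{lem.convex}, the condition $1\le m_i<2$ makes $h^\eps$ strongly convex on $\overline{\mathcal D}$ with a constant that is uniform in $\eps$. We then work with the relative entropy
\[
H^\eps(u|v)=\int_\Omega\big(h^\eps(u)-h^\eps(v)-(h^\eps)'(v)\cdot(u-v)\big)dx \ \ge\ c\|u-v\|_{L^2}^2 .
\]

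\textbf{Step 1: time derivative.} Using the weak formulations with test functions $(h^\eps)'(u)-(h^\eps)'(v)$ (justified by a chain rule in $H^1$, since $h^\eps$ is $C^{1,1}$), we write
\[
\frac{d}{dt}H^\eps = -\int_\Omega \big(\na u-\na v\big)^T h''(\cdot)A(u)(\na u-\na v)\,dx + R_1 + R_2 + R_3 .
\]
Here $R_1$ collects the terms with $A(u)-A(v)$ multiplied by $\na v$, $R_2$ collects the errors from replacing $h''$ by $(h^\eps)''$ on $\{u_i<\eps\}$ or $\{v_i<\eps\}$, and $R_3$ contains the reactions. The growth condition $|a_{ij}|\le C_Au_i^{2-m_i}$ keeps $(h^\eps)''A$ bounded uniformly in $\eps$. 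We expect the terms in $R_2$ to be $O(\eps^{\alpha})$ times integrable quantities, because $\na u,\na v\in L^2$ and $u,v$ are bounded.

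\textbf{Step 2: hypocoercivity.} We apply (A5)$_2$ with $z=\na(u-v)$. Since $\sum_{i=1}^n u_i=\sum_{i=1}^n v_i=1$, we have $\sum_{i=1}^n\na(u_i-v_i)=0$, so the defect term $\kappa(\sum_i z_i)^2$ vanishes identically. The dissipation is therefore bounded below by $\lambda\|\na(u-v)\|_{L^2}^2$. The condition $\sum_iA_{ij}=0$ and (A6)$_2$ ensure that the constraint is consistent with both weak formulations. The gluing interacts with the $(h'')$-weighted form only on the small sets, and this is accounted for in $R_2$.

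\textbf{Step 3: bounding $R_1$, the main obstacle.} By the Lipschitz continuity of $A$ and $r$,
\[
|R_1|\le C\int_\Omega|u-v|\,|\na(u-v)|\,|\na v|\,dx\le C\|\na v\|_{L^p}\|u-v\|_{L^{q}}\|\na(u-v)\|_{L^2},
\]
where $1/q=1/2-1/p$. The Gagliardo--Nirenberg inequality gives
\[
\|w\|_{L^q}\le C\|w\|_{L^2}^{1-\theta}\|w\|_{H^1}^{\theta}, \qquad \theta=d/p<1 ,
\]
and this is where $p>d$ is used. Young's inequality then yields
\[
|R_1|\le \tfrac{\lambda}{2}\|\na(u-v)\|_{L^2}^2 + C\big(1+\|\na v\|_{L^p}^{2/(1-\theta)}\big)\|u-v\|_{L^2}^2 .
\]
However, $\na v\in L^2(0,T;L^p)$ only, so the exponent $2/(1-\theta)>2$ destroys integrability in time.

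To fix this we would split $|u-v|\,|\na v|$ differently. One option is to interpolate so that $\|\na v\|_{L^p}$ appears exactly squared, using $\|u-v\|_{L^\infty}\le C$ (boundedness) to trade the missing power of $\|u-v\|$. Concretely, we bound $|u-v|\le C|u-v|^{s}$ for some $s<1$ and choose the exponents so that Gronwall's inequality is applied with a weight in $L^1(0,T)$. This yields an Osgood/Gronwall-type inequality
\[
\frac{d}{dt}H^\eps\le C\big(1+\|\na v\|_{L^p}^2\big)\,\omega\big(H^\eps\big)+C\eps^\alpha .
\]
If a power loss appears, we would handle it with an Osgood modulus (for instance $\omega(s)=s\log(1/s)$) or by absorbing via the bounded $L^\infty$ norm. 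Gronwall's inequality with $H^\eps(0)=0$ then gives $H^\eps(t)\le C\eps^\alpha$. Letting $\eps\to0$ and using the uniform strong convexity, we conclude $u=v$.
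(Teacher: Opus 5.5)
You have two genuine gaps. The second one also exposes a flaw in the paper's own proof.

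The first gap is in how you use the glued entropy. You keep the original $h''$ in the dissipation so that (A5)$_2$ applies, and you move the discrepancy into $R_2$, which you expect to be $O(\eps^\alpha)$. This fails. The identity for $H^\eps$ produces the quadratic form of $h_\eps''(u)A(u)$. On $\{u_i<\eps\}$ its difference from $h''(u)A(u)$ contains $(u_i^{m_i-2}-\eps^{m_i-2})a_i(u)|\nabla(u_i-v_i)|^2$. This term has the unfavourable sign and is not small. For $m_i=1$ it is essentially $\int_{\{u_i<\eps\}}|\nabla(u_i-v_i)|^2/u_i$, which is not even known to be finite for a weak solution lying only in $L^2(0,T;H^1(\Omega))$.

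The missing idea is Lemma \ref{lem.posdef}: for one \emph{fixed} small $\eps$, the glued form is itself coercive, $z^Th_\eps''(u)A(u)z\ge c|z|^2$ whenever $\sum_{i\le n}z_i=0$. On $\{u_i\le 2\eps\}$ the diagonal entry satisfies $h_{\eps,i}''a_i\ge c_A(2\eps)^{m_i-2}$, which is large, while $h_{\eps,i}''|a_{ij}|\le C$ by the growth condition $|a_{ij}|\le C_Au_i^{2-m_i}$. The defect $\kappa(\sum_{i\in S_\eps,i\le n}z_i)^2$ reduces by the constraint to $\kappa(\sum_{i\le n,\,u_i\le 2\eps}z_i)^2$ and is absorbed. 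So $h_\eps$ is a genuine entropy, the relative-entropy identity holds exactly, there is no $R_2$, and $\eps$ is never sent to $0$.

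Once you have this lemma, your direct expansion around $-\int\nabla(u-v)^Th_\eps''(u)A(u)\nabla(u-v)\,dx$ is a legitimate alternative to the paper's route through $B=Ah''^{-1}$. Because $\sum_{i\le n}\nabla(u_i-v_i)=0$, it even lets you skip the reduction to $\bar u$ (Lemmas \ref{lem.bar}--\ref{lem.posdef2}). This expansion needs $h_\eps''$ and $h_\eps'''$ to be Lipschitz, so that the remainders are $O(|u-v||\nabla(u-v)||\nabla v|+|u-v|^2|\nabla v|^2)$. A $C^{1,1}$ quadratic gluing is therefore not enough; use the smooth gluing \eqref{2.heps}.

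Your Step 3 objection is correct, and it applies verbatim to the paper's estimate \eqref{2.GN}. Young's inequality turns $\|\nabla(u-v)\|_{L^2}^{2\theta}\|u-v\|_{L^2}^{2(1-\theta)}\|\nabla v\|_{L^p}^2$ into $\delta\|\nabla(u-v)\|_{L^2}^2+C(\delta)\|\nabla v\|_{L^p}^{2/(1-\theta)}\|u-v\|_{L^2}^2$, with $2/(1-\theta)=2p/(p-d)$. It does not give the factor $C(1+\|\nabla v\|_{L^p}^2)$ claimed there. Indeed, replacing $\nabla v$ by $\mu\nabla v$ with $\mu\to\infty$ in the claimed inequality would give $\|w\|_{L^q}\le C\|w\|_{L^2}$ on $H^1(\Omega)$, which is false for $q>2$.

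Your proposed repair does not close the gap. The same scaling shows that no bound $\delta\|\nabla w\|_{L^2}^2+C(1+\|\nabla v\|_{L^p}^2)\,\omega(\|w\|_{L^2}^2)$ with an Osgood modulus such as $s\log(1/s)$ can hold, even for $|w|\le 1$. It would force $\|w\|_{L^q}^2\le C\omega(\|w\|_{L^2}^2)$, which fails for height-one bumps on small balls. Trading powers of $|u-v|$ for the $L^\infty$ bound only yields $\frac{d}{dt}H\le a(t)H^\sigma$ with $\sigma<1$, and this does not force $H\equiv 0$ from $H(0)=0$. The additive $C\eps^\alpha$ is, moreover, an artifact of the uncontrolled $R_2$.

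The workable fix is to strengthen the hypothesis to the Serrin-type condition $\nabla v\in L^{2p/(p-d)}(0,T;L^p(\Omega))$. An example is $\nabla v\in L^\infty(0,T;L^p(\Omega))$, which Amann's local theory provides on compact subintervals. Then the Gronwall weight $C(1+\|\nabla v\|_{L^p}^{2p/(p-d)})$ lies in $L^1(0,T)$. The linear Gronwall argument at fixed $\eps$ gives $H^\eps\equiv 0$, and strong convexity yields $u=v$. With only $\nabla v\in L^2(0,T;L^p(\Omega))$, neither your sketch nor the paper's argument as written proves the statement.
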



\subsection*{Systems with lack of coercivity in $u_1$}

When the variable $u_1$ represents water or space vacancies, there is no diffusion in that variable. Then $h''(u)A(u)$ cannot be coercive in $u_1$. Therefore, certain Lipschitz continuity conditions on the first reaction rate $r_1$ and the diffusion matrix are needed. Thus, we suppose the following assumptions:

\begin{itemize}
\item[(A5)$_3$] Hypocoercivity: There exist $\kappa\ge 0$ and $\lambda>0$ such that 
\begin{align*}
  z^T h''(u)A(u)z \ge \lambda\sum_{i=2}^N z_i^2
  - \kappa\bigg(\sum_{i=1}^n z_i\bigg)^2
  \quad\mbox{for all }z\in\R^N,\ u\in\mathcal{D}.
\end{align*}
Moreover, $u\mapsto u_1^{m_1-2}A_{1j}(u)$ is Lipschitz continuous in $\overline{\mathcal{D}}$ and $\sum_{i=1}^n A_{ij}(u)=0$ for all $j=1,\ldots,n$.
\item[(A6)$_3$] Reaction rate: It holds that $\sum_{i=1}^n r_i(u)=0$, $r_1(u)=0$ if $u_1=0$, and there exists $C>0$ such that for $u$, $v\in\mathcal{D}$,
\begin{align*}
  (r_1(u)-r_1(v))(h_1'(u_1)-h_1'(v_1))
  \le C\sum_{i=1}^N(u_i-v_i)^2.
\end{align*}
\end{itemize}

The identity $\sum_{i=1}^n r_i(u)=0$ is a compatibility condition to achieve mass conservation for the first $n$ variables. Indeed, summing \eqref{1.eq} over $i=1,\ldots,n$ and invoking both the volume-filling constraint as well as the assumption $\sum_{i=1}^n A_{ij}(u)=0$ from Assumption (A5)$_2$ shows that the condition $\sum_{i=1}^n r_i(u)=0$ is necessary. The condition $r_1(u)=0$ if $u_1=0$ ensures the nonnegativity of this variable. Finally, the inequality in Assumption (A6) is a natural requirement to estimate the reaction terms; see, e.g., Hypothesis (H3) in \cite{Jue15}.

\begin{theorem}[Weak--strong uniqueness III]\label{thm.wsu3}
Let Assumptions (A1)--(A4), (A5)$_3$, and (A6)$_3$ hold. Let $u$ be a bounded weak solution and $v$ be a strong solution to \eqref{1.eq}--\eqref{1.bic} in the sense of Definitions \ref{def.weak} and \ref{def.str}, respectively. If the initial conditions for $u$ and $v$ coincide then $u(t)=v(t)$ in $\Omega$ for $t>0$.
\end{theorem}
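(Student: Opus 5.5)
We argue by the relative entropy method, using a glued entropy density $h_\eps$ in place of $h$. We take $h_\eps$ to agree with $h$ away from zero in the first $n$ variables and to be replaced, for $u_i<\eps$, by a quadratic extension that is $C^1$ at $u_i=\eps$. Its derivative $h_\eps'$ is then Lipschitz and $h_\eps$ is uniformly strongly convex. For the Rao part we keep the quadratic form. We set
\begin{align*}
  H_\eps(u|v)=\int_\Omega\big(h_\eps(u)-h_\eps(v)-h_\eps'(v)\cdot(u-v)\big)dx .
\end{align*}
By Lemma \ref{lem.convex} (strong convexity uniformly near zero, using $1\le m_1<2$), $H_\eps(u|v)\ge c\|u-v\|_{L^2}^2$. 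In the volume-filling setting, $\sum_{i\le n}(u_i-v_i)=0$ and the sums of the weak formulations over $i\le n$ vanish by (A5)$_3$ and (A6)$_3$.

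\textbf{Step 1: the entropy identity.} We compute $\frac{d}{dt}H_\eps(u|v)$ by testing the equation for $u$ with $h_\eps'(u)-h_\eps'(v)$ and the equation for $v$ with $h_\eps''(v)(u-v)$. Both are admissible in $L^2(H^1)$ because $h_\eps'$ is Lipschitz and $u,v$ are bounded. The diffusion part is rewritten as
\begin{align*}
  -\int_\Omega(\na u-\na v)^T h''(u)A(u)(\na u-\na v)dx
\end{align*}
plus error terms. Two sources produce these errors. The first is the mismatch $h_\eps''\neq h''$ on $\{u_i<\eps\}$ or $\{v_i<\eps\}$. The second is $(A(u)-A(v))\na v$ and $(h_\eps''(u)-h_\eps''(v))A(v)\na v$, which are linear in $\na v$ and of the form $|u-v|\,|\na v|\,|\na(u-v)|$.

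\textbf{Step 2: use hypocoercivity.} Under (A5)$_3$ the $\kappa$-term is harmless: $\sum_{i\le n}\na(u_i-v_i)=0$ kills $(\sum_{i\le n}z_i)^2$ when $z=\na(u-v)$. We then get the dissipation $\lambda\sum_{i\ge2}\|\na(u_i-v_i)\|_{L^2}^2$ and no control on $\na(u_1-v_1)$. The variable $u_1$ is recovered through the constraint: $\na(u_1-v_1)=-\sum_{i=2}^n\na(u_i-v_i)$. Every error term containing $\na(u_1-v_1)$ is handled by this substitution. The Lipschitz continuity of $u\mapsto u_1^{m_1-2}A_{1j}(u)$ controls the singular factor $h_1''A_{1j}$ in the cross terms, so that $|h_1''(u)A_{1j}(u)-h_1''(v)A_{1j}(v)|\le C|u-v|$ uniformly in $\eps$. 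The reaction terms are bounded by (A6)$_3$ for $i=1$ and by the Lipschitz continuity (A4) for $i\ge2$, yielding $C\|u-v\|_{L^2}^2\le CH_\eps$. The condition $r_1=0$ at $u_1=0$ handles the glued region.

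\textbf{Step 3: the low-regularity error (main obstacle).} We bound
\begin{align*}
  \int_\Omega|u-v||\na v||\na(u-v)|dx\le\|\na v\|_{L^p}\|u-v\|_{L^{2p/(p-2)}}\|\na(u-v)\|_{L^2}.
\end{align*}
By Gagliardo--Nirenberg with $\theta=d/p<1$,
\begin{align*}
  \|w\|_{L^{2p/(p-2)}}\le C\|w\|_{L^2}^{1-\theta}\|w\|_{H^1}^{\theta}.
\end{align*}
Young's inequality then absorbs the gradient into the dissipation and leaves $C(1+\|\na v\|_{L^p}^{2/(1-\theta)})\|u-v\|_{L^2}^2$.

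Two difficulties arise here, and we expect this to be the hardest step. First, the exponent $2/(1-\theta)$ exceeds $2$, so it is not integrable in time under $\na v\in L^2(L^p)$. We would first try to exploit the $L^\infty$ bound on $u-v$ to interpolate only partially: splitting $|u-v|^2\le C|u-v|^{2-s}$ lowers the power of $\|\na v\|_{L^p}$ to $2$, at the cost of an Osgood-type rather than Gronwall-type inequality. Second, the $u_1$-gradient is not in the dissipation, and it is replaced via the constraint as in Step 2.

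The remaining errors from the gluing are supported where $u_i<\eps$ or $v_i<\eps$. We plan to show they are $o(1)$ as $\eps\to0$ via dominated convergence, using $|a_{ij}|\le C u_i^{2-m_i}$ to bound $h_\eps''a_{ij}$.

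\textbf{Step 4: conclusion.} Gronwall's lemma, or Osgood's lemma if the partial interpolation is needed, applied with $H_\eps(u|v)(0)=0$ gives $H_\eps(u|v)(t)\le o(1)$. Letting $\eps\to0$ and using the strong convexity bound $H_\eps(u|v)\ge c\|u-v\|_{L^2}^2$ yields $u=v$.
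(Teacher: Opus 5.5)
\textbf{The $\eps\to0$ strategy.} Your overall framework matches the paper: a relative entropy built on a glued density, elimination of $u_1$ through the constraint, and Gagliardo--Nirenberg. The decisive gap is your plan in Steps 1, 3 and 4. You keep the dissipation in terms of the original Hessian $h''$, treat $h_\eps''\neq h''$ as an error, and let $\eps\to0$ at the end. This fails for two reasons.

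First, the mismatch terms live on $\{u_i<2\eps\}\cup\{v_i<2\eps\}$ and carry factors $u_i^{m_i-2}$ or $v_i^{m_i-2}$ multiplying products of $\na u$ and $\na v$. There is no $\eps$-independent integrable majorant for them. Definitions~\ref{def.weak} and~\ref{def.str} give no entropy-dissipation bounds, and terms like $u_i^{m_i-2}|\na v_i|^2$ are not controlled at all.

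Second, and worse, the remaining errors are not of the form $C|u-v||\na v||\na(u-v)|$ with $C$ independent of $\eps$. The map $s\mapsto s^{m_i-2}$ is not Lipschitz at $0$, and the Lipschitz constant of $h_{\eps,i}''$ is of order $\eps^{m_i-3}$. Gronwall therefore gives at best $H_\eps(t)\le o(1)\exp(C_\eps\int_0^t g\,ds)$ with $C_\eps\to\infty$, which need not tend to zero.

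\textbf{The missing idea.} No limit in $\eps$ is needed. Identity \eqref{3.I59} (in reduced form, \eqref{3.I1015}) holds for any smooth strongly convex density. The lemma in Section~\ref{sec.glued2} shows that, for one fixed small $\eps$, the glued density itself satisfies $z^Th_\eps''(u)A(u)z\ge c\sum_{i\ge2}z_i^2$ whenever $\sum_{i\le n}z_i=0$. Its proof splits indices according to $u_i>2\eps$ or $u_i\le2\eps$. It absorbs all cross terms with the large diagonal entries $h_{\eps,i}''a_i\ge c_A\gamma$ on $\{u_i\le2\eps\}$, using $h_{\eps,i}''|a_{ij}|\le C$. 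The argument then runs with $h_\eps$ itself, all ($\eps$-dependent) constants are finite, and Gronwall gives $\bar h_\eps(u|v)\equiv0$ directly.

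\textbf{The component $u_1$ and the gluing itself.} The paper deliberately leaves $u_1$ unglued. The Lipschitz condition on $u_1^{m_1-2}A_{11}$ in (A5)$_3$, together with $|a_{11}|\le C_Au_1^{2-m_1}$, forces $a_1\le Cu_1^{2-m_1}$. So the absorption mechanism is unavailable for $i=1$. Instead, the singular part $u_1^{m_1-2}Q$ of $\bar h_\eps''$ is handled through the Lipschitz continuity of the maps \eqref{3.map}, together with the test function $h_1'(u_1+\eta)$, $\eta\to0$. That step of the paper explicitly uses $v_1\ge c>0$ and $\na v_1^{m_1-1}\in L^2(0,T;L^p)$, and your proposal offers no substitute for it. Your $C^1$ quadratic extension is also too rough: $h_\eps''$ jumps at $\eps$. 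Then $h_\eps''(v)(u-v)$ is not an admissible test function, and the Taylor bounds for the $I_2$-type terms fail. The paper glues with a smooth partition of unity.

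\textbf{Time integrability.} Your diagnosis in Step 3 is correct. After Gagliardo--Nirenberg, Young's inequality produces $\|\na v\|_{L^p}^{2/(1-\theta)}=\|\na v\|_{L^p}^{2p/(p-d)}$, whereas \eqref{2.GN} records the exponent $2$. Your remedy, however, does not work. Using the $L^\infty$ bound gives $\int_\Omega|u-v|^2|\na v|^2dx\le C\|\na v\|_{L^p}^2\|u-v\|_{L^2}^{2-4/p}$, i.e.\ $y'\le g(t)\,y^{1-2/p}$ with $g=C\|\na v\|_{L^p}^2$. Since $\int_0^1 dr/r^{\alpha}<\infty$ for $\alpha<1$, this is not an Osgood modulus, and $y(0)=0$ does not force $y\equiv0$. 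Within the H\"older--Gagliardo--Nirenberg--Young scheme, keeping the inequality linear in $\|u-v\|_{L^2}^2$ while absorbing the gradient forces the exponent $2p/(p-d)$. Step 4 therefore closes only if $\na v\in L^{2p/(p-d)}(0,T;L^p(\Omega))$, for instance $\na v\in L^\infty(0,T;L^p(\Omega))$. It does not close under $\na v\in L^2(0,T;L^p(\Omega))$ alone.
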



\begin{table}[ht]
\caption{Overview of various models for which the weak--strong uniqueness property for bounded weak solutions can be proved; see Section \ref{sec.ex}. The improvement consists in relaxing the regularity assumptions on the strong solution. A uniqueness result for the granular segregation model under a smallness condition was shown in \cite{GJV03}; see Section \ref{sec.granu}.}
\begin{tabular}{|l|p{55mm}|l|}
\hline 
Model & Weak--strong uniqueness & Thm. \\ \hline 
Regularized Busenberg--Travis & improves \cite{LaMa23} & 1 \\
SKT population & improves \cite{ChJu19} & 1 \\
Semiconductors with electron--hole scattering & new & 1 \\
Chemotaxis with additional cross-diffusion & new & 1 \\
Granular segregation & new & 2 \\
Maxwell--Stefan & improves \cite{HJT22} & 2 \\
Solar-cell thin-film & improves \cite{HoBu22} & 2 \\
Multiphase flow & improves \cite{HeJu26} & 2 \\
Chemotaxis multiphase flow & new & 3 \\ \hline
\end{tabular}
\label{table}
\end{table}


\subsection{Key ideas of the proofs}

The proofs of Theorems \ref{thm.wsu1}--\ref{thm.wsu3} are based on the relative entropy method, using the relative entropy density
\begin{align*}
  h(u|v) = h(u) - h(v) - h'(v)\cdot(u-v) \quad\mbox{for }u,v\in 
  \mathcal{D},
\end{align*}
recalling definition \eqref{1.h} of the entropy density $h$. To illustrate the idea, we first suppose that Assumption (A5)$_1$ holds. The time derivative along two positive smooth solutions becomes, after a computation detailed in Theorem \ref{thm.wsu4},
\begin{align}\label{1.I15}
  & \frac{d}{dt}\int_\Omega h(u|v)dx = I_1+\cdots+I_5, 
  \quad\mbox{where} \\
  & I_1 = -\sum_{i,j=1}^N\int_\Omega B_{ij}(u)
  \na\bigg(\frac{\pa h}{\pa u_i}(u)-\frac{\pa h}{\pa u_i}(v)\bigg)
  \cdot\na\bigg(\frac{\pa h}{\pa u_j}(u)-\frac{\pa h}{\pa u_j}(v)
  \bigg)dx, \nonumber \\
  & I_2 = -\sum_{i,j=1}^N\int_\Omega B_{ij}(v)
  \na\bigg(\frac{\pa h}{\pa u_i}(u) - \frac{\pa h}{\pa u_i}(v)
  - \sum_{k=1}^N\frac{\pa^2 h}{\pa u_i\pa u_k}(v)(u_k-v_k)\bigg)
  \cdot\na\frac{\pa h}{\pa u_j}(v)dx, \nonumber \\
  & I_3 = -\sum_{i,j=1}^N\int_\Omega(B_{ij}(u)-B_{ij}(v))
  \na\bigg(\frac{\pa h}{\pa u_i}(u) - \frac{\pa h}{\pa u_i}(v)\bigg)
  \cdot\na\frac{\pa h}{\pa u_j}(v)dx, \nonumber \\
  & I_4 = \sum_{i=1}^N\int_\Omega(r_i(u)-r_i(v))
  \bigg(\frac{\pa h}{\pa u_i}(u) - \frac{\pa h}{\pa u_i}(v)\bigg)dx, 
  \nonumber \\
  & I_5 = \sum_{i=1}^N\int_\Omega r_i(v)
  \bigg(\frac{\pa h}{\pa u_i}(u) - \frac{\pa h}{\pa u_i}(v)
  - \sum_{k=1}^N\frac{\pa^2 h}{\pa u_i\pa u_k}(v)(u_k-v_k)\bigg)dx,
  \nonumber 
\end{align}
and the mobility matrix $B(u) := A(u)h''(u)^{-1}$ is positive definite, since $h''(u)A(u)$ is positive definite. Therefore, after some calculations, we can estimate the first term $I_1$ as
\begin{align*}
  I_1 \le -\frac{\lambda}{2}\int_\Omega|\na(u-v)|^2 dx
  + C\int_\Omega|u-v|^2|\na v|^2 dx,
\end{align*}
where $\lambda>0$ comes from Assumption (A5)$_1$ and $C>0$ is some constant. We use Taylor expansion to bound $I_2$, while $I_3$ is estimated by using the Lipschitz continuity of $h''(u)$, leading after an estimation to
\begin{align*}
  I_2 + I_3 \le \frac{\lambda}{4}\int_\Omega|\na(u-v)|^2 dx
  + C(\lambda)\int_\Omega|u-v|^2|\na v|^2 dx.
\end{align*}
Finally, using Lipschitz continuity of the reaction rates, we obtain
\begin{align*}
  I_4 + I_5 \le C\int_\Omega|u-v|^2 dx. 
\end{align*}
Collecting these estimates, we conclude that
\begin{align*}
  \frac{d}{dt}\int_\Omega h(u|v)dx 
  \le -\frac{\lambda}{4}\int_\Omega|\na(u-v)|^2 dx
  + C\int_\Omega|u-v|^2 dx + C(\lambda)\int_\Omega|u-v|^2|\na v|^2 dx.
\end{align*}
Usually, the last term is estimated in terms of the $L^2(\Omega)$ difference $u-v$ assuming that $|\na v|$ is bounded. To refine this argument, we use the Gagliardo--Nirenberg inequality as in \cite{DeZa26} to obtain (see \eqref{2.GN} below)
\begin{align*}
  \int_\Omega|u-v|^2|\na v|^2 dx
  \le \frac{\lambda}{4}\int_\Omega|\na(u-v)|^2 dx
  + C(\lambda)\big(1+\|\na v\|_{L^p(\Omega)}^2\big)\int_\Omega|u-v|^2dx,
\end{align*}
where $p>d$. Thus, if $|\na v|$ is bounded in $L^p(\Omega)$, 
\begin{align*}
  \frac{d}{dt}\int_\Omega h(u|v)dx 
  \le C(\lambda,\na v)\int_\Omega|u-v|^2dx.
\end{align*}
Then, if $h(u|v)\ge C|u-v|^2$, we can apply Gronwall's inequality to conclude that $h(u(t)|v(t))$ $\le h(u(0)|v(0))=0$, implying that $u(t)=v(t)$ in $\Omega$ for $t>0$.

The inequality $h(u|v)\ge C|u-v|^2$ follows from the convexity of the entropy density if $v_i$ is strongly positive. Indeed, the derivative $(\pa^2 h_i/\pa u_i\pa u_j)(v)$ behaves like $v_i^{m_i-2}$ for $i=1,\ldots,n$, which diverges as $v_i\to 0$. To relax the positivity condition, we introduce a regularized entropy, the so-called ``glued entropy'' $h_\eps(u)$ for $\eps>0$. It is defined by $h''_{\eps,i}(u)=\eps^{m_i-2}$ if $u_i<\eps$, $h_{\eps,i}''(u)=h''(u)$ if $u_i>2\eps$, and smooth else, where $i=1,\ldots,n$; see Section \ref{sec.glued} for the precise definition. This construction was first suggested in \cite{BRZ22} to prove the partial H\"older regularity for solutions to \eqref{1.eq}--\eqref{1.bic}. Its use for weak--strong uniqueness problems is new. The construction is simple yet nontrivial, as it avoids the regularization of the diffusion matrix required in \cite[Sec.~2.3]{CDJ18} while preserving positive definiteness. 

The previous arguments hold if $h''(u)A(u)$ is positive definite to estimate $I_1$. This covers not only the situation of Theorem \ref{thm.wsu1} but also of Theorem \ref{thm.wsu2}. Indeed,  Assumption (A5)$_2$ implies that $h''(u)A(u)$ is positive definite on the subspace $\{z\in\R^N:\sum_{i=1}^n z_i=0\}$, and $\pa u/\pa x_j$ is an element of that space, thanks to the volume-filling constraint $\sum_{i=1}^n\na u_i=0$. In fact, we can remove the variable $u_1$ by using $u_1 = 1-\sum_{i=2}^n u_i$ and work with the remaining $N-1$ variables. Therefore, we can unify the proofs of Theorem \ref{thm.wsu1} (for $N$ variables) and Theorem \ref{thm.wsu2} (for $N-1$ variables); see the proof of Theorem \ref{thm.wsu4}. 

The situation of Theorem \ref{thm.wsu3} is different, since we do not have coercivity in the variable $u_1$. However, we may again express $u_1 = 1-\sum_{i=2}^n u_i$ in terms of the remaining $N-1$ variables. Still, the lack of coercivity poses some technical difficulties, requiring careful estimations.

The paper is organized as follows. Section \ref{sec.thm12} is devoted to the construction of the glued entropy and the proofs of Theorems \ref{thm.wsu1} and \ref{thm.wsu2}, whereas the proof of Theorem \ref{thm.wsu3} is presented in Section \ref{sec.thm3}. In Section \ref{sec.ex}, we present several examples of cross-diffusion systems to which Theorems \ref{thm.wsu1}--\ref{thm.wsu3} apply.


\section{Proof of Theorems \ref{thm.wsu1} and \ref{thm.wsu2}}
\label{sec.thm12}

We first detail the construction and some properties of the glued entropy, explain the reduction from $N$ to $N-1$ variables in the situation of Theorem \ref{thm.wsu2}, and then prove a weak--strong uniqueness result that includes both Theorems \ref{thm.wsu1} and \ref{thm.wsu2}. 

\subsection{Construction of the glued entropies}\label{sec.glued}

We regularize the entropy density \eqref{1.h} in such a way that the second derivative is bounded on $\overline{\mathcal{D}}$. Let $\eps>0$. The regularized entropy density is defined by
\begin{align*}
  h_\eps(u) = \sum_{i=1}^n h_{\eps,i}(u_i) + \frac12\sum_{i,j=n+1}^N
  b_{ij}u_iu_j,
\end{align*}
where the {\em glued entropy} is given by \cite[Sec.~3.2]{BRZ22}
\begin{align}\label{2.heps}
  h_{\eps,i}(u_i) = \int_0^{u_i}\int_0^z h_i''(\eps\eta_\eps^1(y)
  + y\eta_\eps^2(y))dydz,
\end{align}
where $(\eta_\eps^1,\eta_\eps^2)$ is a partition of unity of $\R$ such that $0\le\eta_\eps^j\le 1$ in $\R$, $\eta_\eps^1=1$ in $(-\infty,\eps)$, $\eta_\eps^2=1$ in $(2\eps,\infty)$, and $\sum_{j=1}^2\eta_\eps^j=1$ in $\R$. In particular, we have (see Figure \ref{fig.h2})
\begin{align}\label{2.hpp}
  h_{\eps,i}''(u_i) = \begin{cases}
  \eps^{m_i-2} & \mbox{if }0\le u_i < \eps, \\
  \mbox{smooth, decreasing} &\mbox{if }\eps\le u_i\le 2\eps, \\
  u_i^{m_i-2} &\mbox{if }u_i>2\eps.
  \end{cases}
\end{align}
The Hessian of $h_\eps(u)$ can be written as the block matrix
\begin{align*}
  h_\eps''(u) = \begin{pmatrix}
  \operatorname{diag}(h_{\eps,1}''(u_1),\ldots,h_{\eps,n}''(u_n))
  & 0 \\ 0 & b_{ij} \end{pmatrix}.
\end{align*}

\begin{figure}
    \centering
    \includegraphics[width=75mm]{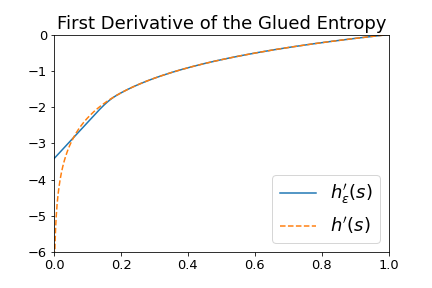}
    \includegraphics[width=75mm]{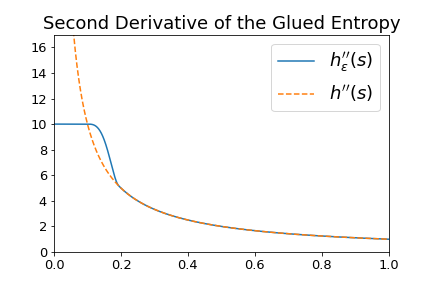}
    \caption{First (left) and second (right) derivative $h_\eps''$ of the glued entropy $h_\eps$ associated to $h(u) = u(\log u-1)$, where $\eps=0.1$.}
    \label{fig.h2}
\end{figure}

The glued entropy has two important properties: It is strongly convex, and $h_\eps''(u)A(u)$ is positive definite uniformly in $u\in\overline{\mathcal{D}}$. In the following, these properties are proved.

\begin{lemma}\label{lem.convex}
The Hessian of the glued entropy density $h_\eps$, defined in \eqref{2.heps}, is strongly convex, i.e., there exists $c>0$ such that $z^Th_\eps''(u)z\ge c|z|^2$ for all $z\in\R^N$ and $u\in\overline{\mathcal{D}}$. 
\end{lemma}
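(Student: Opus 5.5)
Since the Hessian $h_\eps''(u)$ is block diagonal, with a diagonal upper-left block $\operatorname{diag}(h_{\eps,1}''(u_1),\ldots,h_{\eps,n}''(u_n))$ and the constant lower-right block $(b_{ij})_{i,j=n+1}^N$, the plan is to bound each block from below separately and take $c$ as the minimum of the two resulting constants. For the lower block, positive definiteness of $(b_{ij})$ gives $\sum_{i,j=n+1}^N b_{ij}z_iz_j\ge \beta\sum_{i=n+1}^N z_i^2$, where $\beta>0$ is its smallest eigenvalue. This does not depend on $u$, so it is immediate.

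For the diagonal block, it suffices to show that each scalar function $h_{\eps,i}''$ is bounded below by a positive constant on the bounded interval of admissible values of $u_i$. Boundedness of $\mathcal D$ (Assumption (A1)) gives some $L>0$ with $0\le u_i\le L$ for all $u\in\overline{\mathcal D}$. By \eqref{2.heps},
\[
  h_{\eps,i}''(u_i)=h_i''\big(\eps\eta_\eps^1(u_i)+u_i\eta_\eps^2(u_i)\big),
\]
and $h_i''(s)=s^{m_i-2}$ for both the Boltzmann and the Tsallis case. The key observation is that the argument $s(u_i):=\eps\eta_\eps^1(u_i)+u_i\eta_\eps^2(u_i)$ is a convex combination of $\eps$ and $u_i$, because $\eta_\eps^1+\eta_\eps^2=1$ with both weights in $[0,1]$. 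It also equals $\eps$ whenever $u_i<\eps$. In each regime we therefore get $s(u_i)\le \max\{\eps,u_i\}\le \max\{\eps,L\}$:
\begin{itemize}
\item for $u_i<\eps$, $s=\eps$;
\item for $\eps\le u_i\le 2\eps$, $s$ lies between $\eps$ and $u_i$;
\item for $u_i>2\eps$, $s=u_i\le L$.
\end{itemize}
Since $m_i<2$, the map $s\mapsto s^{m_i-2}$ is decreasing on $(0,\infty)$. Hence
\[
  h_{\eps,i}''(u_i)\ge \max\{\eps,L\}^{m_i-2}>0 .
\]
The same convex-combination argument gives $s(u_i)\ge \min\{\eps,u_i\}$, which keeps $s$ away from $0$ when $u_i\ge\eps$ and equal to $\eps$ otherwise. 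So $s(u_i)\ge\eps>0$ throughout, and $h_i''$ is evaluated only where it is finite. This confirms that $h_{\eps,i}$ is well defined and bounded above as well, although only the lower bound is needed here.

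Combining the two blocks yields $z^Th_\eps''(u)z\ge c|z|^2$ with
\[
  c=\min\Big\{\beta,\ \min_{1\le i\le n}\max\{\eps,L\}^{m_i-2}\Big\},
\]
which is uniform in $u\in\overline{\mathcal D}$. The constant depends on $\eps$ and $L$, and this dependence is harmless for the lemma as stated.

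The only step that needs care is the transition region $\eps\le u_i\le 2\eps$, where the explicit formula \eqref{2.hpp} only records that $h_{\eps,i}''$ is ``smooth, decreasing''. I would handle it, as above, through the convex-combination representation of the argument, rather than relying on the qualitative description in \eqref{2.hpp}. Here the restriction $m_i<2$, not $m_i\le 2$, is used: it makes $h_i''$ decreasing, so that the upper bound on $s$ gives a positive lower bound on $h_i''(s)$.
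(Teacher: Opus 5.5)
Your proof is correct and follows the paper's route: split $z^Th_\eps''(u)z$ into the diagonal glued part and the $(b_{ij})$ block, then bound each $h_{\eps,i}''$ below uniformly in $u$. Your convex-combination bound $\eps\le \eps\eta_\eps^1(u_i)+u_i\eta_\eps^2(u_i)\le\max\{\eps,L\}$ makes explicit what the paper leaves implicit: the paper only records the near-zero bound $(2\eps_\gamma)^{m_i-2}\ge\gamma$, which is what Lemma~\ref{lem.posdef} later needs, and covers the transition region only through the monotonicity asserted in \eqref{2.hpp}. One small correction to your closing remark: for this lemma with $\eps>0$ fixed, $m_i<2$ is not actually needed, since $s\mapsto s^{m_i-2}$ is continuous and positive on the compact interval $[\eps,\max\{\eps,L\}]$; the strict inequality matters only for making $(2\eps)^{m_i-2}$ large as $\eps\to0$ in Lemma~\ref{lem.posdef}.
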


\begin{proof}
We compute for $z\in\R^N$ and $u\in\overline{\mathcal{D}}$:
\begin{align*}
  z^Th''_\eps(u)z = \sum_{i=1}^n h_{i,\eps}''(u_i)z_i^2
  + \sum_{i,j=n+1}^N b_{ij}z_iz_j.
\end{align*}
By construction of the glued entropy density and because of $m_i<2$, we can find for any $\gamma>0$ a value $\eps_\gamma>0$ such that
\begin{align}\label{2.hkappa}
  \inf_{0<u_i\le 2\eps_\gamma}|h''_{i,\eps}(u_i)|
  \ge (2\eps_\gamma)^{m_i-2}\ge \gamma.
\end{align}
Then the result follows from the positive definiteness of $(b_{ij})$. 
\end{proof}

\begin{lemma}\label{lem.posdef}
Let $h_\eps$ be the glued entropy density defined in \eqref{2.heps}. There exists $c>0$ such that
\begin{align*}
  z^T h_\eps''(u)A(u)z \ge c|z|^2 \quad\mbox{for }z\in\R^N,\  u\in\overline{\mathcal{D}}.
\end{align*}
In the volume-filling setting $\sum_{i=1}^n u_i=1$, this inequality holds for all $z\in\R^N$ satisfying $\sum_{i=1}^n z_i=0$. 
\end{lemma}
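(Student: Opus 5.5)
The plan is to compare $h_\eps''(u)A(u)$ with $h''(u)A(u)$ and split according to the size of each $u_i$. Both Hessians are block diagonal. They differ only in the diagonal entries $i\le n$ with $u_i<2\eps$. Write $h_\eps''(u)=h''(u)D(u)$ with $D(u)=\operatorname{diag}(d_1,\ldots,d_n,1,\ldots,1)$, where $d_i=h_{\eps,i}''(u_i)/h_i''(u_i)\in(0,1]$. We have $d_i=1$ for $u_i>2\eps$, and $d_i=(u_i/\eps)^{2-m_i}$ for $u_i<\eps$; in the transition region $d_i$ is comparable to this. Then
\begin{align*}
  z^Th_\eps''(u)A(u)z=\sum_{i=1}^n h_{\eps,i}''(u_i)a_i(u)z_i^2
  +\sum_{i=1}^n\sum_{j=1}^N h_{\eps,i}''(u_i)a_{ij}(u)z_iz_j
  +\sum_{i,j=n+1}^N b_{ij}z_i(A(u)z)_j,
\end{align*}
using $a_i=0$ for $i>n$. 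In the first term, Lemma \ref{lem.convex} and (A3) give $h_{\eps,i}''(u_i)a_i(u)\ge c_A\gamma$, where $\gamma$ can be made large by choosing $\eps$ small, by \eqref{2.hkappa}.

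The key step is to treat the rows $i\le n$ with $u_i$ small as perturbations. For $u_i<2\eps$, assumption (A3) gives $|h_{\eps,i}''(u_i)a_{ij}(u)|\le C\eps^{m_i-2}u_i^{2-m_i}\le C 2^{2-m_i}$, which is bounded uniformly in $\eps$. For $u_i>2\eps$, the entries $h_{\eps,i}''a_{ij}=h_i''a_{ij}$ coincide with those of $h''A$. So I would split the index set into $S=\{i\le n:u_i<2\eps\}$ and its complement $S^c$, and argue as follows.

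1. For rows in $S^c$ and the rows $i>n$, $h_\eps''A$ coincides with $h''A$.
2. Rows in $S$ consist of a large diagonal part of size at least $c_A\gamma$ plus off-diagonal entries bounded by a constant independent of $\eps$.
3. Apply (A5)$_1$ to the vector $z$ restricted to $S^c$ and the indices $i>n$. The same reasoning applied to the principal submatrix of $h''A$ shows it is coercive with constant $\lambda$.
4. The cross terms consist of row-$S$ interactions, which are bounded by $C|z_S||z|$. There are also column-$S$ entries of rows in $S^c$, namely $h_i''a_{ij}$ and $b_{ij}A_{jk}$ with $k\in S$. These are bounded because $u$ lies in the bounded set $\overline{\mathcal D}$ and $u_i$ is bounded below by $2\eps$ for $i\in S^c$.

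The main obstacle is that last bound: $h_i''(u_i)\le(2\eps)^{m_i-2}$ grows as $\eps\to0$, so it competes with $\gamma$. I would handle it by noting that $a_{ik}$ with $k\in S$ could be bounded using Lipschitz continuity. More simply, I would rewrite the whole quadratic form in the mobility-like variable $w=h_\eps''(u)^{1/2}z$ to symmetrize. If this fails, I would instead fix $\eps$ first and use compactness. On the compact set $\overline{\mathcal D}$, the matrix $h_\eps''A$ is continuous and bounded, and it agrees with $h''A$ wherever all $u_i\ge2\eps$. It therefore suffices to show pointwise positive definiteness, followed by a continuity and compactness argument. Pointwise positive definiteness at a fixed $u$ follows from $h_\eps''A=h''D\,h''^{-1}\cdot h''A$: this is a congruence-like transform with the positive diagonal $D$ that preserves positivity of the symmetric part when $D$ is near the identity, or when the $S$-block is diagonally dominant, using the Young-inequality absorption above with $\gamma$ large.

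For the volume-filling case, I would run the same argument with (A5)$_2$ restricted to $z$ satisfying $\sum_{i\le n}z_i=0$, so that the $\kappa$ term vanishes. Equivalently, I would eliminate $z_1=-\sum_{i=2}^n z_i$ and apply the above to the resulting $(N-1)$-variable form. The absorption of the $S$-rows is unchanged, since it only uses (A3) and $|z_1|\le C\sum_{i\ge2}|z_i|$.
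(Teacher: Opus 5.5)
Your decomposition is the paper's: split off the indices $i\le n$ with $u_i$ small, apply (A5)$_1$ to $z$ restricted to the remaining indices, and absorb everything touching the small components into the large diagonal $c_A\gamma$. However, you stop at the step you call ``the main obstacle'', which the paper resolves with an observation you missed.

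\textbf{The missing observation.} The growth condition in (A3), $|a_{ij}(u)|\le C_Au_i^{2-m_i}$, is indexed by the \emph{row} $i$. So for every row $i\le n$ with $u_i>2\eps$ and every column $j$ in your $S$ one has
\[
h_i''(u_i)|a_{ij}(u)|=u_i^{m_i-2}|a_{ij}(u)|\le C_A,
\]
uniformly in $u$ and $\eps$. Here necessarily $j\neq i$, so $a_i$ does not enter. This is exactly how the paper bounds $J_3$.

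You used this cancellation only for rows with $u_i<2\eps$. For the other rows you fell back on $h_i''(u_i)\le(2\eps)^{m_i-2}$, and with that bound the absorption really fails. After Young's inequality with the fixed weight $\lambda/4$ on $z_i^2$, the coefficient of $z_j^2$, $j\in S$, is of order $\eps^{2(m_i-2)}$. The diagonal gain is only of order $\eps^{m_j-2}$. In the Boltzmann case $m_i=m_j=1$ this is $\eps^{-2}$ against $\eps^{-1}$.

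\textbf{Why your fallbacks do not close the gap.}
\begin{itemize}
\item Lipschitz continuity of $a_{ik}$ gives no smallness of $a_{ik}$ when $u_k$ is small. (A3) says nothing about the column variable.
\item Compactness at fixed $\eps$ only upgrades pointwise positive definiteness to a uniform constant. Pointwise positive definiteness where some $u_i$ is small or zero is the whole content of the lemma, and it need not hold unless $\eps$ is small, i.e.\ unless the $\gamma$-absorption works.
\item Writing $h_\eps''A=D\,h''A$ with a positive diagonal $D$ does not help. Left multiplication by a positive diagonal matrix does not preserve positivity of the symmetric part. For $P=\bigl(\begin{smallmatrix}1&2\\-2&1\end{smallmatrix}\bigr)$ one has $z^TPz=|z|^2$, but with $D=\operatorname{diag}(1,\delta)$ one gets $z^TDPz=z_1^2+2(1-\delta)z_1z_2+\delta z_2^2$, which is indefinite for small $\delta$. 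In your setting $D$ is far from the identity, since $d_i\to0$ as $u_i\to0$.
\item The ``diagonally dominant'' variant needs precisely the $\eps$-independent cross-term bound above.
\end{itemize}

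\textbf{A smaller error in the volume-filling case.} The vector obtained by zeroing the components in $S$ no longer satisfies $\sum_{i\le n}z_i=0$, so the $\kappa$-term does not vanish. You must use $\sum_{i\le n,\,i\notin S}z_i=-\sum_{i\in S}z_i$ to bound it by $\kappa n\sum_{i\in S}z_i^2$, and then absorb it by taking $\gamma$ large, as the paper does.
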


\begin{proof}
The proof is similar to \cite[Sec.~4]{BRZ22}, but since our entropy density is different, we detail the proof. The matrix $h''_\eps(u)A(u)$ can be written as the block matrix
\begin{align}\label{2.HepsA}
  h_\eps''(u)A(u) = \begin{pmatrix}
  Q^{11} & Q^{12} \\ Q^{21} & Q^{22} \end{pmatrix},
\end{align}
where the submatrices are given by 
\begin{align*}
  Q^{11}_{ij} &= (a_i(u)\delta_{ij} + a_{ij}(u))h''_{\eps,i}(u_i)
  &&\mbox{if }i,j=1,\ldots,n, \\ 
  Q^{12}_{ij} &= a_{ij}(u)h''_{\eps,i}(u_i) 
  &&\mbox{if }i=1,\ldots,n,\ j=n+1,\ldots,N, \\
  Q^{21}_{ij} &= \sum_{k=n+1}^N b_{ik}a_{kj}(u)
  &&\mbox{if }i=n+1,\ldots,N,\ j=1,\ldots,n, \\
  Q^{22}_{ij} &= \sum_{k=n+1}^N b_{ik}a_{kj}(u)
  &&\mbox{if }i,j=n+1,\ldots,N.
\end{align*}
It follows from \eqref{2.hkappa} that $h''_{\eps,i}(u_i)$ can be estimated for small values of $u_i$. For some fixed $u_i$, we introduce for $0<\eps<\eps_\gamma$ the set of indices
\begin{align*}
  S_\eps = \{i\in\{1,\ldots,n\}:u_i>2\eps\}\cup\{n+1,\ldots,N\}.
\end{align*}
On this set, we have $h''_{\eps,i}(u_i)=h_i''(u_i)$, since either $u_i>2\eps$ (so no regularization is active) or $i=n+1,\ldots,N$, where the entropy is quadratic and hence requires no regularization. Splitting the set $\{1,\ldots,N\}^2$ into the sets $S_\eps\times S_\eps$, $S_\eps\times S_\eps^c$, $S_\eps^c\times S_\eps$, and $S_\eps^c\times S_\eps^c$ and rearranging the sets, we find for $z\in\R^N$ that
\begin{align}\label{2.aux}
  z^T h''_\eps(u)A(u)z = \sum_{i,j=1}^N z_i(h''_\eps(u)A(u))_{ij}z_j
  = J_1 + J_2 + J_3 + J_4,
\end{align}
where 
\begin{align*}
  J_1 &= \sum_{i,j\in S_\eps} z_i h''_{\eps,i}(u_i)A_{ij}(u)z_j,
  && J_2 = \sum_{i=n+1}^N\sum_{\substack{j=1, \\ u_j\le 2\eps}}^n
  z_i(h_\eps''(u)A(u))_{ij}z_j, \\
  J_3 &= \sum_{\substack{i=1, \\ u_i > 2\eps}}^n
  \sum_{\substack{j=1, \\ u_j\le 2\eps}}^n
  z_ih''_{\eps,i}(u_i)A_{ij}(u)z_j, 
  && J_4 = \sum_{\substack{i=1, \\ u_i\le 2\eps}}^n\sum_{j=1}^N
  z_i (h''_\eps(u)A(u))_{ij}z_j,
\end{align*}
and the sum $J_4$ includes both cases $u_j>2\eps$ and $u_j\le 2\eps$. According to Assumption (A5)$_1$, the first sum is estimated according to 
\begin{align*}
  J_1 \ge\lambda\sum_{i\in S_\eps}z_i^2.
\end{align*}
We conclude from the representation \eqref{2.HepsA} that the second sum $J_2$ only consists of the values $b_{ik}$ and $a_{kj}(u)$, which are bounded. Hence, 
\begin{align*}
  J_2 \ge -C\sum_{i=n+1}^N\sum_{\substack{j=1, \\ u_j\le 2\eps}}^n
  |z_iz_j|
  \ge -\frac{\lambda}{4}\sum_{i=n+1}^N z_i^2
  - C(\lambda)\sum_{\substack{j=1, \\ u_j\le 2\eps}}^n z_j^2,
\end{align*}
where the last step follows from Young's inequality. For the third sum $J_3$, we use the estimate $h''_{\eps,i}(u_i)\le Cu_i^{m_i-2}$, which follows from \eqref{2.hpp}. Then, together with the condition $|a_{ij}(u)|\le C_A u_i^{2-m_i}$, we obtain $h_{\eps,i}''(u_i)|a_{ij}(u)|\le C$. Therefore, 
\begin{align*}
  J_3 &= \sum_{\substack{i=1, \\ u_i > 2\eps}}^n
  \sum_{\substack{j=1, \\ u_j\le 2\eps}}^n
   z_i h''_{i}(u_i)(a_i(u)\delta_{ij}+a_{ij}(u))z_j 
  = \sum_{\substack{i=1, \\ u_i > 2\eps}}^n
  \sum_{\substack{j=1, \\ u_j\le 2\eps}}^n h''_{i}(u_i)a_{ij}(u)z_iz_j \\
  &\ge -C\sum_{\substack{i=1, \\ u_i > 2\eps}}^n
  \sum_{\substack{j=1, \\ u_j\le 2\eps}}^n |z_iz_j|
  \ge -\frac{\lambda}{4}\sum_{\substack{i=1, \\ u_i > 2\eps}}^n z_i^2
  - C(\lambda)\sum_{\substack{j=1, \\ u_j\le 2\eps}}^n z_j^2.
\end{align*}
The explicit structure of $A_{ij}(u)$ yields two terms in $J_4$:
\begin{align*}
  J_4 = \sum_{\substack{i=1, \\ u_i\le 2\eps}}^n 
  h''_{\eps,i}(u_i)a_i(u)z_i^2
  + \sum_{\substack{i=1, \\ u_i\le 2\eps}}^n\sum_{j=1}^N
  z_i h_{\eps,i}''(u_i)a_{ij}(u)z_j.
\end{align*}
We infer from \eqref{2.hkappa} and Assumption (A3) that the first part can be bounded by
\begin{align*}
  \sum_{\substack{i=1, \\ u_i\le 2\eps}}^n 
  h''_{\eps,i}(u_i)a_i(u)z_i^2 
  \ge c_A\gamma \sum_{\substack{i=1, \\ u_i\le 2\eps}}^n z_i^2.
\end{align*}
For the second part of $J_4$, we estimate as in $J_3$ to conclude that
\begin{align*}
  J_4 \ge c_A\gamma \sum_{\substack{i=1, \\ u_i\le 2\eps}}^n z_i^2
  - C\sum_{\substack{i=1, \\ u_i\le 2\eps}}^n\sum_{j=1}^N |z_iz_j|
  \ge c_A\gamma \sum_{\substack{i=1, \\ u_i\le 2\eps}}^n z_i^2
  - \frac{\lambda}{4}\sum_{j=1}^N z_j^2
  - C(\lambda)\sum_{\substack{i=1, \\ u_i\le 2\eps}}^n z_i^2.
\end{align*}
Summarizing these estimates, we infer from \eqref{2.aux} that
\begin{align*}
   z^T h''_\eps(u)A(u)z
  \ge \lambda\sum_{i\in S_\eps}z_i^2
  + (c_A\gamma - C(\lambda))
  \sum_{\substack{i=1, \\ u_i\le 2\eps}}^n z_i^2
  - \frac{\lambda}{4}\bigg(
  \sum_{i=n+1}^N z_i^2 
  + \sum_{\substack{i=1, \\ u_i > 2\eps}}^n z_i^2
  + \sum_{i=1}^N z_i^2\bigg).
\end{align*}
We choose $\gamma>0$ sufficiently large (therefore $\eps_\gamma$ becomes small) such that $c_A\gamma - C(\lambda)\ge \lambda/2$. This gives
\begin{align*}
  z^T h''_\eps(u)A(u)z
  \ge \lambda\sum_{i\in S_\eps}z_i^2
  + \frac{\lambda}{2}\sum_{\substack{i=1, \\ u_i\le 2\eps}}^n z_i^2
  - \frac{\lambda}{4}\bigg(\sum_{i=n+1}^N z_i^2 
  + \sum_{\substack{i=1, \\ u_i > 2\eps}}^n z_i^2
  + \sum_{i\in S_\eps}z_i^2
  + \sum_{i\in S_\eps^c} z_i^2\bigg).
\end{align*}
As $S_\eps$ contains the indices $i=1,\ldots,n$ such that $u_i>2\eps$ or $i=n+1,\ldots,N$ and $S_\eps^c$ contains the indices $i=1,\ldots,n$ such that $u_i\le 2\eps$, we have
\begin{align*}
  z^T h''_\eps(u)A(u)z \ge \lambda\sum_{i\in S_\eps}z_i^2
  + \frac{\lambda}{4}\sum_{\substack{i=1, \\ u_i\le 2\eps}}^n z_i^2
  - \frac{\lambda}{2}\sum_{i\in S_\eps}z_i^2 
  \ge \frac{\lambda}{4}\sum_{i=1}^N z_i^2.
\end{align*}

Next, consider the volume-filling case. Let $z\in\R^N$ be such that $\sum_{i=1}^n z_i=0$. The estimation of $J_2$, $J_3$, and $J_4$ is as before, while Assumption (A5)$_2$ leads to
\begin{align*}
  J_1 &\ge \lambda\sum_{i\in S_\eps} z_i^2 
  - \kappa\bigg(\sum_{\substack{i=1, \\ u_i\in S_\eps}}^n z_i\bigg)^2
  = \lambda\sum_{i\in S_\eps} z_i^2 
  - \kappa\bigg(\sum_{\substack{i=1, \\ u_i\in S_\eps}}^n z_i\bigg)
  \bigg(\sum_{\substack{i=1, \\ u_i\in S_\eps}}^n z_i
  - \sum_{i=1}^n z_i\bigg) \\
  &= \lambda\sum_{i\in S_\eps} z_i^2 
  - \kappa\bigg(\sum_{\substack{i=1, \\ u_i\in S_\eps}}^n z_i\bigg)
  \bigg(\sum_{\substack{i=1, \\ u_i\le 2\eps}}^n z_i\bigg)
  \ge \lambda\sum_{i\in S_\eps} z_i^2 
  - \frac{\lambda}{4}\sum_{\substack{i=1, \\ u_i\in S_\eps}}^n z_i^2
  - C(\kappa,\lambda)\sum_{\substack{i=1, \\ u_i\le 2\eps}}^n z_i^2.
\end{align*}
We conclude that
\begin{align*}
  z^T h''_\eps(u)A(u)z &\ge \lambda\sum_{i\in S_\eps}z_i^2  
  - \big(c_A\gamma - C(\lambda) - C(\kappa,\lambda)\big)
  \sum_{\substack{i=1,\\  u_i\le 2\eps}}^n z_i^2 \\
  &\phantom{xx}- \frac{\lambda}{4}\bigg(\sum_{i=n+1}^N z_i^2 
  + \sum_{\substack{i=1, \\ u_i > 2\eps}}^n z_i^2
  + \sum_{i=1}^N z_i^2\bigg).
\end{align*}
From this point on, we can argue as in the previous setting to find that
\begin{align*}
  z^T h''_\eps(u)A(u)z \ge \frac{\lambda}{4}\sum_{i=1}^N z_i^2.
\end{align*}
This finishes the proof.
\end{proof}

In the volume-filling case, the variable $u_1 = 1-\sum_{i=2}^n u_i$ can be eliminated and expressed in terms of the reduced variable $\bar{u}=(u_2,\ldots,u_N)$. We show below that the quadratic form $z\mapsto z^T h''(u)A(u)z$ coincides with the quadratic form acting on the reduced variable. This result holds for any smooth function $h$ and any matrix $A$ with vanishing column sum. We now fix the set for admissible solutions as $\mathcal D = \{(u_1,\ldots,u_n)\in(0,1)^n :\sum_{i=1}^nu_i=1\}\times(0,L)^{N-n}\}$.

\begin{lemma}\label{lem.bar}
Let $h\in C^2(\overline{\mathcal{D}};\R)$ and $A=(A_{ij})\in\R^{N\times N}$ satisfying $\sum_{i=1}^n A_{ij}=0$ for $j=1,\ldots,n$. Define $\bar{h}:\R^{N-1}\to\R$ and the reduced matrix $\bar{A}\in\R^{(N-1)\times(N-1)}$ by, respectively,
\begin{align*}
  \bar{h}(\bar{u}) = h\bigg(1-\sum_{i=2}^n u_i,u_2,\ldots,u_N\bigg),
  \quad 
  \bar{A}_{ij} = \begin{cases}
  A_{ij} - A_{i1} &\mbox{if }i,j=2,\ldots,n, \\
  A_{ij} &\mbox{else},
  \end{cases}
\end{align*}
where $\bar{u}=(u_2,\ldots,u_N)\in \R^{N-1}$ is such that $(1- \sum_{i=2}^n u_i, u_2, \ldots, u_N) \in \overline{\mathcal{D}}$. For given $\bar{z}=(z_2,\ldots,z_N) \in\R^{N-1}$ define $z_1=-\sum_{i=2}^n z_i$ and $z=(z_1,\ldots,z_N)\in\R^N$. Then, for all $\bar{u}=(u_2,\ldots,u_N)$, where $u=(u_1,\ldots,u_N) \in \overline{\mathcal D}$ with $u_1=1-\sum_{i=2}^n u_i$, 
\begin{align*}
  \bar{z}^T\bar{h}''(\bar{u})\bar{A}\bar{z}  = z^T h''(u)Az.
\end{align*}
\end{lemma}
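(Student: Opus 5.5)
The plan is to write everything in terms of the linear embedding $P:\R^{N-1}\to\R^N$ defined by $P\bar z = z$, where $z_1=-\sum_{i=2}^n z_i$ and $z_i$ is unchanged for $i\ge 2$. The parametrization $\bar u\mapsto u = e_1 + P\bar u$ is affine, so $\bar h = h\circ(e_1+P\,\cdot)$. The chain rule for an affine map has no second-order correction term, which gives exactly
\begin{align*}
  \bar h''(\bar u) = P^T h''(u) P .
\end{align*}
It follows that $\bar z^T\bar h''(\bar u)\bar A\bar z = \bar z^T P^T h''(u)\,(P\bar A)\,\bar z$, whereas $z^Th''(u)Az = \bar z^T P^T h''(u)\,(AP)\,\bar z$. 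The lemma therefore reduces to the matrix identity $AP = P\bar A$, or at least to this identity after multiplication by $\bar z^T P^Th''(u)$ on the left and $\bar z$ on the right.

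To check $AP=P\bar A$, compute the columns of $AP$. For $j=2,\ldots,n$, column $j$ of $AP$ equals $A_{\cdot j}-A_{\cdot 1}$; for $j>n$, it equals $A_{\cdot j}$. In rows $i=2,\ldots,N$ these entries coincide with $\bar A_{ij}$ by the definition of $\bar A$, and rows $2,\ldots,N$ of $P\bar A$ are just the rows of $\bar A$. It remains to compare row $1$. In $P\bar A$, row $1$ is $-\sum_{i=2}^n \bar A_{ij}$. For $j\le n$ we need
\begin{align*}
  A_{1j}-A_{11} = -\sum_{i=2}^n (A_{ij}-A_{i1}),
\end{align*}
which is exactly the difference of the vanishing column sums $\sum_{i=1}^nA_{ij}=0$ and $\sum_{i=1}^nA_{i1}=0$. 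This is where the hypothesis of the lemma enters.

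The main obstacle is row $1$ in the columns $j>n$. There the required identity is $A_{1j}=-\sum_{i=2}^nA_{ij}$, that is, $\sum_{i=1}^nA_{ij}=0$ also for $j=n+1,\ldots,N$, and the lemma as stated only assumes this for $j\le n$. My first attempt is to write
\begin{align*}
  AP = P\bar A + e_1 w^T, \qquad w_j=\sum_{i=1}^n A_{ij}\ (j>n),\quad w_j=0\ (j\le n),
\end{align*}
and to examine the defect $(\bar z^TP^Th''(u)e_1)(w^T\bar z)$. Since $P^Th''e_1$ generally does not vanish, this defect is zero only if $w=0$. I would therefore use that the column-sum condition holds for all $j$ in the intended applications. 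This is the natural requirement for the reduced system: summing the first $n$ equations under $\sum_{i\le n}\na u_i=0$ forces $\sum_{i\le n}\sum_j A_{ij}\na u_j=0$. If $w\neq 0$, the cleanest fix is to strengthen the hypothesis to $\sum_{i=1}^nA_{ij}=0$ for all $j=1,\ldots,N$. With that, $AP=P\bar A$ holds exactly, and the claimed identity follows immediately from the two displayed formulas.
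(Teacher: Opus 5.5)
Your route is the paper's: its matrix $M$ is your $P$, it uses $\bar h''(\bar u)=M^Th''(u)M$, and it then checks $M\bar A\bar z=Az$ row by row. Your observation on row $1$, columns $j>n$, is correct and sharper than the written proof. The paper's row-$1$ computation replaces $-\sum_{j=2}^n\sum_{k=1}^N A_{jk}z_k$ by $\sum_{k=1}^N A_{1k}z_k$, which silently uses $\sum_{i=1}^nA_{ik}=0$ for all $k$, including $k>n$. Strengthening the hypothesis as you propose is the right repair at that point. It holds in the paper's applications: trivially when $N=n$, and because of $\sum_{i\le n}u_i=1$ in the chemotaxis example.

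However, your claim that rows $2,\ldots,N$ of $AP$ coincide with $\bar A$ ``by the definition of $\bar A$'' is false for rows $i>n$. For $i>n$ and $j\in\{2,\ldots,n\}$ one has $(AP)_{ij}=A_{ij}-A_{i1}$. The stated definition gives $\bar A_{ij}=A_{ij}$ there, because the case $A_{ij}-A_{i1}$ is prescribed only for $i,j\in\{2,\ldots,n\}$. This defect survives in the quadratic form, even under your strengthened hypothesis. For instance, take $n=2$, $N=3$, $h(u)=\frac12|u|^2$ and
\[
A=\begin{pmatrix}1&-1&0\\-1&1&0\\1&0&1\end{pmatrix},
\]
for which all three column sums over $i\le 2$ vanish. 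Then $\bar h''=\bar A=\operatorname{diag}(2,1)$, hence $\bar z^T\bar h''\bar A\bar z=4z_2^2+z_3^2$, whereas $z^Th''Az=4z_2^2+z_3^2-z_2z_3$.

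So you additionally need one of two things. Either assume $A_{i1}=0$ for $i>n$, which is true in the chemotaxis example, where the lower-left block of $A$ vanishes. Or use the corrected definition $\bar A_{ij}=A_{ij}-A_{i1}$ for all $i=2,\ldots,N$ and $j=2,\ldots,n$; this is exactly what substituting $\na u_1=-\sum_{k=2}^n\na u_k$ into every equation produces. With both repairs, $AP=P\bar A$ holds exactly and your argument is complete. The paper's proof has the same omission: it verifies only rows $1,\ldots,n$ before concluding $M\bar A\bar z=Az$.
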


\begin{proof}
We introduce the matrix $M=(M_{ij})\in\R^{N\times(N-1)}$ by
\begin{align*}
  M_{ij} = \begin{cases}
  -1 &\mbox{if }i=1,\ j=2,\ldots,n, \\
  \delta_{ij} &\mbox{else},
  \end{cases}
\end{align*}
where $i=1,\ldots,N$ and $j=2,\ldots,N$. This matrix is the Jacobian of the mapping $(u_2,\ldots,u_N)\mapsto(1-\sum_{i=2}^n u_i,u_2,\ldots,u_N)$. Therefore, the chain rule yields the identity $\bar{h}''(\bar{u}) = M^T h''(u)M$. Let $\bar{z}=(z_2,\ldots,z_N) \in\R^{N-1}$ and set $z_1=-\sum_{i=2}^n z_i$. Then $M\bar{z}=z$. We compute for $i\in\{2,\ldots,n\}$,
\begin{align*}
  (\bar{A}\bar{z})_i &= \sum_{k=2}^n(A_{ik}-A_{i1})z_k 
  + \sum_{k=n+1}^N A_{ik}z_k
  = \sum_{k=2}^N A_{ik}z_k - A_{i1}\sum_{k=2}^n z_k
  = \sum_{k=1}^N A_{ik}z_k = (Az)_i.
\end{align*}
This shows that $(M\bar{A}\bar{z})_i = (\bar{A}\bar{z})_i = (Az)_i$ for $i=2,\ldots,n$. If $i=1$, we deduce from $A_{1j} = -\sum_{k=2}^n A_{kj}$ that
\begin{align*}
  (M\bar A\bar z)_1 = -\sum_{j=2}^n(\bar{A}\bar{z})_j 
  = -\sum_{j=2}^n(Az)_j = -\sum_{j=2}^n\sum_{k=1}^N A_{jk}z_k
  = \sum_{k=1}^N A_{1k}z_k = (Az)_1.
\end{align*}
Therefore, $M\bar A\bar z=Az$ and 
\begin{align*}
  \bar{z}^T\bar{h}''(\bar{u})\bar{A}\bar{z}
  = \bar{z}^T(M^T h''(u)M)\bar{A}\bar{z}
  = (M\bar{z})^T h''(u)(M\bar{A}\bar{z}) = z^Th''(u)Az,
\end{align*}
finishing the proof.
\end{proof}

With the glued entropy density $h_\eps$, we define the reduced glued entropy density
\begin{align}\label{2.reduced}
  \bar{h}_\eps(\bar{u}) 
  = h_\eps\bigg(1-\sum_{i=2}^n u_i,u_2,\ldots,u_N\bigg).
\end{align}
The previous lemmas imply that $\bar{h}_\eps$ is strongly convex and $\bar{h}''_\eps(\bar u)\bar A(\bar u)$ is positive definite uniformly in $u\in\overline{\mathcal{D}}$ also in the volume-filling setting. 

\begin{lemma}\label{lem.convex2}
The reduced glued entropy density $\bar{h}_\eps$ is strongly convex.
\end{lemma}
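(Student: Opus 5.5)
The plan is to reduce the claim to Lemma \ref{lem.convex} via the chain-rule identity already used in the proof of Lemma \ref{lem.bar}. Let $M\in\R^{N\times(N-1)}$ be the Jacobian of the affine map $\bar u\mapsto(1-\sum_{i=2}^n u_i,u_2,\ldots,u_N)$, as in that proof. Since this map is affine, its second derivatives vanish. Differentiating \eqref{2.reduced} twice therefore gives exactly
\begin{align*}
  \bar h_\eps''(\bar u) = M^T h_\eps''(u)M,
\end{align*}
with no additional first-order terms, where $u=(1-\sum_{i=2}^n u_i,u_2,\ldots,u_N)\in\overline{\mathcal D}$.

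For $\bar z=(z_2,\ldots,z_N)\in\R^{N-1}$, set $z=M\bar z$, i.e.\ $z_1=-\sum_{i=2}^n z_i$ and $z_i$ unchanged for $i\ge2$. Lemma \ref{lem.convex} holds for every $z\in\R^N$ and $u\in\overline{\mathcal D}$. Applying it to this particular $z$ yields
\begin{align*}
  \bar z^T\bar h_\eps''(\bar u)\bar z = z^T h_\eps''(u) z \ge c|z|^2
  = c\bigg(\Big(\sum_{i=2}^n z_i\Big)^2 + \sum_{i=2}^N z_i^2\bigg)
  \ge c|\bar z|^2,
\end{align*}
where we simply drop the nonnegative term $z_1^2$. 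This gives strong convexity of $\bar h_\eps$ with the same constant $c$, uniformly in $\bar u$ ranging over the projection of $\overline{\mathcal D}$.

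The only point requiring care is that Lemma \ref{lem.convex} is valid on all of $\overline{\mathcal D}$, including points where some $u_i=0$. This is exactly the property guaranteed by the glued construction \eqref{2.hpp}: $h''_{\eps,i}$ is bounded below by $\min\{\eps^{m_i-2},(\sup u_i)^{m_i-2}\}>0$, because $m_i<2$ and $u_i\le1$ in the volume-filling set. Consequently there is no real obstacle, and the lemma is a direct consequence of $M$ being injective (in fact $|M\bar z|\ge|\bar z|$).
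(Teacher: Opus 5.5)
Your proposal is correct and is essentially the paper's proof: both use the chain-rule identity $\bar h_\eps''(\bar u)=M^Th_\eps''(u)M$, apply Lemma~\ref{lem.convex} to $z=M\bar z$, and drop the nonnegative term $z_1^2=\big(\sum_{i=2}^n z_i\big)^2$. You write this term correctly, whereas the paper's display has $\big|\sum_{j=1}^n z_j\big|^2$ there, which is a harmless typo.
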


\begin{proof}
Let $M\in\R^{N\times(N-1)}$ be the matrix of the previous proof satisfying $\bar{h}''_\eps(\bar u)=M^Th_\eps''(u)M$. Then for $\bar{z}=(z_2,\ldots,z_N)$, by Lemma \ref{lem.convex} and definition of $M$,
\begin{align*}
  \bar{z}^T \bar{h}''_\eps(\bar u)\bar{z}
  = (M\bar{z})^T h''_\eps(u)(M\bar z) \ge c|M\bar{z}|^2
  = c\sum_{i=2}^N z_i^2 + c\bigg|\sum_{j=1}^n z_j\bigg|^2
  \ge c\sum_{i=2}^N z_i^2 = c|\bar{z}|^2,
\end{align*}
finishing the proof.
\end{proof}

\begin{lemma}\label{lem.posdef2}
Let $h_\eps$ be the glued entropy density defined in \eqref{2.heps} and let $\bar{h}_\eps$ be the reduced glued entropy density introduced in \eqref{2.reduced}. Then there exists $c>0$ such that
\begin{align*}
  \bar{z}^T \bar{h}_\eps''(\bar u)\bar A(\bar u)\bar{z} \ge c|\bar{z}|^2 
  \quad\mbox{for }\bar{z}\in\R^{N-1},\  u\in\overline{\mathcal{D}}.
\end{align*}
\end{lemma}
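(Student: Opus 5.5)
The plan is to reduce the statement to the volume-filling part of Lemma \ref{lem.posdef} by means of the algebraic identity in Lemma \ref{lem.bar}.

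First I check that Lemma \ref{lem.bar} applies with $h$ replaced by $h_\eps$ and $A$ replaced by $A(u)$. The glued entropy $h_\eps$ is $C^2$ on $\overline{\mathcal{D}}$: its second derivatives are bounded by construction \eqref{2.hpp}, and the Rao part is quadratic. Assumption (A5)$_2$ gives the vanishing column sums $\sum_{i=1}^n A_{ij}(u)=0$ for $j=1,\ldots,n$. Fix $\bar u$ with $u=(1-\sum_{i=2}^n u_i,u_2,\ldots,u_N)\in\overline{\mathcal{D}}$ and $\bar z\in\R^{N-1}$. Set $z_1=-\sum_{i=2}^n z_i$, so that $z=M\bar z$. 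Lemma \ref{lem.bar} then yields
\begin{align*}
  \bar z^T\bar h_\eps''(\bar u)\bar A(\bar u)\bar z = z^T h_\eps''(u)A(u)z .
\end{align*}

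Second, this $z$ satisfies $\sum_{i=1}^n z_i=0$ by construction, and $u$ satisfies the volume-filling constraint. The second assertion of Lemma \ref{lem.posdef} therefore gives $z^Th_\eps''(u)A(u)z\ge c|z|^2$, with $c$ independent of $u$.

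Third, I pass back from $|z|^2$ to $|\bar z|^2$:
\begin{align*}
  |z|^2 = z_1^2+\sum_{i=2}^N z_i^2 \ge |\bar z|^2 .
\end{align*}
This is the same computation as in Lemma \ref{lem.convex2}, and combining the three steps gives the claim with the same constant $c$.

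The only step needing care is the second one, namely that the volume-filling case of Lemma \ref{lem.posdef} really holds uniformly on $\overline{\mathcal{D}}$ for vectors with zero sum over the first $n$ components. That is exactly what Lemma \ref{lem.posdef} provides once $\eps$ is small. Here $\eps<\eps_\gamma$ is fixed as in that lemma, so that $\gamma$ is large enough to absorb the term $C(\kappa,\lambda)$. Otherwise the argument is purely algebraic.
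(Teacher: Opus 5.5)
Your proposal is correct and matches the paper's proof: you set $z_1=-\sum_{i=2}^n z_i$, use Lemma~\ref{lem.bar} to identify $\bar z^T\bar h_\eps''(\bar u)\bar A(\bar u)\bar z$ with $z^Th_\eps''(u)A(u)z$, apply the volume-filling case of Lemma~\ref{lem.posdef}, and conclude with $|z|^2\ge|\bar z|^2$. You also check the hypotheses of Lemma~\ref{lem.bar} explicitly (the $C^2$ regularity of $h_\eps$ on $\overline{\mathcal{D}}$ and the vanishing column sums from (A5)$_2$), which the paper leaves implicit.
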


\begin{proof}
Let $\bar{z}=(z_2,\ldots,z_N)\in\R^{N-1}$ and set $z_1=-\sum_{i=2}^n z_i$ and $z=(z_1,z_2,\ldots,z_N)$. Then, by Lemmas \ref{lem.posdef} and \ref{lem.bar},
\begin{align*}
  \bar{z}^T \bar{h}_\eps''(\bar u)\bar A(\bar u)\bar{z}
  = z^T h''_\eps(u)A(u)z \ge c|z|^2 \ge c|\bar{z}|^2,
\end{align*}
proving the positive definiteness.
\end{proof}


\subsection{Weak--strong uniqueness proof}
\label{sec.wsu12}

Both entropy densities $h_\eps$ and $\bar{h}_\eps$, constructed in the previous section, satisfy two properties: The Hessian of the entropy density and the product of the Hessian of the entropy density with the diffusion matrix are positive definite. We show that these properties are sufficient to establish a weak--strong uniqueness property. Accordingly, throughout this subsection, we assume a general entropy density satisfying the following assumptions:
\begin{itemize}
\item[(B1)] The entropy density $h\in C^4(\overline{\mathcal{D}})$, $\mathcal{D}\subset \mathbb R^N$, is bounded from below and strongly convex, i.e., there exists $c_1>0$ such that $z^Th''(u)z\ge c_1|z|^2$ for all $z\in\R^N$ and $u\in\overline{\mathcal{D}}$.
\item[(B2)] There exists $c_2>0$ such that $z^Th''(u)A(u)z\ge c_2|z|^2$ for all $z\in\R^N$ and $u\in\overline{\mathcal{D}}$.
\end{itemize}
Condition (B1) implies that there exist constants $C_*$, $C^*>0$ such that 
\begin{align}\label{3.h2}
  C_*|u-v|^2 \le h(u|v) \le C^*|u-v|^2\quad\mbox{for all }
  u,v\in\overline{\mathcal{D}},  
\end{align}
where $h(u|v) =  h(u) - h(v) - h'(v)\cdot(u-v)$ is the relative entropy density. Indeed, by Taylor expansion,
\begin{align*}
  \frac{c_1}{2}|u-v|^2\le 
  h(u|v) = \int_0^1\theta(u-v)^Th''(u+\theta(u-v))
  (u-v)d\theta \le C|u-v|^2,
\end{align*}
where the lower bound follows from the strong convexity and the upper bound is a consequence of the regularity of $h$. In the volume-filling case, we suppose Assumption (A6)$_2$.

\begin{theorem}[Weak--strong uniqueness for general entropy]
\label{thm.wsu4}
Let Assumptions (A1)--(A3) and (B1)--(B2) hold. Let  $A:\overline{\mathcal{D}}\to\R^{N\times N}$ be Lipschitz continuous and $r:\overline{\mathcal{D}}\to\R^N$ be bounded and Lipschitz continuous. Furthermore, let $u$ be a bounded weak solution and $v$ be a strong solution to 
\eqref{1.eq}--\eqref{1.bic} in the sense of Definitions \ref{def.weak} and \ref{def.str} with the same initial data. Then $u(t)=v(t)$ in $\Omega$ for $t>0$.  
\end{theorem}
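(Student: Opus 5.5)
We prove the statement with the relative entropy $H(t)=\int_\Omega h(u(t)|v(t))dx$, where $h$ satisfies (B1)--(B2). By \eqref{3.h2} we have $H(t)\ge C_*\|u(t)-v(t)\|_{L^2(\Omega)}^2$, and $H(0)=0$ because the initial data coincide. It therefore suffices to show
\begin{align*}
  H(t) \le \int_0^t \alpha(s)\|u(s)-v(s)\|_{L^2(\Omega)}^2 ds,
  \qquad \alpha(s)=C\big(1+\|\na v(s)\|_{L^p(\Omega)}^2\big).
\end{align*}
Since $\alpha\in L^1(0,T)$ by Definition \ref{def.str}, Gronwall's lemma then gives $H\equiv 0$ and hence $u=v$.

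\textbf{Step 1: rigorous entropy identity.} Because $h\in C^4(\overline{\mathcal D})$ and $u,v$ take values in $\overline{\mathcal D}$, the functions $h'(u)$, $h'(v)$ and $h''(v)(u-v)$ lie in $L^2(0,T;H^1(\Omega))$ and are bounded. They are therefore admissible test functions in Definition \ref{def.weak}. We use three pieces:
\begin{itemize}
\item the duality chain rule $\frac{d}{dt}\int_\Omega h(u)dx=\langle\pa_t u,h'(u)\rangle$, justified for bounded $u\in L^2(H^1)$ with $\pa_t u\in L^2((H^1)')$ by mollification in time;
\item the analogous identity for $v$;
\item the product rule $\frac{d}{dt}\int_\Omega h'(v)\cdot(u-v)dx=\langle\pa_t v,h''(v)(u-v)\rangle+\langle\pa_t(u-v),h'(v)\rangle$.
\end{itemize}
Testing the $u$-equation with $h'(u)-h'(v)$ and the $v$-equation with $h''(v)(u-v)$ produces the identity $\frac{d}{dt}H=I_1+\dots+I_5$ of \eqref{1.I15}. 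We write it in the gradient form with $A$, rather than $B=Ah''^{-1}$, as $\na h'(w)=h''(w)\na w$. In the volume-filling case we first pass to the reduced variables $\bar u$ via Lemma \ref{lem.bar}, so that the same computation applies in $N-1$ variables. The vanishing column sums and (A6)$_2$ make the reduced system consistent.

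\textbf{Step 2: estimates.} The principal term is
\[
  -\int_\Omega \na(u-v)^T\,h''(u)A(u)\,\na(u-v)\,dx .
\]
By (B2) it is bounded above by $-c_2\|\na(u-v)\|_2^2$. The remainder terms arise from $h''(u)-h''(v)$, $A(u)-A(v)$ and the Taylor remainder $h'(u)-h'(v)-h''(v)(u-v)=O(|u-v|^2)$. Because $h''$, $h'''$ and $A$ are Lipschitz on the bounded set $\overline{\mathcal D}$, each remainder is bounded by
\[
  C\int_\Omega |u-v|\,|\na v|\,\big(|\na(u-v)|+|u-v|\,|\na v|\big)\,dx .
\]
Young's inequality then reduces everything to $\frac{c_2}{4}\|\na(u-v)\|_2^2+C\int_\Omega|u-v|^2|\na v|^2dx$. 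The reaction terms are at most $C\|u-v\|_2^2$, using the Lipschitz continuity and boundedness of $r$ and the bound $|h''(v)(u-v)|\le C|u-v|$.

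\textbf{Step 3: the main obstacle.} The main obstacle is the weighted term $\int_\Omega|u-v|^2|\na v|^2dx$, because only $\na v\in L^2(0,T;L^p(\Omega))$ with $p>d$ is available. Hölder's inequality gives the bound $\|\na v\|_p^2\|u-v\|_{2p/(p-2)}^2$. Since $p>d$, the exponent $q=2p/(p-2)$ is subcritical for $H^1$. The Gagliardo--Nirenberg inequality therefore gives
\[
  \|w\|_q\le C\|w\|_{H^1}^\theta\|w\|_2^{1-\theta}, \qquad \theta=d/p<1 .
\]
Applying Young's inequality with exponents $1/\theta$ and $1/(1-\theta)$ yields
\[
  \|\na v\|_p^2\|w\|_q^2\le\delta\|\na w\|_2^2+C_\delta\big(1+\|\na v\|_p^{2/(1-\theta)}\big)\|w\|_2^2 .
\]
Here the constant is of the form $C_\delta(1+\|\na v\|_p^{2/(1-\theta)})$, which is integrable in time only if $\na v\in L^{2/(1-\theta)}$ in time. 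To stay within $L^2$ in time, I would instead exploit the boundedness $|u-v|\le C$. Write $|u-v|^2|\na v|^2\le C|u-v|^{2-s}\,|u-v|^{s}|\na v|^2$ and redistribute the powers so that the Young exponent on $\|\na v\|_p$ becomes exactly $2$. Equivalently, split $\na v$ into a part with $|\na v|\le K$ and a part with $|\na v|>K$, and control the latter using $L^\infty$ bounds. This matches the form \eqref{2.GN} claimed in the introduction. Choosing $\delta=c_2/4$ absorbs the gradient terms, which leaves $\frac{d}{dt}H\le\alpha(t)\|u-v\|_2^2$, and Gronwall's lemma concludes the proof.
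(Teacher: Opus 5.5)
Your Steps 1 and 2 are sound and match the paper's relative-entropy computation. Organizing the dissipation around $h''(u)A(u)$ instead of $B=Ah''^{-1}$ is harmless and even spares you the Lipschitz bound on $h''^{-1}$.

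The gap is in Step 3. Your diagnosis there is right. H\"older, Gagliardo--Nirenberg with $\theta=d/p$, and Young give, for $w=u-v$,
\[
  \int_\Omega|w|^2|\na v|^2dx \le \delta\|\na w\|_{L^2(\Omega)}^2
  + C_\delta\big(1+\|\na v\|_{L^p(\Omega)}^{2p/(p-d)}\big)\|w\|_{L^2(\Omega)}^2 .
\]
The last inequality in the paper's \eqref{2.GN}, written with $\|\na v\|_{L^p(\Omega)}^2$, is exactly this Young step and is not correct as written. So the paper does not supply the estimate you are missing.

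Your repair does not work either:
\begin{itemize}
\item Using $|w|\le M$ to trade powers of $|w|$ for constants lowers the homogeneity in $w$. For instance, $\|w\|_{L^q}^2\le M^{2-4/q}\|w\|_{L^2}^{4/q}$ leads to $H'\le\alpha(t)H^{2/q}$ with $2/q<1$. Such an inequality does not force $H\equiv0$ from $H(0)=0$.
\item Truncating at $|\na v|=K$ leaves the additive term $M^2\int_{\{|\na v|>K\}}|\na v|^2dx$, which does not vanish when $u=v$. Gronwall then yields only $H(t)\le e^{CK^2t}M^2\|\na v\,\chi_{\{|\na v|>K\}}\|_{L^2(\Omega\times(0,t))}^2$. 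This tends to $0$ as $K\to\infty$ only under exponential-square integrability of $\na v$.
\end{itemize}

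In fact no redistribution can produce $\frac{d}{dt}H\le C(1+\|\na v\|_{L^p(\Omega)}^2)\|u-v\|_{L^2(\Omega)}^2$. The fixed-time inequality
\[
  \int_\Omega|w|^2|\na v|^2dx \le \delta\|\na w\|_{L^2(\Omega)}^2
  + C\big(1+\|\na v\|_{L^p(\Omega)}^2\big)\|w\|_{L^2(\Omega)}^2
\]
fails for arbitrarily small bounded $w$ and $v$. Take a cutoff $\phi\in C_c^\infty(B_2)$ with $\phi=1$ on $B_1$. Put $w=M\phi((x-x_0)/\rho)$ and $v=c\,\phi((x-x_0)/\rho)\sin(kx_1)$, with $B_{2\rho}(x_0)\subset\Omega$ and $k\rho\ge1$. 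Then:
\begin{itemize}
\item the left-hand side is at least of order $M^2c^2k^2\rho^d$;
\item $\|\na w\|_{L^2}^2$ is of order $M^2\rho^{d-2}$ and $\|w\|_{L^2}^2$ is of order $M^2\rho^d$;
\item $\|\na v\|_{L^p}^2\|w\|_{L^2}^2\le C'M^2c^2k^2\rho^{d+2d/p}$.
\end{itemize}
Choosing first $\rho$ small and then $k$ large violates the inequality for any fixed $\delta$ and $C$.

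This is the Serrin scaling. The relative-entropy argument (yours and the paper's) closes if $\na v\in L^{2p/(p-d)}(0,T;L^p(\Omega))$, e.g.\ $v\in L^\infty(0,T;W^{1,p}(\Omega))$ as in Amann's local theory. Gronwall then runs with $\alpha=C(1+\|\na v\|_{L^p}^{2p/(p-d)})\in L^1(0,T)$. It does not close under the bare hypothesis $\na v\in L^2(0,T;L^p(\Omega))$ of Definition \ref{def.str}. As a sanity check, for $d=1$ and $p=2$ that hypothesis holds for every bounded weak solution. The statement as posed would therefore give uniqueness of all bounded weak solutions in one space dimension.
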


Since the glued entropy densities $h_\eps$ and $\bar{h}_\eps$ satisfy conditions (B1) and (B2), this result proves Theorems \ref{thm.wsu1} and \ref{thm.wsu2}.

\begin{proof}
By the strong convexity of $h$, the Hessian $h''(u)$ is invertible and the spectral norm of the inverse is bounded from above by $1/c_1$ (see condition (B1)). Since $h''$ is Lipschitz continuous, its inverse is Lipschitz continuous too, as we have
\begin{align*}
  |(h''&(u))^{-1}-(h''(v))^{-1}|
  \le |(h''(u))^{-1}||I-h''(u)(h''(v))^{-1}| \\
  &\le \frac{1}{c_1}|I-h''(u)(h''(v))^{-1}|
  = \frac{1}{c_1}|(h''(u)-h''(v))(h''(v))^{-1}|
  \le C|u-v|^2.
\end{align*}

Next, we observe that the regularity of the entropy density allows us to use $(\pa h/\pa u_i)(u)$ as a test function in the weak formulation of \eqref{1.eq}, and the chain rule \cite[Lemma A.1]{LaMa23} (here we use the boundedness of $h''(u)$) gives
\begin{align*}
  \frac{d}{dt}\int_\Omega h(u)dx
  = \langle \pa_t u,h'(u)\rangle
  = -\int_\Omega \na u^T:h''(u)A(u)\na u dx 
  + \int_\Omega r(u)\cdot h'(u)dx.
\end{align*}
The same identity holds for the strong solution $v$. Therefore,
\begin{align*}
  \frac{d}{dt}\int_\Omega h(u|v)dx
  &= \frac{d}{dt}\int_\Omega h(u)dx - \frac{d}{dt}\int_\Omega h(v)dx \\
  &\phantom{xx}- \int_\Omega h''(v):(\pa_t v\otimes(u-v))dx
  + \langle\pa_t(u-v),h'(v)\rangle \\
  &= -\int_\Omega\na u^T:h''(u)A(u)\na u dx 
  + \int_\Omega\na v^T: h''(v)A(v)\na v dx \\
  &\phantom{xx}
  + \int_\Omega \big(r(u)\cdot h'(u)dx - r(v)\cdot h'(v)\big)dx\\
  &\phantom{xx}- \int_\Omega h''(v):(\pa_t v\otimes(u-v))dx
  + \langle\pa_t(u-v),h'(v)\rangle.
\end{align*}
We insert equations \eqref{1.eq}, satisfied for $u$ and $v$:
\begin{align*}
  \frac{d}{dt}&\int_\Omega h(u|v)dx
  = -\int_\Omega(\na h'(u))^T:B(u)\na h'(u)dx
  - \int_\Omega(\na h'(v))^T:B(v)\na h'(v)dx \\
  &\phantom{xx}+ \sum_{i,j,k=1}^N\int_\Omega\na\bigg(
  \frac{\pa^2 h}{\pa u_i\pa u_j}(v)(u_j-v_j)\bigg)
  A_{ik}(v)\na v_k dx \\
  &\phantom{xx}+ \sum_{i,j=1}^N\int_\Omega
  \na\bigg(\frac{\pa h}{\pa u_i}(v)\bigg)
  \big(A_{ij}(u)\na u_j - A_{ij}(v)\na v_j\big)dx \\
  &\phantom{xx}
  + \sum_{i=1}^N\int_\Omega\bigg(r_i(u)\frac{\pa h}{\pa u_i}(u)
  - r_i(v)\frac{\pa h}{\pa u_i}(v)
  - \sum_{j=1}^N r_i(v)\frac{\pa^2 h}{\pa u_i\pa u_j}(v)(u_j-v_j)
  \bigg)dx \\
  &\phantom{xx}- \sum_{i=1}^N\int_\Omega(r_i(u)-r_i(v))
  \frac{\pa h}{\pa u_i}(v)dx,
\end{align*}
recalling the definition $B(u)=A(u)h''(u)^{-1}$. We infer after some rearrangements that
\begin{align*}
  \frac{d}{dt}&\int_\Omega h(u|v)dx
  = -\sum_{i,j=1}^N\int_\Omega B_{ij}(u)
  \na\frac{\pa h}{\pa u_i}(u)\cdot
  \na\frac{\pa h}{\pa u_j}(u)dx \\
  &\phantom{xx}- \sum_{i,j=1}^N\int_\Omega B_{ij}(v)
  \na\frac{\pa h}{\pa u_i}(v)\cdot
  \na\frac{\pa h}{\pa u_j}(v)dx \\
  &\phantom{xx}+ \sum_{i,j,k=1}^N\int_\Omega B_{ij}(v)
  \na\bigg(\frac{\pa^2 h}{\pa u_i\pa u_k}(v)(u_k-v_k)\bigg)
  \cdot\na\frac{\pa h}{\pa u_j}(v)dx \\
  &\phantom{xx}+ \sum_{i,j=1}^N\int_\Omega
  \na\bigg(\frac{\pa h}{\pa u_i}(v)\bigg)\cdot
  \bigg(B_{ij}(u)\na\frac{\pa h}{\pa u_j}(u)
  - B_{ij}(v)\na\frac{\pa h}{\pa u_j}(v)\bigg)dx \\
  &\phantom{xx}+ \sum_{i=1}^N\int_\Omega(r_i(u)-r_i(v))
  \bigg(\frac{\pa h}{\pa u_i}(u) - \frac{\pa h}{\pa u_i}(v)\bigg)dx \\
  &\phantom{xx}+ \sum_{i=1}^N\int_\Omega r_i(v)
  \bigg(\frac{\pa h}{\pa u_i}(u) - \frac{\pa h}{\pa u_i}(v)
  - \sum_{k=1}^N\frac{\pa^2 h}{\pa u_i\pa u_k}(v)(u_k-v_k)\bigg)dx.
\end{align*}
The second term on the right-hand side cancels with a part of the fourth term. Adding and subtracting the terms
\begin{align*}
  & \sum_{i,j=1}^N\int_\Omega B_{ij}(u)\na\bigg(\frac{\pa h}{\pa u_i}(u)
  - \frac{\pa h}{\pa u_i}(v)\bigg)\cdot \nabla \left( \frac{\partial h}{\partial u_j}(v)\right)dx,\\
   & \sum_{i,j=1}^N\int_\Omega B_{ij}(v)\na\bigg(\frac{\pa h}{\pa u_i}(u)
  - \frac{\pa h}{\pa u_i}(v)\bigg)\cdot \nabla \left( \frac{\partial h}{\partial u_j}(v)\right)dx,
\end{align*}
we end up with
\begin{align}\label{3.I59}
  \frac{d}{dt}\int_\Omega h(u|v)dx = I_1+\cdots+I_5, 
\end{align}
where $I_1,\ldots,I_5$ are defined in \eqref{1.I15}. 

For the first term $I_1$, we exploit the positive definiteness property (B2) for $z\in\R^N$:
\begin{align*}
  z^TB(u)z = (h''(u)z)^T(h''(u)A(u))(h''(u)^{-1}z)
  \ge \lambda|h''(u)^{-1}z|^2,
\end{align*}
which yields for $H^*(u):=h''(u)^{-1}$ and $H^*(u)=(H^*_{ij})$ that 
\begin{align*}
  I_1\le -\lambda\sum_{i=1}^N\int_\Omega\bigg|
  \sum_{j=1}^N H^*_{ij}(u)\na
  \bigg(\frac{\pa h}{\pa u_j}(u)-\frac{\pa h}{\pa u_j}(v)\bigg)
  \bigg|^2dx.
\end{align*}
The right-hand side can be estimated further by observing that
\begin{align*}
  |\na&(u_i-v_i)|^2
  = \bigg|\sum_{j=1}^N\bigg\{H^*_{ij}(u)\na
  \bigg(\frac{\pa h}{\pa u_j}(u)-\frac{\pa h}{\pa u_j}(v)\bigg)
  + (H^*_{ij}(u)-H^*_{ij}(v))\na\frac{\pa h}{\pa u_i}(v)\bigg\}\bigg| \\
  &\le 2\bigg|\sum_{j=1}^NH^*_{ij}(u)\na
  \bigg(\frac{\pa h}{\pa u_j}(u)-\frac{\pa h}{\pa u_j}(v)\bigg)\bigg|^2
  + 2\bigg|\sum_{j=1}^N(H^*_{ij}(u)-H^*_{ij}(v))
  \na\frac{\pa h}{\pa u_i}(v)\bigg|^2,
\end{align*}
leading to
\begin{align*}
  I_1 \le -\frac{\lambda}{2}\int_\Omega|\na(u-v)|^2 dx
  + \lambda\sum_{i=1}^N\int_\Omega
  \bigg|\sum_{j=1}^N(H^*_{ij}(u)-H^*_{ij}(v))
  \na\frac{\pa h}{\pa u_i}(v)\bigg|^2 dx.
\end{align*}
The boundedness of $h''(v)$ implies that $|\na h'(v)|=|h''(v)\na v|\le C|\na v|$. Hence, because of the Lipschitz continuity of $H_{ij}$, 
\begin{align*}
  I_1 \le -\frac{\lambda}{2}\int_\Omega|\na(u-v)|^2 dx
  + C\int_\Omega |u-v|^2|\na v|^2 dx.
\end{align*}

We compute the gradient in the integral $I_2$:
\begin{align*}
  I_2 &= -\sum_{i,j=1}^N\int_\Omega B_{ij}(v)\bigg\{
  \sum_{\ell=1}^N\frac{\pa^2 h}{\pa u_i\pa u_\ell}(u)\na u_\ell
  - \sum_{\ell=1}^N\frac{\pa^2 h}{\pa u_i\pa u_\ell}(v)\na v_\ell \\
  &\phantom{xx}- \sum_{k,\ell=1}^N
  \frac{\pa^3 h}{\pa u_i\pa u_k\pa u_\ell}(v)
  (u_k-v_k)\na v_\ell - \sum_{k=1}^N\frac{\pa^2 h}{\pa u_i\pa u_k}(v)
  \na(u_k-v_k)\bigg\}\cdot\na\frac{\pa h}{\pa u_j}(v)dx \\
  &= -\sum_{i,j,\ell=1}^N\int_\Omega B_{ij}(v)\bigg\{
  \bigg(\frac{\pa^2 h}{\pa u_i\pa u_\ell}(u)
  - \frac{\pa^2 h}{\pa u_i\pa u_\ell}(v)\bigg)\na(u_\ell-v_\ell) \\
  &\phantom{xx}+ \bigg(\frac{\pa^2 h}{\pa u_i\pa u_\ell}(u)
  - \frac{\pa^2 h}{\pa u_i\pa u_\ell}(v)
  - \sum_{k=1}^N\frac{\pa^3 h}{\pa u_i\pa u_\ell\pa u_k}(v)
  (u_k-v_k)\bigg)\na v_\ell\bigg\}\cdot\na\frac{\pa h}{\pa u_j}(v)dx.
\end{align*}
The Taylor expansion
\begin{align*}
  \bigg|\frac{\pa^2 h}{\pa u_i\pa u_\ell}(u)
  - \frac{\pa^2 h}{\pa u_i\pa u_\ell}(v)
  - \sum_{k=1}^N\frac{\pa^3 h}{\pa u_i\pa u_k\pa u_\ell}(v)
  (u_k-v_k)\bigg| \le C|u-v|^2
\end{align*}
then leads to
\begin{align*}
  I_2 &\le C\int_\Omega|u-v||\na(u-v)||\na v|dx
  + C\int_\Omega|u-v|^2|\na v|^2 dx \\
  &\le \delta\int_\Omega|\na(u-v)|^2 dx 
  + C(\delta)\int_\Omega|u-v|^2|\na v|^2 dx.
\end{align*}
We estimate $I_3$ by means of the Young inequality with $\delta>0$:
\begin{align*}
  I_3 &\le \delta\sum_{i=1}^N\int_\Omega
  \bigg|\na\bigg(\frac{\pa h}{\pa u_i}(u)
  - \frac{\pa h}{\pa u_i}(v)\bigg)\bigg|^2 dx
  + C(\delta)\sum_{i,j=1}^n\int_\Omega|B_{ij}(u)-B_{ij}(v)|^2
  \bigg|\na\frac{\pa h}{\pa u_j}(v)\bigg|^2 dx.
\end{align*}
The first term on the right-hand side is estimated according to
\begin{align*}
  \delta&\sum_{i=1}^N\int_\Omega
  \bigg|\na\bigg(\frac{\pa h}{\pa u_i}(u)
  - \frac{\pa h}{\pa u_i}(v)\bigg)\bigg|^2 dx \\
  &= \delta\sum_{i=1}^N\int_\Omega\bigg|\sum_{k=1}^N\bigg(
  \frac{\pa^2 h}{\pa u_i\pa u_k}(u) - \frac{\pa^2 h}{\pa u_i\pa u_k}(v)\bigg)\na v_k + \sum_{k=1}^N
  \frac{\pa^2 h}{\pa u_i\pa u_k}(u)\na(u_k-v_k)\bigg|^2dx \\
  &\le \delta C\int_\Omega|u-v|^2|\na v|^2 dx
  + \delta C\int_\Omega|\na(u-v)|^2 dx.
\end{align*}
This yields, together with the Lipschitz continuity of $B_{ij}$, that
\begin{align*}
  I_3 \le \delta C\int_\Omega|\na(u-v)|^2 dx
  + C(\delta)\int_\Omega|u-v|^2|\na v|^2 dx.
\end{align*}
Finally, it follows from the boundedness and Lipschitz continuity assumptions on $r_i$ and Taylor expansion that 
\begin{align*}
  I_4+I_5 \le C\int_\Omega|u-v|^2 dx.
\end{align*}
Summarizing all estimations, we conclude from \eqref{3.I59} that
\begin{align*}
  \frac{d}{dt}\int_\Omega h(u|v)dx 
  &\le -\bigg(\frac{\lambda}{2} - \delta C\bigg)
  \int_\Omega|\na(u-v)|^2 dx \\
  &\phantom{xx}+ C\int_\Omega|u-v|^2 dx
  + C\int_\Omega|u-v|^2|\na v|^2 dx.
\end{align*}

The last term is estimated by using the Gagliardo--Nirenberg inequality similarly as in \cite[Sec.~5]{DeZa26}. We use H\"older's inequality with $2<q<2d/(d-2)$, $1/p+1/q=1/2$ (hence $p>d$) and then Gagliardo--Nirenberg's inequality with $\theta = d/2-d/q\in(0,1)$:
\begin{align}\label{2.GN}
  \int_\Omega|u-v|^2|\na v|^2 dx 
  &\le \bigg(\int_\Omega|u-v|^q dx\bigg)^{2/q}
  \bigg(\int_\Omega|\na v|^p dx\bigg)^{2/p} \\
  &\le C\big(\|\na(u-v)\|_{L^2(\Omega)}^{2\theta}
  \|u-v\|_{L^2(\Omega)}^{2(1-\theta)} + \|u-v\|_{L^2(\Omega)}^2\big)
  \|\na v\|_{L^p(\Omega)}^2 \nonumber \\
  &\le \delta\|\na(u-v)\|_{L^2(\Omega)}^2
  + C\|u-v\|_{L^2(\Omega)}^2\big(1 + \|\na v\|_{L^p(\Omega)}^2\big).
  \nonumber 
\end{align}
We infer from the choice $0<\delta\le\lambda/(2C)$ and inequality \eqref{3.h2} that
\begin{align*}
  \frac{d}{dt}\int_\Omega h(u|v)dx 
  &\le -\bigg(\frac{\lambda}{2} - \delta C\bigg)
  \|\na(u-v)\|_{L^2(\Omega)}^2
  + C\big(1 + \|\na v\|_{L^p(\Omega)}^2\big)\|u-v\|_{L^2(\Omega)}^2 \\
  &\le C\big(1 + \|\na v\|_{L^p(\Omega)}^2\big)
  \int_\Omega h(u|v)dx.
\end{align*}
Since $v$ is a strong solution, we have $v_i\in L^2(0,T;L^p(\Omega))$ from Definition \ref{def.str}. Thus, we can apply Gronwall's inequality to find that $h(u(t)|v(t))\le C(T) h(u(0)|v(0))=0$. By inequalities \eqref{3.h2}, this gives $u(t)=v(t)$ in $\Omega$ for $t>0$.
\end{proof}


\section{Proof of Theorem \ref{thm.wsu3}}\label{sec.thm3}

We construct the glued entropy in a slightly different way than in Section \ref{sec.glued}. Then we show the Lipschitz continuity of certain nonlinearities with factor $u_i^{m_i-2}$ and finally prove Theorem \ref{thm.wsu3}. 

\subsection{Construction of the glued entropy density}\label{sec.glued2}

We define here the glued entropy density as
\begin{align}\label{3.heps}
  h_\eps(u) = h_1(u_1) + \sum_{i=2}^n h_{\eps,i}(u_i)
  + \sum_{i,j=n+1}^N b_{ij}u_iu_j
\end{align}
for $u\in\mathcal{D}$, where $h_{\eps,i}$ is defined in \eqref{2.heps} and $\mathcal{D}$ is given in Assumption (A1). 

%
%

\begin{lemma}
Let $h_\eps$ be the glued entropy density defined in \eqref{3.heps}. There exists $c>0$ such that for all $z\in\R^N$ with $\sum_{i=1}^nz_i=0$, 
\begin{align*}
  z^Th_\eps''(u)A(u)z\ge c\sum_{i=2}^N z_i^2
  \quad\mbox{for all }u\in\mathcal{D}.
\end{align*}  
\end{lemma}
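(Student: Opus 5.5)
The plan is to adapt the proof of Lemma \ref{lem.posdef} (volume-filling part) to the new glued entropy \eqref{3.heps}. There $h_1$ is not regularized, and coercivity only controls $z_2,\ldots,z_N$ via Assumption (A5)$_3$. Fix $u\in\mathcal D$ and $z\in\R^N$ with $\sum_{i=1}^n z_i=0$. With $\gamma>0$ and $\eps<\eps_\gamma$ chosen later, let
\begin{align*}
  S_\eps=\{1\}\cup\{i\in\{2,\ldots,n\}:u_i>2\eps\}\cup\{n+1,\ldots,N\}.
\end{align*}
On $S_\eps$ we have $h''_{\eps,i}(u_i)=h_i''(u_i)$, and index $1$ always belongs to $S_\eps$ because $h_1$ is unregularized. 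Decompose $z^Th''_\eps(u)A(u)z=J_1+J_2+J_3+J_4$ exactly as in \eqref{2.aux}, with the sets now defined via $S_\eps$. Because row $i$ of $h''_\eps A$ involves only $h''_{\eps,i}$ (and $b_{ik}$ for $i>n$), $J_1$ equals the quadratic form of $h''(u)A(u)$ restricted to the indices in $S_\eps$.

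\textbf{Bound on $J_1$.} Apply (A5)$_3$ to the vector $\tilde z$ that agrees with $z$ on $S_\eps$ and vanishes elsewhere:
\begin{align*}
  J_1\ge \lambda\sum_{i\in S_\eps,\,i\ge2}z_i^2-\kappa\Big(\sum_{i\in S_\eps,\,i\le n}z_i\Big)^2 .
\end{align*}
The constraint gives $\sum_{i\in S_\eps,\,i\le n}z_i=-\sum_{i\notin S_\eps}z_i$. Hence the defect term is bounded by $\kappa\, n\sum_{i\notin S_\eps}z_i^2$. This is slightly better than the Young-type splitting used in Lemma \ref{lem.posdef}, and it avoids $z_1$ entirely.

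\textbf{Bounds on $J_2,J_3,J_4$.} These terms are handled as in Lemma \ref{lem.posdef}, using $h''_{\eps,i}|a_{ij}|\le C$ from (A3) together with \eqref{2.hpp}, and the lower bound $c_A\gamma\sum_{i\notin S_\eps}z_i^2$ coming from the diagonal part of $J_4$. The one new feature is that index $1$ appears in $J_3$ and $J_4$ (as row $i=1$ or column $j=1$). Every such cross term has the form $z_iz_j$ with at least one index outside $S_\eps$. When $z_1$ occurs, I would eliminate it with $z_1=-\sum_{i=2}^n z_i$, so that $|z_1|^2\le (n-1)\sum_{i=2}^n z_i^2$. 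Young's inequality then gives $-\delta\sum_{i\ge2}z_i^2-C(\delta)\sum_{i\notin S_\eps}z_i^2$.

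\textbf{Main obstacle: row $i=1$.} The entries $h_1''(u_1)A_{1j}(u)=u_1^{m_1-2}A_{1j}(u)$ must be bounded, since $h_1$ has no regularization to absorb a singularity at $u_1=0$. Here I would invoke the part of (A5)$_3$ asserting that $u\mapsto u_1^{m_1-2}A_{1j}(u)$ is Lipschitz continuous, hence bounded, on $\overline{\mathcal D}$. This makes row $1$ harmless even for $j\notin S_\eps$.

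\textbf{Conclusion.} Collecting the bounds gives
\begin{align*}
  z^Th''_\eps A z\ge(\lambda-C\delta)\sum_{i\in S_\eps,\,i\ge2}z_i^2+(c_A\gamma-C(\delta,\kappa))\sum_{i\notin S_\eps}z_i^2 .
\end{align*}
Choose $\delta$ small, then $\gamma$ large, and hence $\eps$ small. This yields the claim with $c=\min\{\lambda/2,\lambda/2\}=\lambda/2$. Every index outside $S_\eps$ lies in $\{2,\ldots,n\}$, so the right-hand side indeed controls $\sum_{i=2}^N z_i^2$.
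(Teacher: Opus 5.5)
Your proposal is correct and follows essentially the paper's proof. It uses the same set $S_\eps$ containing index $1$ and the same block decomposition (the paper only lists the row-$1$ and column-$1$ cross terms separately as $J_9,J_{10}$); the same use of $\sum_{i=1}^n z_i=0$ to turn the $\kappa$-defect into $\kappa n\sum_{i\notin S_\eps}z_i^2$; and the same elimination $z_1^2\le (n-1)\sum_{i=2}^n z_i^2$. The one minor difference is that you bound the row-$1$ entries $u_1^{m_1-2}A_{1j}(u)$, $j\neq 1$, via the Lipschitz hypothesis in (A5)$_3$, whereas the paper gets this boundedness directly from the growth bound $|a_{1j}(u)|\le C_A u_1^{2-m_1}$ in (A3).
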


\begin{proof}
The proof is similar to that one of Lemma \ref{lem.posdef}, but since we treat the first component differently, we need to take into account additional terms originating from that component. For some fixed $u\in\mathcal{D}$, we introduce the set
\begin{align*}
  S_\eps = \{1\}\cup\{i\in\{2,\ldots,n\}:u_i\ge 2\eps\}
  \cup\{n+1,\ldots,N\}.
\end{align*}
Let $z\in\R^N$ be such that $\sum_{i=1}^nz_i=0$. Then $z^Th''_\eps(u)A(u)z = J_5+\cdots+J_{10}$, where
\begin{align*}
  J_5 &= \sum_{i,j\in S_\eps}z_ih_{\eps,i}''(u_i)A_{ij}(u)z_j, 
  && J_6 = \sum_{i=n+1}^N\sum_{\substack{j=2, \\ u_j\le 2\eps}}^n
  z_i(h_\eps''(u)A(u))_{ij}z_j, \\
  J_7 &= \sum_{\substack{i=2, \\ u_i > 2\eps}}^n
  \sum_{\substack{j=2, \\ u_j\le 2\eps}}^n
  z_ih''_{\eps,i}(u_i)A_{ij}(u)z_j, 
  && J_8 = \sum_{\substack{i=2, \\ u_i\le 2\eps}}^n\sum_{j=2}^N
  z_i (h''_\eps(u)A(u))_{ij}z_j, \\
  J_{9} &= \sum_{\substack{j=2, \\ u_j\le 2\eps}}^n
  z_1 (h''_{\eps,1}(u)A_{1j}(u)z_j,
  && J_{10} = \sum_{\substack{i=2, \\ u_i\le 2\eps}}^n
  z_i h''_{\eps,i}(u)A_{i1}(u)z_1.
\end{align*}
The terms $J_5,\ldots,J_8$ correspond to $J_1,\ldots,J_4$ from the proof of Lemma \ref{lem.posdef}, while the terms $J_{9}$ and $J_{10}$ take into account the first component. Therefore, estimating as in the proof of Lemma \ref{lem.posdef} but excluding the first component, we obtain 
\begin{align*}
  J_5 &\ge \lambda\sum_{i\in S_\eps\setminus\{1\}}z_i^2
  - \kappa\bigg(\sum_{\substack{i\in S_\eps,\\ 1\le i\le n}}
  z_i\bigg)^2, \\
  J_6 &\ge -\frac{\lambda}{8}\sum_{i=n+1}^N z_i^2
  - C(\lambda)\sum_{\substack{j=2, \\ u_j\le 2\eps}}^n z_j^2, \\
  J_7 &\ge -\frac{\lambda}{8}\sum_{\substack{i=2, \\ u_i>2\eps}}^n z_i^2
  - C(\lambda)\sum_{\substack{j=2, \\ u_j\le 2\eps}}^n z_j^2, \\
  J_8 &\ge c_A\gamma\sum_{\substack{i=2, \\ u_i\le 2\eps}}^n z_i^2
  - \frac{\lambda}{8}\sum_{j=2}^Nz_j^2
  - C(\lambda)\sum_{\substack{i=2, \\ u_i\le 2\eps}}^n z_i^2.
\end{align*} 
The remaining terms $J_{9}$ and $J_{10}$ are estimated similarly as $J_3$, using the property $|a_{ij}(u)|$ $\times h''_{\eps,i}(u_i)\le C$ that originates from the growth condition of $a_{ij}$ in Assumption (A4):
\begin{align*}
  J_{9}+J_{10} \ge -\sum_{\substack{j=2, \\ u_j\le 2\eps}}^n|z_1z_j|
  - \sum_{\substack{i=2, \\ u_i\le 2\eps}}^n|z_iz_1|
  \ge -\frac{\lambda}{8n}|z_1|^2 
  - C(\lambda)\sum_{\substack{i=2, \\ u_i\le 2\eps}}^n z_i^2.
\end{align*}
We combine these estimates to find that
\begin{align*}
  z^Th''_\eps(u)A(u)z
  &\ge \lambda\sum_{i\in S_\eps\setminus\{1\}}z_i^2
  - \kappa\bigg(\sum_{\substack{i\in S_\eps,\\ 1\le i\le n}}z_i\bigg)^2
  + (c_A\gamma - C(\lambda))
  \sum_{\substack{i=2, \\ u_i\le 2\eps}}^n z_i^2 \\
  &\phantom{xx}- \frac{\lambda}{8}\bigg(\sum_{i=n+1}^N z_i^2
  + \sum_{i=2}^N z_i^2 
  + \sum_{\substack{i=2, \\ u_i>2\eps}}^n z_i^2\bigg)
  - \frac{\lambda}{8n}z_1^2.
\end{align*}
Using $z_1=-\sum_{i=2}^n z_i$, the second and last terms on the right-hand side are estimated as
\begin{align*}
  & \bigg(\sum_{\substack{i\in S_\eps,\\ 1\le i\le n}}z_i\bigg)^2
  = \bigg(\sum_{\substack{i\in S_\eps,\\ 2\le i\le n}}z_i + z_1\bigg)^2
  = \bigg(\sum_{\substack{i=2,\\ u_i\ge 2\eps}}^n z_i
  - \sum_{i=2}^n z_i\bigg)^2
  = \bigg(\sum_{\substack{i=2,\\ u_i < 2\eps}}^n z_i\bigg)^2
  \le n\sum_{\substack{i=2,\\ u_i\le 2\eps}}^n z_i^2, \\
  &\mbox{and}\quad \frac{\lambda}{8n}z_1^2 
  = \frac{\lambda}{8n}\bigg(\sum_{i=2}^n z_i\bigg)^2
  \le \frac{\lambda}{8}\sum_{i=2}^n z_i^2.
\end{align*}
Choosing $\gamma>0$ sufficiently large such that $c_A\gamma-C(\lambda)-\kappa n\ge \lambda$, we infer that
\begin{align*}
  z^Th''_\eps(u)A(u)z
  &\ge \lambda\sum_{i\in S_\eps\setminus\{1\}}z_i^2
  + \big(c_A\gamma - C(\lambda) - \kappa n\big)
  \sum_{\substack{i=2, \\ u_i\le 2\eps}}^n z_i^2
  - \frac{\lambda}{2}\sum_{i=2}^N z_i^2 \\
  &\ge \lambda\sum_{i\in S_\eps\setminus\{1\}}z_i^2
  + \lambda\sum_{\substack{i=2, \\ u_i\le 2\eps}}^n z_i^2
  - \frac{\lambda}{2}\sum_{i=2}^N z_i^2
  = \frac{\lambda}{2}\sum_{i=2}^N z_i^2.
\end{align*}
This finishes the proof.
\end{proof}


\subsection{Reduction to $N-1$ variables}

As in the proof of Theorem \ref{thm.wsu2}, we eliminate the variable $u_1=1-\sum_{i=2}^n u_i$ and formulate the problem in the reduced variable $\bar{u}=(u_2,\ldots,u_N)$. For this, recall the definition
\begin{align*}
  \bar{h}_\eps(\bar{u}) = h_\eps\bigg(1-\sum_{i=2}^n u_i,
  u_2,\ldots,u_N\bigg) \quad\mbox{for }\bar{u}\in\overline{\mathcal{D}}
\end{align*}
and the reduced diffusion matrix $\bar{A}\in\R^{(N-1)\times(N-1)}$, defined in Lemma \ref{lem.posdef}. We define $H(u)$, $Q\in\R^{(N-1)\times(N-1)}$ by
\begin{align*}
  H_{ij}(u) := \frac{\pa^2 h_\eps}{\pa u_i\pa u_j}(u)
  \mbox{ for }i,j=2,\ldots,n, \quad
  Q = \begin{cases}
  1 &\mbox{if }i,j=2,\ldots,n, \\ 0 &\mbox{else}.
\end{cases}
\end{align*}
Furthermore, we set $\mathbb{I}\in\R^{N-1}$ with $\mathbb{I}_i=1$ if $i=2,\ldots,n$ and $\mathbb{I}_i=0$ for $i=n+1,\ldots,N$. With these definitions, we can write
\begin{align*}
  \bar{h}_\eps''(\bar{u}) = H(u) + u_1^{m_1-2}Q.
\end{align*}

\begin{lemma}
The matrices $\bar{H}^*(\bar{u})=\bar{h}_\eps''(\bar{u})^{-1}$ and $B(u) = \bar{A}(\bar{u})\bar{h}''_\eps(\bar{u})^{-1}$ as well as the mappings
\begin{align}\label{3.map}
  \bar{u}\mapsto u_1^{m_1-2}\sum_{k=2}^n
  \bar{H}^*_{kj}(\bar{u}), \quad
  \bar{u}\mapsto u_1^{m_1-2}\sum_{k=2}^nB_{kj}(u)
\end{align}
for $j=1,\ldots,N$ are Lipschitz continuous in $\overline{\mathcal{D}}$, with the restriction that for different arguments $\bar{u}\neq\bar{v}$ in the Lipschitz inequality, we require that $v_1\ge c>0$ for some $c>0$.
\end{lemma}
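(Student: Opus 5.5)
The key structural fact is the decomposition $\bar h_\eps''(\bar u)=H(u)+u_1^{m_1-2}Q$. Here $H(u)$ is block diagonal: its first block is $\operatorname{diag}(h''_{\eps,2}(u_2),\ldots,h''_{\eps,n}(u_n))$ and its second block is $(b_{ij})$. The perturbation $u_1^{m_1-2}Q$ is rank one, $Q=\mathbb{I}\mathbb{I}^T$. So $H(u)$ is uniformly positive definite and, by the gluing in \eqref{2.hpp}, smooth and bounded on $\overline{\mathcal D}$, with bounded, Lipschitz inverse $H^{-1}$. The only singular quantity is $s:=u_1^{m_1-2}$, which blows up as $u_1\to0$. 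I would invert exactly with the Sherman--Morrison formula:
\begin{align*}
  \bar H^*(\bar u) = H^{-1} - \frac{s\,H^{-1}\mathbb{I}\mathbb{I}^TH^{-1}}{1+s\,\mathbb{I}^TH^{-1}\mathbb{I}}.
\end{align*}
Since $\mathbb{I}^TH^{-1}\mathbb{I}\ge c>0$, the map $s\mapsto s/(1+s\mathbb{I}^TH^{-1}\mathbb{I})$ is bounded and smooth on $[0,\infty]$. It converges to $1/\mathbb{I}^TH^{-1}\mathbb{I}$ as $s\to\infty$, and in the variable $t=1/s=u_1^{2-m_1}$ it equals $1/(t+\mathbb{I}^TH^{-1}\mathbb{I})$.

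\emph{Lipschitz continuity of $\bar H^*$.} Writing $\bar H^*$ as a smooth function of $(H^{-1}(u),t)$ with $t=u_1^{2-m_1}\in[0,1]$, Lipschitz continuity follows once $t$ is Lipschitz in $\bar u$. This holds for $m_1=1$, since then $t=u_1=1-\sum_{i\ge2}u_i$. For $1<m_1<2$ the map $u_1\mapsto u_1^{2-m_1}$ is only Hölder at $0$, which is where the restriction $v_1\ge c$ enters. If one argument has $v_1\ge c$, then $|u_1^{2-m_1}-v_1^{2-m_1}|\le C(c)|u_1-v_1|$ by the mean value theorem, applied on the side where $v_1$ is bounded away from $0$. (If $u_1<c/2$, then $|u_1-v_1|\ge c/2$ and the bound is trivial by boundedness.) For $B=\bar A\bar H^*$ I would combine this with the Lipschitz continuity and boundedness of $\bar A$ from (A3).

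\emph{The weighted maps \eqref{3.map}.} Compute $\mathbb{I}^T\bar H^*$, i.e.\ the column sums over $k=2,\ldots,n$. By Sherman--Morrison,
\begin{align*}
  \mathbb{I}^T\bar H^* = \frac{\mathbb{I}^TH^{-1}}{1+s\,\mathbb{I}^TH^{-1}\mathbb{I}},
\end{align*}
so
\begin{align*}
  s\,\mathbb{I}^T\bar H^* = \frac{\mathbb{I}^TH^{-1}}{t+\mathbb{I}^TH^{-1}\mathbb{I}},
\end{align*}
which is again a smooth function of $(H^{-1},t)$ with denominator bounded below. Lipschitz continuity then follows exactly as above. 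For $s\sum_k B_{kj}=s\,\mathbb{I}^T\bar A\bar H^*$, I would use the column-sum structure of $\bar A$. Writing $\bar A\bar H^*=\bar A H^{-1}-\bar A H^{-1}\mathbb{I}\,(s\,\mathbb{I}^T\bar H^*)$, the second term is fine, being Lipschitz times bounded Lipschitz. The first term needs $s\,\mathbb{I}^T\bar AH^{-1}$ to be Lipschitz. Since $\sum_{i=1}^nA_{ij}=0$, one has $\sum_{k=2}^n\bar A_{kj}=-A_{1j}$ (up to the $A_{11}$ correction in the definition of $\bar A$), so $s\,\mathbb{I}^T\bar A$ is a combination of the entries $u_1^{m_1-2}A_{1j}(u)$. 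These are Lipschitz by Assumption (A5)$_3$, and multiplying by the bounded Lipschitz $H^{-1}$ preserves this.

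The main obstacle is this last step: tracking precisely which entries of $\bar A$ appear in $\mathbb{I}^T\bar A$, including the subtracted column $A_{i1}$. Both $\sum_{k\ge2}A_{kj}$ and $\sum_{k\ge2}A_{k1}$ must reduce to multiples of $A_{1\cdot}$ so that the weight $u_1^{m_1-2}$ is absorbed by the hypothesis in (A5)$_3$. A secondary point is handling the Hölder behaviour of $t$ near $u_1=0$ via the restriction $v_1\ge c$.
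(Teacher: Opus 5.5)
\textbf{Gap in the second weighted map.} Your treatment of $\bar H^*$, of $B=\bar A\bar H^*$, and of the first weighted map follows the paper. Sherman--Morrison gives the same explicit inverse $(H+u_1^{m_1-2}Q)^{-1}(u_1^{m_1-2}\mathbb I)=H^{-1}\mathbb I/(u_1^{2-m_1}+\mathbb I^TH^{-1}\mathbb I)$, and you handle $t=u_1^{2-m_1}$ under $v_1\ge c$ in the same way.

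The problem is the second weighted map. Left-multiply your split $\bar A\bar H^*=\bar AH^{-1}-\bar AH^{-1}\mathbb I\,(s\,\mathbb I^T\bar H^*)$ by $s\,\mathbb I^T$. Both resulting pieces contain $s\,\mathbb I^T\bar A$, whose entries for $\ell=2,\ldots,n$ are $s(A_{11}-A_{1\ell})$. So your argument needs $u_1^{m_1-2}A_{11}$ to be Lipschitz, and this is not available:
\begin{itemize}
\item Under (A3), $A_{11}=a_1+a_{11}$ with $a_1\ge c_A$ and $|a_{11}|\le C_Au_1^{2-m_1}$. Hence $u_1^{m_1-2}A_{11}\ge c_Au_1^{m_1-2}-C_A$, which is unbounded on $\overline{\mathcal D}$.
\item In the volume-filling chemotaxis example, $u_1^{-1}A_{11}=(q_1+f_1)(1-u_1)/u_1$ is unbounded too, unless $q_1+f_1\equiv0$.
\end{itemize}
The Lipschitz condition on $u_1^{m_1-2}A_{1j}$ in (A5)$_3$ therefore cannot be meant for $j=1$, and the paper uses it only for $j\ge2$.

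In your split the two pieces are each unbounded; only their difference is bounded. Because $st=1$, their divergent parts combine to $A_{11}\,\mathbb I^TH^{-1}/(t+\mathbb I^TH^{-1}\mathbb I)$. So a termwise Lipschitz estimate cannot close.

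\textbf{The fix.} Keep $A_{11}$ unweighted and put the weight on $\bar H^*$, as the paper does. Using the vanishing column sums,
\begin{align*}
  u_1^{m_1-2}\sum_{k=2}^n B_{kj}(u)
  = -\sum_{\ell=2}^N\big(u_1^{m_1-2}A_{1\ell}(u)\big)\bar H^*_{\ell j}(\bar u)
  + A_{11}(u)\bigg(u_1^{m_1-2}\sum_{\ell=2}^n\bar H^*_{\ell j}(\bar u)\bigg).
\end{align*}
The first sum involves only $\ell\ge2$, which (A5)$_3$ covers, multiplied by the bounded Lipschitz matrix $\bar H^*$. The second term is the bounded Lipschitz $A_{11}$ times the first weighted map, which you have already controlled. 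Like your own computation, this identity also uses $\sum_{i=1}^nA_{i\ell}=0$ for $\ell>n$. With this regrouping your proof goes through.
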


\begin{proof}
To prove the Lipschitz continuity of the first mapping in \eqref{3.map}, we observe that it is sufficient to show the Lipschitz continuity of
\begin{align*}
  \bar{H}^*(\bar{u})(u_1^{m_1-2}\mathbb{I})
  = \big(H(u)+u_1^{m_1-2}Q\big)^{-1}(u_1^{m_1-2}\mathbb{I}).
\end{align*}
The inverse can be computed explicitly:
\begin{align*}
  \big(H(u)+u_1^{m_1-2}Q\big)^{-1}(u_1^{m_1-2}\mathbb{I})
  = \frac{H(u)^{-1}\mathbb{I}}{u_1^{2-m_1}
  + \mathbb{I}^TH(u)^{-1}\mathbb{I}}.
\end{align*}
We claim that this expression is Lipschitz continuous. Indeed, the matrix $H(u)$ is positive definite uniformly in $u$. Therefore, the inverse $H(u)^{-1}$ is uniformly bounded and
\begin{align*}
  |H(u)^{-1}-H(v)^{-1}| &\le |H(u)^{-1}||I-H(u)H(v)^{-1}|| \\
  &\le C|(H(v)-H(u))H(v)^{-1}| \le C|u-v|.
\end{align*}
Let $u_1\ge 0$, $v_1\ge c>0$, and let $a>0$ be any number. Then, by a Taylor expansion and using the condition $m_i\le 2$,
\begin{align*}
  \bigg|\frac{1}{u_1^{2-m_1}+a}-\frac{1}{v_1^{2-m_1}+a}\bigg|
  &= \bigg|(2-m_1)\int_0^1\frac{(u_1+(v_1-u_1)\theta)^{1-m_1}}{
  [(u_1+(v_1-u_1)\theta)^{2-m_1} + a]^2}(v_1-u_1)d\theta\bigg| \\
  &\le C|u_1-v_1|\int_0^1\frac{(c\theta)^{1-m_1}}{a^2}d\theta
  \le C(a)|u_1-v_1|.
\end{align*}
As the inverse $H(u)^{-1}$ is uniformly positive definite, we can estimate $\mathbb{I}^TH(u)^{-1}\mathbb{I}\ge a>0$ for some $a>0$. Therefore,
\begin{align*}
  \bigg|&\frac{1}{u_1^{2-m_1} + \mathbb{I}^TH(u)^{-1}\mathbb{I}}
  - \frac{1}{v_1^{2-m_1} + \mathbb{I}^TH(v)^{-1}\mathbb{I}}\bigg| \\
  &\le \bigg|\frac{1}{u_1^{2-m_1} + \mathbb{I}^TH(u)^{-1}\mathbb{I}}
  - \frac{1}{u_1^{2-m_1} + \mathbb{I}^TH(v)^{-1}\mathbb{I}}\bigg| \\
  &\phantom{xx}
  + \bigg|\frac{1}{u_1^{2-m_1} + \mathbb{I}^TH(v)^{-1}\mathbb{I}}
  - \frac{1}{v_1^{2-m_1} + \mathbb{I}^TH(v)^{-1}\mathbb{I}}\bigg|
  \le C|u-v| + C|u_1-v_1|.
\end{align*}
This proves the claim and the Lipschitz continuity of the first mapping in \eqref{3.map}. 

Next, we consider the matrix $\bar H^*(\bar u)$. Let $v\in\R^{N-1}$. We write
\begin{align*}
  \bar H^*(\bar u)v &= (H(u)+u_1^{m_1-2}Q)^{-1}v \\
  &= (H(u)+u_1^{m_1-2}Q)^{-1}
  \big((H(u)+u_1^{m_1-2}Q) - u_1^{m_1-2}Q\big)H(u)^{-1}v \\
  &= H(u)^{-1}v - (H(u)+u_1^{m_1-2}Q)^{-1}(u_1^{m_1-2}QH(u)^{-1})v.
\end{align*}
We have proved in the first part that $u\mapsto u_1^{m_1-2}QH(u)^{-1}$ is Lipschitz continuous. Moreover, $u\mapsto H(u)^{-1}$ is Lipschitz continuous and $\bar H^*(\bar u)$ is bounded. Therefore, $u\mapsto \bar H^*(\bar u)$ is Lipschitz continuous. 

We verify the Lipschitz continuity of the second mapping in \eqref{3.map}. By definition of $B(u)$ and $\bar A(u)$ as well as $\sum_{i=2}^n A_{ik}(u) = - A_{1k}(u)$ from Assumption (A5)$_3$, we have
\begin{align*}
  u_1^{m_1-2}\sum_{k=2}^nB_{kj}(u)
  &= u_1^{m_1-2}\sum_{i=2}^n\sum_{k=2}^N A_{ik}(u)\bar H^*_{kj}(\bar u)
  - u_1^{m_1-2}\sum_{i,k=2}^n A_{i1}(u)\bar H^*_{kj}(\bar u) \\
  &= -u_1^{m_1-2}\sum_{k=2}^N A_{1k}(u)\bar H^*_{kj}(\bar u)
  + A_{11}(u)\bigg(u_1^{m_1-2}\sum_{k=2}^n\bar H^*_{kj}(\bar u)\bigg).
\end{align*}
Again by Assumption (A5)$_3$, the function $u\mapsto u_1^{m_1-2}A_{1k}(u)$ is Lipschitz continuous, and we have shown this property for the first mapping in \eqref{3.map}. Hence, the second mapping in \eqref{3.map} is Lipschitz continuous as well.
\end{proof}


\subsection{Weak--strong uniqueness proof} 

We use the relative entropy method already used in the proof of Theorems \ref{thm.wsu1} and \ref{thm.wsu2}. To simplify the presentation, we first consider the case $r(u)=0$, explaining the general case later. The idea is to use the test function $\pa \bar{h}_\eps/\pa u_i$ in the weak formulation of the equations satisfied by the weak solution $\bar{u}$ and the strong solution $\bar{v}$. Thanks to the glued entropy, this can be done for $i=2,\ldots,N$, but we cannot argue in the same way for $i=1$. Therefore, we need to regularize. We use the test function $h_1'(u_1+\eta)$ for $\eta>0$, giving
\begin{align*}
  \int_\Omega h_1(u_1(t)+\eta)dx 
  + \sum_{j=1}^N\int_0^t\int_\Omega (u_1+\eta)^{m_1-2}
  A_{1j}(u)\na u_1\cdot\na u_j dxds
  = \int_\Omega h_1(u_1^0+\eta)dx.
\end{align*} 
The condition $|u_1^{m_1-2}a_{1j}(u)|\le C$ from Assumption (A5)$_3$ gives a uniform bound for the last term on the left-hand side, and the $L^\infty(\Omega_T)$ bound for $u$ provides a bound for the remaining terms. Thus, we can perform the limit $\eta\to 0$ using dominated convergence. Taking into account the equations for $i=2,\ldots,N$, we obtain
\begin{align*}
  \int_\Omega h_\eps(u(t))dx
  + \sum_{i,j=1}^N\int_0^t\int_\Omega(h_\eps''(u)A(u))_{ij}
  \na u_i\cdot\na u_j dx = \int_\Omega h_\eps(u^0)dxds.
\end{align*}
Repeating the arguments of the proof of Theorem \ref{thm.wsu4} leading to \eqref{3.I59} but separating the component $u_1$, we find that
\begin{align}\label{3.I1015}
  \frac{d}{dt}\int_\Omega\bar h_\eps(u|v)dx = I_{6}+\cdots+I_{10}, 
\end{align}
where
\begin{align*}
  I_{6} & = -\sum_{i,j=2}^N\int_\Omega B_{ij}(u)
  \na\bigg(\frac{\pa\bar h_\eps}{\pa u_i}(\bar u)
  - \frac{\pa\bar h_\eps}{\pa u_i}(\bar v)\bigg)
  \cdot\na\bigg(\frac{\pa\bar h_\eps}{\pa u_j}(\bar u)
  - \frac{\pa\bar h_\eps}{\pa u_j}(\bar v)\bigg)dx \nonumber \\
  I_{7} &= -\sum_{i,j=2}^N\int_\Omega B_{ij}(v)
  \na\bigg(\frac{\pa\bar h_\eps}{\pa u_i}(\bar u) 
  - \frac{\pa\bar h_\eps}{\pa u_i}(\bar v)
  - \sum_{k=2}^N\frac{\pa^2\bar h_\eps}{\pa u_i\pa u_k}(\bar v)
  (u_k-v_k)\bigg)
  \cdot\na\frac{\pa\bar h_\eps}{\pa u_j}(\bar v)dx, \nonumber \\
  I_{8} &= -\sum_{i,j=2}^N\int_\Omega(B_{ij}(u)-B_{ij}(v))
  \na\bigg(\frac{\pa\bar h_\eps}{\pa u_i}(\bar u) 
  - \frac{\pa\bar h_\eps}{\pa u_i}(\bar v)\bigg)
  \cdot\na\frac{\pa\bar h_\eps}{\pa u_j}(\bar v)dx, \nonumber \\ 
  I_{9} &= \sum_{i=2}^n\sum_{j=2}^N\int_\Omega B_{ij}(u)
  \na(h_{\eps,1}'(u_1)-h_{\eps,1}'(v_1))
  \cdot\na\frac{\pa\bar h}{\pa u_j}(\bar v)dx, \\
  I_{10} &= \sum_{i=2}^n\sum_{j=2}^N\int_\Omega B_{ij}(v)
  \na\bigg(v_1^{m_1-2}\sum_{k=2}^n(u_k-v_k)\bigg)\cdot
  \na\frac{\pa\bar h_\eps}{\pa u_j}(\bar v)dx.
\end{align*}
The terms $I_6,\ldots,I_8$ correspond to $I_1,\ldots,I_3$ in the proof of Theorem \ref{thm.wsu4}, while $I_9$ and $I_{10}$ account for the first component. We estimate $I_{6}$ by using Lemma \ref{lem.posdef2}:
\begin{align*}
  I_{6} \le -\lambda\sum_{i=2}^N\int_\Omega
  \bigg|\sum_{j=2}^N\bar H^*_{ij}(\bar u)\na
  \bigg(\frac{\pa\bar h_\eps}{\pa u_j}(\bar u)
  - \frac{\pa\bar h_\eps}{\pa u_j}(\bar v)\bigg)\bigg|^2dx.
\end{align*}
This term is reformulated in a slightly different way compared to the term $I_1$ in the proof of Theorem \ref{thm.wsu4}. First, we estimate for $i=2,\ldots,N$:
\begin{align*}
  |\na(u_i-v_i)|^2 &= \big|\big\{\bar{h}_\eps''(u)^{-1}
  \na(\bar h_\eps'(\bar u) - \bar h_\eps'(\bar v))
  + \bar{h}_\eps''(\bar u)^{-1}
  (\bar h_\eps''(\bar v)-\bar{h}''_\eps(\bar u))\na v\big\}_i\big|^2 \\
  &\le 2\big|\big\{\bar h_\eps''(\bar u)^{-1} 
  \na(\bar h_\eps'(\bar u) - \bar h_\eps'(\bar v))\big\}_i\big|^2
  + 2\big|\big\{\bar{h}_\eps''(\bar u)^{-1}
  (\bar h_\eps''(\bar v)-\bar{h}_\eps(\bar u))\na v\big\}_i\big|^2.
\end{align*}
Replacing the first term on the right-hand side, we arrive at
\begin{align}\label{3.I6}
  I_{6} &\le -\frac{\lambda}{2}\sum_{i=2}^N\int_\Omega
  |\na(u_i-v_i)|^2 dx \\
  &\phantom{xx}+ \lambda\sum_{i=2}^N\int_\Omega
  \bigg|\sum_{j,k=2}^N\bar H_{ij}^*(\bar u)\bigg(
  \frac{\pa\bar h_\eps}{\pa u_j\pa u_k}(\bar u)
  - \frac{\pa\bar h_\eps}{\pa u_j\pa u_k}(\bar v)\bigg)
  \na v_k\bigg|dx \nonumber \\
  &= -\frac{\lambda}{2}\sum_{i=2}^N\int_\Omega
  |\na(u_i-v_i)|^2 dx + \lambda \int_\Omega
  |\bar H^*(\bar u)(H(u)-H(v))\na v|dx \nonumber \\
  &\phantom{xx}+ \lambda \int_\Omega
  |(u_1^{m_1-2}-v_1^{m_1-2})\bar H^*(\bar u)Q\na v|dx. \nonumber 
\end{align}
The matrix $\bar h_\eps''(\bar u)=H(u)+u_1^{m_1-2}Q$ is bounded and positive definite uniformly in $u$. Therefore, its inverse $\bar H^*(\bar u)$ is bounded too, which leads to $|\bar H^*(\bar u)(H(u)-H(v))|\le \sum_{i=2}^N|u_i-v_i|$. We write the integrand of the last term in \eqref{3.I6} as the solution $W\in\R^{(N-1)\times(N-1)}$ to the equation
\begin{align*}
  (u_1^{m_1-2}-v_1^{m_1-2})Q = (H(u)+u_1^{m_1-2}Q)W.
\end{align*}
Then $W$ can be written as $W = (n-1)^{-1} u_1^{2-m_1} (u_1^{m_1-2}-v_1^{m_1-2})Q + R$, where $R\in\R^{(N-1)\times(N-1)}$ solves
\begin{align*}
  0 = \bar H(\bar u)
  \bigg(\frac{u_1^{2-m_1}}{n-1}(u_1^{m_1-2}-v_1^{m_1-2})Q 
  + R\bigg) + u_1^{m_1-2}QR
\end{align*}
or equivalently,
\begin{align*}
  R = -\frac{u_1^{2-m_1}}{n-1}(u_1^{m_1-2}-v_1^{m_1-2})
  (H(u)+u_1^{m_1-2}Q)^{-1} H(u)Q.
\end{align*}
It follows from the boundedness of the inverse and the condition $v_1\ge c>0$ that
\begin{align*}
  |W| &\le \bigg|\frac{u_1^{2-m_1}}{n-1}(u_1^{m_1-2}-v_1^{m_1-2})
  Q\bigg| + \bigg|\frac{u_1^{2-m_1}}{n-1}(u_1^{m_1-2}-v_1^{m_1-2})
  \bar H^*(\bar u)H(u)Q\bigg| \\
  &\le C(1+v_1^{m_1-2})|u_1^{2-m_1}-v_1^{2-m_1}|
  \le C(v_1)\sum_{i=2}^n|u_i-v_i|,
\end{align*}
where $C(v_1)$ depends on the infimum of $v_1$, which is positive (here, we use again $m_i\le 2$). This shows that
\begin{align*}
  I_{6} \le -\frac{\lambda}{2}\sum_{i=2}^N\int_\Omega
  |\na(u_i-v_i)|^2 dx
  + C(v_1)\sum_{i=2}^n\int_\Omega|u_i-v_i|^2|\na v|^2 dx.
\end{align*}

The terms $I_{7}$ and $I_{8}$ are estimated similarly as $I_2$ and $I_3$, respectively, in the proof of Theorem \ref{thm.wsu4}, leading to
\begin{align*}
  I_{7} &\le \delta\sum_{i=2}^N\int_\Omega|\na(u_i-v_i)|^2 dx
  + C(\delta)\sum_{i=2}^N\int_\Omega|u_i-v_i|^2|\na v|^2 dx, \\
  I_{8} &\le \delta C\sum_{i=2}^N\int_\Omega|u_i-v_i|^2
  (|\na v_1|^2 + |\na v_i|^2) dx
  + \delta C(v_1)\sum_{i=2}^N\int_\Omega|\na(u_i-v_i)|^2 dx.
\end{align*}
For the new terms $I_{9}$ and $I_{10}$, we use the identity $\sum_{k=2}^n(u_i-v_i)=-(u_1-v_1)$:
\begin{align*}
  I_{9} &+ I_{10} = \sum_{i=2}^n\sum_{j=2}^N\int_\Omega B_{ij}(u)
  (u_1^{m_1-2}\na u_1-v_1^{m_1-2}\na v_1)
  \cdot\na\frac{\pa\bar h_\eps}{\pa u_j}(\bar v)dx \\
  &\phantom{xx}+ \sum_{i=2}^n\sum_{j=2}^N\int_\Omega B_{ij}(v)
  \na\big(v_1^{m_1-2}(v_1-u_1)\big)\cdot\na
  \frac{\pa\bar h_\eps}{\pa u_j}(\bar v)dx \\
  &= \sum_{j=2}^N\int_\Omega\bigg(\na u_1 - \frac{u_1^{2-m_1}}{v_1^{2-m_1}}\na v_1\bigg)\sum_{i=2}^n
  \big(u_1^{m_1-2}B_{ij}(u)-v_1^{m_1-2}B_{ij}(v)\big)
  \cdot\na\frac{\pa\bar h_\eps}{\pa u_j}(\bar v)dx \\
  &\phantom{xx}- \sum_{j=2}^N\int_\Omega v_1^{2m_1-4}
  \big(u_1^{2-m_1} - (2-m_1)v_1^{1-m_1}(u_1-v_1) - v_1^{2-m_1}\big) \\
  &\phantom{xx}\times
  \sum_{j=2}^n B_{ij}(v)\na v_1\cdot
  \na \frac{\pa\bar h_\eps}{\pa u_j}(\bar v)dx.
\end{align*}
For the first term on the right-hand side, we use the Lipschitz continuity \eqref{3.map} of $u\mapsto u_1^{m_1-2}\sum_{i=2}^n B_{ij}(u)$ and Young's inequality, while we deduce from a Taylor estimate up to second order, the strict positivity of $v_1$, and the boundedness of $v$ for the second term that
\begin{align*}
  I_{9} &+ I_{10} \le
  \delta\bigg\|\na u_1 - \frac{u_1^{2-m_1}}{v_1^{2-m_1}}\na v_1
  \bigg\|_{L^2(\Omega)}^2 + C(\delta)\sum_{i,j=2}^N\int_\Omega
  |u_i-v_i|^2 \bigg|\na\frac{\pa\bar h_\eps}{\pa u_j}(\bar v)\bigg|^2 dx.
\end{align*} 
It follows from
\begin{align*}
  |a^{2-m_i}-b^{2-m_i}| &= (2-m_i)\bigg|\int_0^1(b+(a-b)\theta)^{1-m_i}
  (a-b)d\theta\bigg| \\
  &\le (2-m_i)\int_0^1((1-\theta)b)^{1-m_i}|a-b|d\theta 
  \le b^{1-m_i}|a-b|
\end{align*}
for $a\ge 0$ and $b>0$ that 
\begin{align*}
  \bigg|\na u_1 - \frac{u_1^{2-m_1}}{v_1^{2-m_1}}\na v_1\bigg|
  &\le |\na(u_1-v_1)| + v_1^{m_i-2}|u_1^{2-m_i}-v_1^{2-m_i}||\na v_1| \\
  &\le |\na(u_1-v_1)| + v_1^{-1}|u_1-v_1||\na v_1|,
\end{align*}
and consequently, because of the strict positivity of $v_1$ and of $u_1-v_1=-\sum_{i=2}^n(u_i-v_i)$,
\begin{align*}
  \bigg\|\na u_1 - \frac{u_1^{2-m_1}}{v_1^{2-m_1}}\na v_1
  \bigg\|_{L^2(\Omega)}^2
  &\le \|\na(u_1-v_1)\|_{L^2(\Omega)}^2 + C\|u_1-v_1\|_{L^2(\Omega)}^2
  \\
  &\le C\sum_{i=2}^n\|\na(u_i-v_i)\|_{L^2(\Omega)}^2
  + C\sum_{i=2}^n\|u_i-v_i\|_{L^2(\Omega)}^2.
\end{align*}
Furthermore, 
\begin{align*}
  \bigg|\na\frac{\pa\bar h_\eps}{\pa u_j}(v)\bigg|^2
  = \bigg|\sum_{k=2}^n(H_{jk}(v) + v_1^{m_1-2}Q_{jk})\na v_k\bigg|^2
  \le C\sum_{k=2}^n|\na v_k|^2 + C|\na v_1^{m_1-1}|^2.
\end{align*}
We conclude that
\begin{align*}
  I_9 + I_{10} &\le \delta C\sum_{i=2}^n\|\na(u_i-v_i)\|_{L^2(\Omega)}^2
  + \delta C\sum_{i=2}^n\|u_i-v_i\|_{L^2(\Omega)}^2 \\
  &\phantom{xx}+ C(\delta)\sum_{i=2}^n\int_\Omega|u_i-v_i|^2
  \bigg(\sum_{k=2}^n|\na v_k|^2 + |\na v_1^{m_1-1}|^2\bigg)dx.
\end{align*}
Estimating as in \eqref{2.GN}, using the Gagliardo--Nirenberg inequality, and collecting the previous inequalities, we find from \eqref{3.I1015} that  
\begin{align*}
  \frac{d}{dt}\int_\Omega \bar h_\eps(u|v)dx
  &\le -\bigg(\frac{\lambda}{2} - \delta C\bigg)
  \sum_{i=2}^N\int_\Omega|\na(u_i-v_i)|^2 dx \\
  &\phantom{xx}+ C\big(1+\|\na v\|_{L^p(\Omega)}^2
  + \|\na v_1^{m_1-1}\|_{L^p(\Omega)}^2\big)
  \sum_{i=2}^N\int_\Omega(u_i-v_i)^2 dx. 
\end{align*}
Choosing $\delta>0$ sufficiently small, the inequality $\bar h_\eps(u|v)\ge C\sum_{i=2}^N(u_i-v_i)^2$ yields
\begin{align*}
  \frac{d}{dt}\int_\Omega \bar h_\eps(u|v)dx
  \le C\big(1+\|\na v\|_{L^p(\Omega)}^2
  + \|\na v_1^{m_1-1}\|_{L^p(\Omega)}^2\big)\bar h_\eps(u|v).
\end{align*}
Our assumptions on $v$ allow us to apply the Gronwall inequality to deduce from $\bar h_\eps(u^0|v^0)=0$ that $\bar h_\eps(u(t)|v(t))=0$ and consequently $u(t)=v(t)$ in $\Omega$ for $t>0$.

It remains to consider the reaction rates. We need to estimate the following integrals associated to $I_4$ and $I_5$ in the proof of Theorem \ref{thm.wsu4}, 
\begin{align*}
  I_{11} &= \sum_{i=1}^n\int_\Omega(r_i(u)-r_i(v))
  \bigg(\frac{\pa\bar h_\eps}{\pa u_i}(\bar u)
  - \frac{\pa\bar h_\eps}{\pa u_i}(\bar v)\bigg)dx \\
  &= \sum_{i=2}^n\int_\Omega(r_i(u)-r_i(v))
  \bigg(\frac{\pa\bar h_\eps}{\pa u_i}(\bar u)
  - \frac{\pa\bar h_\eps}{\pa u_i}(\bar v)\bigg)dx \\
  &\phantom{xx}+ \int_\Omega(r_1(u)-r_1(v))
  (h_1'(u_1)-h_1'(v_1))dx =: I_{111} + I_{112}, \\
  I_{12} &= \sum_{i=1}^n\int_\Omega r_i(v)\bigg(
  \frac{\pa\bar h_\eps}{\pa u_i}(\bar u)
  - \frac{\pa\bar h_\eps}{\pa u_i}(\bar v)
  - \sum_{k=2}^n\frac{\pa^2\bar h_\eps}{\pa u_i\pa u_k}(\bar v)
  (u_k-v_k)\bigg)dx \\
  &= \sum_{i=2}^n\int_\Omega r_i(v)\bigg(
  \frac{\pa\bar h_\eps}{\pa u_i}(\bar u)
  - \frac{\pa\bar h_\eps}{\pa u_i}(\bar v)
  - \sum_{k=2}^n\frac{\pa^2´\bar h_\eps}{\pa u_i\pa u_k}(\bar v)
  (u_k-v_k)\bigg)dx \\
  &\phantom{xx}+ \int_\Omega r_1(v)
  \bigg(h_1'(u_1) - h_1'(v_1) + (m_1-1)v_1^{m_1-2}
  \sum_{k=2}^n(u_k-v_k)\bigg)dx \\
  &=: I_{121} + I_{122}.
\end{align*}
The integrals $I_{111}$ and $I_{121}$ are estimated as in Theorem \ref{thm.wsu4}. We use Assumption (A6)$_3$ for the term $I_{112}$: 
\begin{align*}
  I_{112} \le C\sum_{i=0}^N\int_\Omega(u_i-v_i)^2dx.
\end{align*}
Finally, by a Taylor expansion,
\begin{align*}
  I_{122} &= -\int_\Omega r_1(v)
  \big(h_1'(u_1) - h_1'(v_1) - (m_1-1)v_1^{m_1-2}(u_1-v_1)\big)dx \\
  &\le C\int_\Omega |u_1-v_1|^2 dx 
  \le C\sum_{i=2}^n\int_\Omega(u_i-v_i)^2 dx. 
\end{align*}
From this point on, we can argue as before, finishing the proof.


\section{Examples}\label{sec.ex} 

We present some examples to which Theorems \ref{thm.wsu1}--\ref{thm.wsu3} can be applied. Sections \ref{sec.BT}--\ref{sec.chemo} concern systems without volume filling, while examples with volume-filling effects are given in Sections \ref{sec.granu}--\ref{sec.chemovf}. 

\subsection{Regularized Busenberg--Travis model}\label{sec.BT}

The regularized Busenberg--Travis equations for the segregating population densities $u_1,\ldots,u_n$ (we set $N=n$) are given by \eqref{1.eq} with the diffusion coefficients 
\begin{align*}
  A_{ij}(u) = a_{i0}\delta_{ij} + a_{ij}u_i
  \quad\mbox{for }i,j=1,\ldots,n,
\end{align*}
where $a_{i0}>0$, $a_{ij}\in\R$ are numbers. The original model in \cite{BuTr83} fulfills $a_{ij}=1$ and does not contain the term $a_{i0}\delta_{ij}$. We consider a more general, regularized model. In particular, we suppose that $a_{i0}>0$ and that $(a_{ij})$ is positive semidefinite. Since $(h''(u)A(u))_{ij} = a_{i0}u_i^{-1}\delta_{ij} + a_{ij}$, this implies the positive definiteness of $h''(u)A(u)$, verifying Assumption (A5)$_1$. Thus, Theorem \ref{thm.wsu1} yields the weak--strong uniqueness property. 

The global existence of a weak solution to this system with initial and boundary conditions \eqref{1.bic} was proved in \cite[Appendix B]{JPZ22}. Compared to \cite{LaMa23}, we are able to relax the conditions imposed on the strong solution $v$ and to extend the result to an arbitrary number $n\geq 2$ of species. The boundedness of a weak solution to the two-species model was proved in \cite{LaMa23a}. It is an open problem whether the model for more than two species generally admits a bounded weak solution.

\subsection{Shigesada--Kawasaki--Teramoto (SKT) model}

The SKT equations also describe segregating population species \cite{SKT79}. They are given by \eqref{1.eq} with the diffusion coefficients
\begin{align*}
  A_{ij}(u) = \delta_{ij}\bigg(a_{i0} + \sum_{k=1}^n a_{ik}u_k\bigg)
  + a_{ij}u_i \quad\mbox{for } i,j=1,\ldots,n,
\end{align*}
with $N=n$, $a_{ij}\ge 0$, $a_{i0}>0$, and $(a_{ij})$ is positive definite. The computation
\begin{align*}
  z^Th''(u)A(u)z = \sum_{i=1}^n
  \bigg(a_{i0}+\sum_{k=1}^n a_{ik}u_k\bigg)\frac{z_i^2}{u_i}
  + \sum_{i,j=1}^n a_{ij}z_iz_j
\end{align*}
for $z\in\R^n$ shows that $h''(u)A(u)$ is positive definite; also see \cite[Lemma 4]{CDJ18}. It is shown in \cite{CDJ18} that there exists a global weak solution. If this solution is bounded, Theorem \ref{thm.wsu1} applies, and we obtain the weak--strong uniqueness property. The boundedness of weak solutions can be proved for certain choices of the coefficients $a_{ij}$ in the two-species SKT model \cite{JuZa16}. 

If $n=2$ and the $L^\infty(\Omega)$ norm of $u$ is smaller than a certain constant depending on the diffusion coefficients, the uniqueness of weak solutions was proved in \cite[Theorem 2.1]{BMM25}. Compared to \cite{ChJu19}, we are able to relax the condition imposed on the strong solution. The local-in-time existence of smooth solutions to the two-species model (in particular $v_i\in C^0([0,T_{\rm max});W^{1,p}(\Omega))$ with $p>d$) was shown in \cite{HNP15}.

\subsection{Semiconductor model with electron--hole scattering}\label{sec.semicond}

The transport of carriers (electrons and holes) in semiconductors with electron--hole scattering can be modeled by drift--diffusion equations for the electron and hole densities $u_1$ and $u_2$, respectively. To simplify, we focus on the diffusion part and neglect the drift part due to the electric field. Then the model is given by equations \eqref{1.eq} with the diffusion matrix $A(u)=(A_{ij})$ with
\begin{align*}
  A_{11} = \frac{\mu_1(1+\mu_2 u_1)}{m(u)}, \quad
  A_{12} = \beta\frac{\mu_1\mu_2u_1}{m(u)}, \quad
  A_{21} = \beta\frac{\mu_1\mu_2u_2}{m(u)}, \quad
  A_{22} = \frac{\mu_2(1+\mu_1 u_2)}{m(u)}.
\end{align*}
where $\mu_1$, $\mu_2>0$, $\beta\in[0,1]$, and 
\begin{align*}
  m(u) = 1 + \mu_2u_1 + \mu_2u_1 + (1-\beta^2)\mu_1\mu_2u_1u_2.
\end{align*}
A global existence analysis for the case $\beta=1$ was presented in \cite{ChJu07}. We compute (with $N=n=2$)
\begin{align*}
  z^Th''(u)A(u)z 
  &= \frac{\mu_1}{m(u)}\bigg(\frac{1}{u_1}+\mu_2\bigg)z_1^2
  + \frac{\mu_2}{m(u)}\bigg(\frac{1}{u_2}+\mu_1\bigg)z_2^2
  + 2\beta\frac{\mu_1\mu_2}{m(u)}z_1z_2 \\
  &\ge \sum_{i=1}^2\frac{\mu_i}{m(u)}z_i^2
  + \beta\frac{\mu_1\mu_2}{m(u)}(z_1+z_2)^2 
  \ge c\sum_{i=1}^2 z_i^2,
\end{align*}
for all $z\in\R^2$ and $u\in(0,L)^2$ for some $L>0$. Thus, Assumption (A5)$_1$ is satisfied, and Theorem \ref{thm.wsu1} applies. The proof of the boundedness of the weak solution is an open problem. 

\subsection{Chemotaxis model with additional cross-diffusion}
\label{sec.chemo}

The Keller--Segel equations model the chemotactic dynamics of the cell density $u_1$ of bacteria driven by the gradient of the chemical concentration $u_2$. The nonlocal chemical interaction may dominate diffusion resulting in the blow-up of the cell density. The blow-up can be avoided by adding an additional cross-diffusion term with parameter $\delta>0$, leading to a global existence result \cite{HiJu11}. This gives the equations
\begin{align*}
  \pa_t u_1 = \diver(\na u_1-u_1\na u_2), \quad
  \pa_t u_2 = \Delta u_1 + \delta\Delta u_2 = u_1-u_2.
\end{align*}
The diffusion matrix equals
\begin{align*}
  A(u) = \begin{pmatrix} 1 & -u_1 \\ \delta & 1 \end{pmatrix}.
\end{align*}
The model corresponds to equations \eqref{1.eq} with $n=1$ and $N=2$, and the entropy density is given by
\begin{align*}
  h(u) = u_1(\log u_1-1) + \frac{u_2^2}{2\delta}
  \quad\mbox{for }u_1,u_2\ge 0.
\end{align*}
Under the condition that $u_1$ is bounded, we find that
\begin{align*}
  z^Th''(u)A(u)z = \frac{z_1^2}{u_1} + \frac{z_2^2}{\delta}
  \ge c_A|z|^2 \quad\mbox{for }z\in\R^2,
\end{align*}
where $c_A=\min\{1/\max|u_2|,1/\delta\}>0$. Thus, Assumption (A5)$_1$ holds, and Theorem \ref{thm.wsu1} yields the weak--strong uniqueness property. We infer from Amann's result \cite{Ama93}, for sufficiently smooth positive initial data, that there exists a local-in-time strong solution.

\subsection{Granular segregation model}\label{sec.granu}

Granular materials of different components may segregate under external agitation. Denoting by $u_1$ the difference of two densities and by $u_2$ the dynamic angle between the flat free surface of the mixture and the horizontal axis, the dynamics is governed by equations \eqref{1.eq} with the diffusion matrix  
\begin{align*}
  \bar A(\bar u) = \begin{pmatrix} 
  \nu & -(1-u_1^2) \\ \gamma & 1 \end{pmatrix},
\end{align*}
where $\nu>0$ and $\gamma>0$ are related to Fickian diffusivities. We refer to \cite{GJV03} for the proof of the global existence of a weak solution. In this example, we choose the entropy density
\begin{align*}
  \bar h(\bar u) = \frac12(1-u_1)(\log(1-u_1)-1)
  + \frac12(1+u_1)(\log(1+u_1)-1) + \frac{u_2^2}{2\gamma},
\end{align*}
where $u_1\in(-1,1)$ and $u_2\in\R$. The model is already formulated in the reduced variable $\bar u = (u_1,u_2)$, and the original variables are $v_1=\frac12(1-u_1)$, $v_2=1-v_1=\frac12(1+u_1)$, and $v_3=u_2$. Therefore, we have $n=2$ and $N=3$. The matrix
\begin{align*}
  \bar h''(\bar u)\bar A(\bar u)
  = \begin{pmatrix}
  \nu(1-u_1^2)^{-1} & -1 \\ 1 & 1/\gamma \end{pmatrix}
\end{align*}
is positive definite, $\bar z^T\bar h''(\bar u)\bar A(\bar u)\bar z \ge \min\{\nu,1/\gamma\}|\bar z|^2$ for $\bar z\in\R^2$. According to Lemma \ref{lem.bar}, also the matrix $h''(v)A(v)$ is positive definite uniformly in $\mathcal{D}= \{v\in(0,1)^2\times(0,L):v_1+v_2\le 1\}$. Therefore, we deduce from Theorem \ref{thm.wsu2} the weak--strong uniqueness property. 

\subsection{Maxwell--Stefan model for gas mixtures}

The dynamics of gaseous mixtures can be modeled by the equations 
\begin{align*}
  \pa_t u_i + \diver J_i = 0, \quad
  \na u_i = \sum_{j=1}^n d_{ij}(u_jJ_i-u_iJ_j),
\end{align*}
where $u_i$ are the volume fractions of the gas components, $J_i$ are the associated fluxes, and the diffusion coefficients $d_{ij}$ are symmetric. The existence of a bounded weak solution satisfying $\sum_{i=1}^n u_i=1$ was established in \cite{JuSt13}. The system can be formulated in the form \eqref{1.eq} with $n=N$ and the diffusion coefficients $A_{ij}(u)=\sqrt{u_i/u_j}D_{ij}^{BD}(u)$, where $D^{BD}(u)$ is the Bott--Duffin inverse of $D(u)$ with the coefficients
\begin{align*}
  D_{ij}(u) = \delta_{ij}\sum_{k\neq i}d_{ik}u_k - d_{ij}\sqrt{u_iu_j}
  \quad\mbox{for }i,j=1,\ldots,n;
\end{align*}
see, e.g., \cite[Sec.~2]{HJT22}. It is shown in \cite[Lemma 4]{HJT22} that $D^{BD}(u)$ is hypocoercive in the sense that there exists $c>0$ such that for all $z\in\R^n$ and $u\in(0,1)^n$ with $\sum_{i=1}^n u_i=1$,
\begin{align*}
  z^TD^{BD}(u)z \ge c|\Pi z|^2,
\end{align*}
where $\Pi$ is the projection on $\{z\in\R^n: \sum_{i=1}^n \sqrt{u_i}z_i=0\}$, given by $\Pi_{ij}=\delta_{ij}-\sqrt{u_iu_j}$. It follows that
\begin{align*}
  z^Th''(u)A(u)z &= \sum_{i,j=1}^n D_{ij}^{BD}(u)\frac{z_i}{\sqrt{u_i}}
  \frac{z_j}{\sqrt{u_j}}
  \ge c\sum_{i=1}^n\bigg(\frac{z_i}{\sqrt{u_i}}
  - \sum_{j=1}^n\sqrt{u_i}z_j\bigg)^2 \\
  &= c\sum_{i=1}^n \frac{z_i^2}{u_i} - c\bigg(\sum_{i=1}^n u_i\bigg)
  \bigg(\sum_{j=1}^nz_j\bigg)^2
  \ge c\sum_{i=1}^n z_i^2 - c\bigg(\sum_{j=1}^nz_j\bigg)^2,
\end{align*}
proving Assumption (A5)$_2$. Hence, Theorem \ref{thm.wsu2} applies, and the weak--strong uniqueness property follows.

\subsection{Solar-cell thin-film model} 

Thin films for solar cells can be prepared by physical vapor deposition on a flat substrate. The diffusion of the metallic species in the bulk can be described by equations \eqref{1.eq} with the diffusion coefficients
\begin{align*}
  A_{ij}(u) = \delta_{ij}\sum_{k=1}^n a_{ik}u_k - a_{ij}u_i \quad\mbox{for }i,j=1,\ldots,n,
\end{align*}
where $a_{ij}=a_{ji}>0$. Set $a_*=\min_{i,j=1,\ldots,n}a_{ij}$. The model satisfies $N=n$ and the volume filling-constraint $\sum_{i=1}^n u_i=1$. The existence of a bounded weak solutions was shown in \cite{BaEh18}. The computation
\begin{align*}
  z^Th''(u)A(u)z &= \sum_{i=1}^n\bigg(\sum_{j=1}^n a_{ij}u_j\bigg)
  \frac{z_i^2}{u_i} - \sum_{i,j=1}^n a_{ij}z_iz_j
  = \frac12\sum_{i,j=1}^n a_{ij}u_iu_j\bigg(\frac{z_i}{u_i}
  - \frac{z_j}{u_j}\bigg)^2 \\
  &\ge \frac{a_*}{2}\sum_{i,j=1}^n u_iu_j\bigg(\frac{z_i}{u_i}
  - \frac{z_j}{u_j}\bigg)^2 
  = a_*\bigg\{\sum_{i=1}^n\frac{z_i^2}{u_i} 
  - \sum_{i=1}^nz_i\sum_{j=1}^n z_j\bigg\} \\
  &\ge a_*\bigg\{\sum_{i=1}^n z_i^2 
  - \bigg(\sum_{i=1}^n z_i\bigg)^2\bigg\}
\end{align*}
for $z\in\R^n$ and $u\in(0,1)^n$ with $\sum_{i=1}^n u_i=1$. This proves Assumption (A5)$_2$. By Theorem \ref{thm.wsu2}, the weak--strong uniqueness property holds. 

\subsection{Multiphase model}

The interaction between different components of a cellular fluid, like various cell types, extracellular matrix, enzymes, and water, can be described by equations \eqref{1.eq} for the volume fraction $u_i$ of the $i$th phase with the diffusion coefficients
\begin{align*}
  A_{ij}(u) = \delta_{ij}\sum_{k=1}^n a_{ik}u_k + a_{ij}u_i
  - 2u_i\sum_{k=1}^n a_{jk}u_k
  \quad\mbox{for }i,j=1,\ldots,n,
\end{align*}
where $a_{ij}\ge 0$ and $(a_{ij})$ is positive definite with smallest eigenvalue $\alpha>0$ \cite{JRX26}. We have $n=N$ and $\sum_{i=1}^n u_i=1$. The existence of a bounded weak solution was proved in \cite{JRX26}. Setting $a_i(u)=\sum_{j=1}^n a_{ij}u_j$, we obtain for $z\in\R$ and $u\in(0,1)^n$ with $\sum_{i=1}^n u_i=1$:
\begin{align*}
  z^Th''(u)A(u)z &= \sum_{i=1}^n\frac{a_i(u)}{u_i}z_i^2
  + \sum_{i,j=1}^n a_{ij}z_iz_j 
  - 2\sum_{i,j=1}^n a_j(u)z_iz_j \\
  &\ge \sum_{i=1}^n \frac{a_i(u)}{u_i}z_i^2 
  + \alpha\sum_{i=1}^n z_i^2
  - 2\bigg(\sum_{i=1}^n z_i\bigg)\bigg(\sum_{j=1}^n a_j(u)z_j\bigg) \\
  &\ge \alpha\sum_{i=1}^n z_i^2
  - \eta\bigg(\sum_{j=1}^n a_j(u)z_j\bigg)^2
  - \frac{1}{\eta}\bigg(\sum_{i=1}^n z_i\bigg)^2 \\
  &\ge \sum_{i=1}^n\Big(\alpha
  - \eta C\max_{k=1,\ldots,n}a_{ik}^2\Big)z_i^2
  - \frac{1}{\eta}\bigg(\sum_{i=1}^n z_i\bigg)^2 \\
  &\ge \lambda\sum_{i=1}^n z_i^2 - \frac{1}{\eta}\bigg(\sum_{i=1}^n z_i\bigg)^2,
\end{align*}
where we used Young's inequality with parameter $\eta>0$ and we have chosen $\eta>0$ sufficiently small such that $\lambda:=\alpha/2\le \alpha - \eta C\max_{i,k=1,\ldots,n}a_{ik}^2$. Thus, we have verified Assumption (A5)$_2$ with $\kappa:=1/\eta$. The weak--strong uniqueness property follows from Theorem \ref{thm.wsu2}.

\subsection{Multispecies chemotaxis model with volume filling}
\label{sec.chemovf}

We consider a chemotaxis-driven multiphase multispecies diffusion system, which can be derived within a multiphase framework based on mass and force balance laws, similarly to \cite{GJZ26}. The equations for the volume fractions $u_1,\ldots,u_n$ of the cellular components are given by
\begin{align}\label{4.chemo1}
  & \pa_t u_i = \diver\bigg(J_i - u_i\sum_{k=1}^n J_k\bigg), \\
  & J_i = f_i(u_i)\na u_i + \na(u_iq_i(c))
  + u_i\sum_{k=1}^g \chi_{ik}\na c_k, 
  \quad i=1,\ldots,n, \label{4.chemo2}
\end{align}
where the concentrations $c_j$ of the chemical signals are determined by
\begin{align}\label{4.chemo3}
  \pa_t c_j = D_j\Delta c_j - \lambda_jc_j 
  + \sum_{k=1}^n \mu_{jk}u_k, \quad j=1,\ldots,g.
\end{align}
The parameters $\chi_{ik}\in\R$ model the sensitivity of the cellular components modeling direct aggregation or repulsion of the $i$th cell species, $\mu_{jk}\ge 0$ is the cell production rate, $D_j>0$ is the chemical diffusion coefficient, and $\lambda_j>0$ is the signal degradation rate. The model is similar to that one in \cite{GJZ26}; the main difference is that our model is non-degenerate. The volume fractions satisfy the constraint $\sum_{i=1}^n u_i=1$. The entropy density is given by \eqref{1.h} with $b_{ij}=K>0$. The existence of a bounded weak solution and the weak--strong uniqueness property can be shown similarly as in \cite{GJZ26}. Here, we improve the latter result by allowing for a weaker notion of strong solution. 

We wish to apply Theorem \ref{thm.wsu3}. For this, we set $N=n+g$ and $v=(u,c)$. The diffusion matrix reads as
\begin{align*}
  A(u) = \begin{pmatrix}
  A^{11}(v) & A^{12}(v) \\ 0 & A^{22}(v)
  \end{pmatrix},
\end{align*}
where 
\begin{align*}
  A^{11}_{ik}(v) &= (q_i(c) + f_i(u_i))\delta_{ik}
  - (q_k(c)+f_k(u_k))u_k, \\
  A^{12}_{ij}(v) &= u_i\frac{\pa q_i}{\pa c_j}(c) + \chi_{ij}u_i
  - u_i\sum_{k=1}^n u_k\bigg(\frac{\pa q_k}{\pa c_j}(c)
  + \chi_{kj}\bigg), \\
  A^{22}_{j\ell}(v) &= D_\ell\delta_{j\ell}
  \quad\mbox{for }i,k=1,\ldots,n,\ j,\ell=1,\ldots,g.
\end{align*}
Thus, we can formulate system \eqref{4.chemo1}--\eqref{4.chemo3} as \eqref{1.eq} with the reaction rates
\begin{align*}
  r_i(v) &= 0\quad\mbox{for }i=1,\ldots,n, \\
  r_{n+j}(v) &= -\lambda_j v_{n+j} + \sum_{k=1}^n\mu_{jk}v_{n+k}
  \quad\mbox{for }j=1,\ldots,g
\end{align*}
and the volume-filling constraint $\sum_{i=1}^n v_i=1$. 

To show the requirements of Theorem \ref{thm.wsu3}, we assume that 
\begin{align*}
  q_i(c)+f_i(u_i)\ge c>0 \quad\mbox{for }i=2,\ldots,n, \quad
  q_1(c)+f_1(u_1)\ge 0,
\end{align*}
$f_i$ is continuous, $q_i$ is continuously differentiable, and $f_i$, $q_i$, and $q_i'$ are bounded for $i=1,\ldots,n$. The variable $u_1$ may represent the volume fraction of water, which may not undergo diffusion. Consequently, it is possible to have $f_1=q_1=0$, leading to a lack of coercivity in this variable. 

\begin{lemma}
If $K>0$ is sufficiently large, Assumptions (A3), (A4), (A5)$_3$, and (A6) hold.
\end{lemma}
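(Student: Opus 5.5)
We verify the assumptions one at a time. The hard one is the hypocoercivity inequality (A5)$_3$, which is what forces $K$ to be large.

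\emph{Structural conditions (A3), (A4), (A6)$_3$.}

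For (A3), we split $A^{11}$ into a diagonal part $a_i(v)=q_i(c)+f_i(u_i)$ and the remainder $a_{ik}(v)=-(q_k(c)+f_k(u_k))u_k$. For $i\ge 2$ the diagonal part satisfies $a_i\ge c>0$; for $i=1$ we move $a_1$ into the off-diagonal part. The off-diagonal entries in rows $i=1,\ldots,n$ must carry a factor $u_i$ to satisfy $|a_{ij}|\le C_Au_i^{2-m_i}$ with $m_i=1$. This already holds for $A^{12}$. For $A^{11}$ it does not, since the remainder carries the factor $u_k$ rather than $u_i$, so we rewrite it using the volume-filling constraint. Concretely, we write the flux of \eqref{4.chemo1} as
\[
J_i-u_i\sum_kJ_k=\sum_k u_k J_i-u_i\sum_k J_k=\sum_k (u_kJ_i-u_iJ_k).
\]
Because $\na u_i$ enters $J_i$ linearly, the resulting diffusion matrix has the required form modulo the constraint. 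The Lipschitz continuity of all coefficients follows from boundedness and smoothness of $f_i$, $q_i$, $q_i'$; here we assume $f_i$ is Lipschitz.

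The reactions $r$ are linear, so (A4) is immediate. For (A6)$_3$ we have $r_1=\cdots=r_n=0$, hence $\sum_{i\le n}r_i=0$, $r_1=0$, and the one-sided inequality is trivial.

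\emph{Column sums and the $u_1^{m_1-2}A_{1j}$ condition.}

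Summing the column of $A^{11}$ gives
\[
\sum_i A^{11}_{ik}=a_k-a_k\sum_i u_i=0
\]
on $\sum_iu_i=1$, and similarly for $A^{12}$. For the Lipschitz continuity of $u_1^{-1}A_{1j}$, we use that every off-diagonal entry in row $1$ carries the factor $u_1$, namely $u_1(\pa_jq_1+\chi_{1j}-\dots)$. The diagonal entry $a_1(1-u_1)$ is the problem: it is not $O(u_1)$. We therefore need the reformulated flux, where row $1$ reads $\sum_k(u_kJ_1-u_1J_k)$. In that form the $\na u_1$ coefficient is $u_k a_1$, and $a_1$ must be handled as degenerate. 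This is consistent with (A5)$_3$ allowing no coercivity in $z_1$. I expect some bookkeeping here: one may have to use $\na u_1=-\sum_{k\ge2}\na u_k$ to eliminate the troublesome diagonal entry.

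\emph{Main step: (A5)$_3$.}

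We compute $z^Th''(v)A(v)z$ with $h''=\operatorname{diag}(1/u_1,\dots,1/u_n,K I_g)$. This gives
\[
\sum_{i\le n}\frac{a_i}{u_i}z_i^2 \;-\;\Big(\sum_i z_i\Big)\Big(\sum_k a_kz_k\Big)\;+\;\sum_{i\le n,\,j}\frac{A^{12}_{ij}}{u_i}\,z_iz_{n+j}\;+\;K\sum_j D_jz_{n+j}^2 .
\]
We estimate the terms as follows.

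The first term is bounded below by $c\sum_{i\ge2}z_i^2$, using $a_i\ge c$ and $u_i\le1$, and dropping $a_1\ge0$.

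The second term is handled by Young's inequality:
\[
-\Big(\sum_i z_i\Big)\Big(\sum_k a_kz_k\Big)\ge -\eta\Big(\sum_k a_kz_k\Big)^2-\eta^{-1}\Big(\sum_i z_i\Big)^2 .
\]
However, $a_1z_1$ cannot be absorbed because there is no $z_1^2$ control. The fix is to use $a_1\le C$ together with $z_1=\sum_iz_i-\sum_{i\ge2}z_i$. This bounds $(a_1z_1)^2$ by $C(\sum_iz_i)^2+C\sum_{i\ge2}z_i^2$, and the first piece is absorbed into the $\kappa$-term.

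For the cross term, $A^{12}_{ij}/u_i$ is bounded, so it is at most $C\sum_{i\le n}|z_i||z_{n+j}|$. Again we replace $|z_1|$ by $|\sum_iz_i|+\sum_{i\ge2}|z_i|$. Young's inequality then gives
\[
\tfrac c4\sum_{i\ge2}z_i^2+(\sum_iz_i)^2+C'\sum_jz_{n+j}^2 .
\]
Finally we choose $K$ with $K\min_j D_j\ge C'+\lambda$. This yields (A5)$_3$ with $\kappa$ depending on $\eta$.

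The main obstacle is the degenerate first component, both in the rewriting needed for (A3) and for the $u_1^{-1}A_{1j}$ condition. The key device throughout is to trade $z_1$ for $\sum_iz_i$ and absorb it into the $\kappa$-term.
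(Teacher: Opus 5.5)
Your verification of the inequality in (A5)$_3$ follows the paper's argument. You bound $\sum_{i\le n}a_iz_i^2/u_i$ below by $c\sum_{i\ge2}z_i^2$, apply Young to $-(\sum_iz_i)(\sum_ka_kz_k)$ and to the $z_iz_{n+j}$ cross terms after substituting $z_1=\sum_{i\le n}z_i-\sum_{i=2}^nz_i$, and take $K$ large. Your treatment of (A4) and (A6)$_3$ is also the same as the paper's. The flux rewriting in your (A3) paragraph is unnecessary. Expanding $J_i-u_i\sum_kJ_k$ directly gives $A^{11}_{ik}=a_i\delta_{ik}-u_ia_k$. The factor $u_k$ in the displayed $A^{11}$ is a misprint, and your own column-sum and quadratic-form computations already use $u_i$. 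So every off-diagonal entry in row $i\le n$ already carries the factor $u_i$.

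The genuine gap is the step you leave open: the Lipschitz continuity of $u_1^{-1}A_{1j}$ required in (A5)$_3$, together with your treatment of $i=1$ in (A3). No bookkeeping can close it. Since only $\sum_{i\le n}\nabla u_i=0$ is available, $A$ can be changed only to $A+we^T$, where $e\in\R^N$ has $e_i=1$ for $i\le n$ and $e_i=0$ otherwise. This change leaves $A_{11}-A_{1j}=a_1+u_1(a_j-a_1)$, $2\le j\le n$, invariant. If all $u_1^{-1}A_{1j}$ were bounded, $a_1=q_1(c)+f_1(u_1)$ would therefore have to be $O(u_1)$. That fails under the stated hypothesis $q_1+f_1\ge0$; take, e.g., $q_1\equiv1$.

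The same obstruction hits (A3). Moving $a_1$ off the diagonal violates $|a_{11}|\le C_Au_1$, and keeping it on the diagonal violates $a_1\ge c_A$. In fact, (A3) at $i=1$ gives $A_{11}\ge c_A-C_Au_1$, while the Lipschitz condition gives $|A_{11}|\le Cu_1$, so the two are incompatible as literally stated.

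What is needed is an extra hypothesis:
\begin{itemize}
\item $q_1\equiv0$ with $f_1(s)/s$ Lipschitz, e.g.\ $q_1=f_1=0$, the water case the paper mentions;
\item the lower bound in (A3) imposed only for $i\ge2$ (the proof of Theorem 3 only uses it there);
\item $f_i$ and $q_i'$ Lipschitz, not merely continuous or bounded.
\end{itemize}
The paper's own proof does not check this part either: it verifies only the quadratic-form inequality and asserts $a_i\ge c$ for all $i$. So the defect lies in the statement. Still, as written, your proposal does not establish (A5)$_3$.
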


\begin{proof}
Assumption (A3) is an immediate consequence of the definition of $A^{k\ell}_{ij}(v)$ with $a_i(v)=q_i(c)+f_i(u_i)\ge c>0$ and $a_{ij}(v)$ contains the factor $v_i$ for $i=1,\ldots,n$ such that $m_i=1$. Moreover, the linearity of $r_j$ for $j=n+1,\ldots,N$ ensures the validity of Assumption (A4), and Assumption (A6) follows directly from $r_1=0$.

To verify Assumption (A5)$_3$, let $z\in\R^N$. We infer from the bounds for $f_i$ and $q_i$ and Young's inequality with $\eta_1>0$ that
\begin{align*}
  z^Th''(u)A(u)z &= \sum_{i=1}^n(q_i(c)+f_i(v_i))\frac{z_i^2}{v_i}
  + \sum_{i=1}^n\sum_{j=n+1}^N\bigg(\frac{\pa q_i}{\pa v_j}(c)
  + \chi_{i,j-n}\bigg)z_iz_j \\
  &\phantom{xx}- \bigg(\sum_{i=1}^n z_i\bigg)
  \sum_{i=1}^n\bigg\{(q_i(c)+f_i(v_i))z_i
  + \sum_{j=n+1}^N v_i\bigg(\frac{\pa q_i}{\pa v_j}(c) 
  + \chi_{i,j-n}\bigg)z_j\bigg\} \\
  &\phantom{xx}+ K\sum_{j=n+1}^N D_{j-n}z_j^2 \\
  &\ge (c - \eta_1)\sum_{i=2}^n z_i^2 
  - C(\eta_1)\bigg(\sum_{i=1}^n z_i\bigg)^2
  - C\sum_{i=1}^n\sum_{j=n+1}^N|z_iz_j| + KD_*\sum_{j=n+1}^Nz_j^2,
\end{align*}
where $D_*=\min_{j=1,\ldots,g}D_j>0$. We estimate the mixed term:
\begin{align*}
  \sum_{i=1}^n\sum_{j=n+1}^N|z_iz_j|
  &= \sum_{i=2}^n\sum_{j=n+1}^N|z_iz_j| 
  + |z_1|\sum_{j=n+1}^n|z_j| \\
  &= \sum_{i=2}^n\sum_{j=n+1}^N|z_iz_j|
  + \bigg|\sum_{i=1}^nz_i - \sum_{i=2}^n z_i\bigg|\sum_{j=n+1}^n|z_j| \\
  &\le \eta_2\sum_{i=2}^nz_i^2 + C(\eta_2)\sum_{j=n+1}^N z_j^2
  + \bigg|\sum_{i=1}^n z_i\bigg|\sum_{j=n+1}^n|z_j|
  + \bigg|\sum_{i=2}^n z_i\bigg|\sum_{j=n+1}^n|z_j| \\
  &\le 2\eta_2\sum_{i=2}^nz_i^2
  + C(\eta_2)\sum_{j=n+1}^N z_j^2 
  + C(\eta_2)\bigg|\sum_{i=1}^n z_i\bigg|^2,
\end{align*}
where $\eta_2>0$. This shows that
\begin{align*}
  z^Th''(u)A(u)z &\ge (c-\eta_1-2\eta_2)\sum_{i=2}^nz_i^2
  + (KD_*-C(\eta_2))\sum_{j=n+1}^N z_j^2 \\
  &\phantom{xx}- (C(\eta_1)+C(\eta_2))\bigg(\sum_{i=1}^n z_i\bigg)^2.
\end{align*}
Thus, choosing $\eta_1>0$ and $\eta_2>0$ sufficiently small and $K>0$ sufficiently large, we find that
\begin{align*}
  z^Th''(u)A(u)z \ge \frac{c}{2}\sum_{i=2}^N z_i^2 
  - (C(\eta_1)+C(\eta_2))\bigg(\sum_{i=1}^n z_i\bigg)^2.
\end{align*}
This shows Assumption (A5)$_3$ with $\lambda := c/2$ and $\kappa := C(\eta_1)+C(\eta_2)$, completing the proof.
\end{proof}


\end{document}